\DeclareSymbolFont{AMSb}{U}{msb}{m}{n}
\definecolor{darkblue}{rgb}{0,0,0.6}
\DeclarePairedDelimiter{\abs}{\lvert}{\rvert}
\DeclarePairedDelimiter{\norm}{\lVert}{\rVert}
\DeclarePairedDelimiter{\bra}{(}{)}
\DeclarePairedDelimiter{\pra}{[}{]}
\DeclarePairedDelimiter{\set}{\{}{\}}
\DeclarePairedDelimiter{\skp}{\langle}{\rangle}
\newcommand{\dx}{\textup{d}}
\newcommand{\CI}{{\textup{CI}}}
\newcommand{\PI}{{\textup{PI}}}
\newcommand{\eps}{\varepsilon}
\newcommand{\magmu}{\ensuremath{\boldsymbol{\mu}\mspace{3mu}}}
\newcommand{\me}{\ensuremath{\mathrm{e}}}
\newcommand{\one}{\mathds{1}}
\newcommand{\dsOne}{\mathds{1}}
\newcommand{\N}{\mathds{N}}
\newcommand{\R}{\mathds{R}}
\newcommand{\Z}{\mathds{Z}}
\newcommand{\cA}{\ensuremath{\mathcal A}}
\newcommand{\cB}{\ensuremath{\mathcal B}}
\newcommand{\cE}{\ensuremath{\mathcal E}}
\newcommand{\cF}{\ensuremath{\mathcal F}}
\newcommand{\cG}{\ensuremath{\mathcal G}}
\newcommand{\cM}{\ensuremath{\mathcal M}}
\newcommand{\cN}{\ensuremath{\mathcal N}}
\newcommand{\cP}{\ensuremath{\mathcal P}}
\newcommand{\cR}{\ensuremath{\mathcal R}}
\newcommand{\cS}{\ensuremath{\mathcal S}}
\newcommand{\cU}{\ensuremath{\mathcal U}}
\newcommand{\bfA}{\ensuremath{\bm A}}
\newcommand{\bfa}{\ensuremath{\bm a}}
\newcommand{\bfB}{\ensuremath{\bm B}}
\newcommand{\bfm}{\ensuremath{\bm m}}
\newcommand{\bfM}{\ensuremath{\bm M}}
\newcommand{\bfr}{\ensuremath{\bm r}}
\newcommand{\bfx}{\ensuremath{\bm x}}
\newcommand{\bfy}{\ensuremath{\bm y}}
\newcommand{\bfz}{\ensuremath{\bm z}}
\newcommand{\order}{\ensuremath{\boldsymbol{\rho}}}
\newcommand{\EX}[1][E]{\ensuremath {\mathds{#1}}}
\newcommand{\valley}[1][V]{\ensuremath {\mathcal{#1}}}
\DeclareMathOperator{\capacity}{cap}
\DeclareMathOperator{\bcapacity}{\mathbf{cap}}
\DeclareMathOperator{\nBer}{nBer}
\DeclareMathOperator{\Cheeger}{Cheeger}
\DeclareMathOperator{\supp}{supp}
\DeclareMathOperator{\mean}{\mathds{E}}
\DeclareMathOperator{\Mean}{\mathrm{E}}
\DeclareMathOperator{\prob}{\mathds{P}\mspace{-1mu}}
\DeclareMathOperator{\var}{Var}
\DeclareMathOperator{\Ent}{Ent}
\DeclareMathOperator{\LSI}{LSI}
\DeclareMathOperator{\BL}{BL}
\newcommand{\ldef}{\ensuremath{\mathrel{\mathop:}=}}
\newcommand{\rdef}{\ensuremath{=\mathrel{\mathop:}}}
\newcommand{\overbar}[1]{\mkern 1.5mu\overline{\mkern-1.5mu#1\mkern-1.5mu}\mkern 1.5mu}
\newtheorem{thm}{Theorem}[section]
\newtheorem{cor}[thm]{Corollary}
\newtheorem{lem}[thm]{Lemma}
\newtheorem{prop}[thm]{Proposition}
\newtheorem{assume}[thm]{Assumption}
\theoremstyle{definition}
\newtheorem{defn}[thm]{Definition}
\newtheorem{ex}[thm]{Example}
\theoremstyle{remark}
\newtheorem{rem}[thm]{Remark}
\numberwithin{equation}{section}
\begin{document}

\title[Poincar\'e and log-Sobolev constants for metastable Markov chains]{Poincar\'e and logarithmic Sobolev constants for metastable Markov chains via capacitary inequalities}

\author{André Schlichting}
\address{Institut f\"ur Angewandte Mathematik, Universit\"at Bonn}
\curraddr{Endenicher Allee 60, 53115 Bonn, Germany}
\email{schlichting@iam.uni-bonn.de}

\author{Martin Slowik}
\address{Institut f\"ur Mathematik, Technische Universit\"{a}t Berlin}
\curraddr{Strasse des 17. Juni 136, 10623 Berlin}
\email{slowik@math.tu-berlin.de}

\subjclass[2010]{%
Primary: 60J10; 
Secondary: %
34L15, 
49J40, 
60J45, 
82C26
}

\keywords{%
capacitary inequality,
harmonic functions,
logarithmic Sobolev constant,
mean hitting time,
metastability,
Poincar\'e constant,
random field Curie-Weiss model
}

\date{\today}

\begin{abstract}
  We investigate the metastable behavior of reversible Markov chains on possibly countable infinite state spaces.  Based on a new definition of metastable Markov processes, we compute precisely the mean transition time between metastable sets.  Under additional size and regularity properties of metastable sets, we establish asymptotic sharp estimates on the Poincar\'e and logarithmic Sobolev constant.  The main ingredient in the proof is a capacitary inequality along the lines of V.~Maz'ya that relates regularity properties of harmonic functions and capacities.  We exemplify the usefulness of this new definition in the context of the random field Curie--Weiss model, where metastability and the additional regularity assumptions are verifiable. 
\end{abstract}

\maketitle

\tableofcontents

\section{Introduction}
%
%
%
%
Metastability is a \emph{dynamical} phenomenon that is characterized by the existence of multiple, well-separated time scales.  Depending on the time scales under consideration, the state space can be decomposed into several disjoint subsets (\emph{metastable partition}) with the property that typical transition times between different subsets are long compared to characteristic mixing times within each subset.

For a rigorous mathematical analysis of metastable Markov processes, various methods have been invented.  The \emph{pathwise approach} \cite{CGOV84, OV05} based on large deviation methods in path space \cite{FW98} has been proven to be robust and somewhat universally applicable.  While it yields detailed information, for example, on the typical exit path, its precision to predict quantities of interest like the mean transition time is, however, limited to logarithmic equivalence.  For reversible systems, the \emph{potential theoretic approach} \cite{BEGK01, BEGK04, BdH15} has been developed to establish sharp estimates on the mean transition time and the low-lying eigenvalues and to prove that the transition times are asymptotically exponential distributed.  A crucial ingredient of this concept is to express probabilistic quantities of interest in terms of capacities and to use variational principles to compute the latter.  For metastable Markov processes in which the expected transition times for a large number of subsets is of the same order, the \emph{martingale approach} \cite{BL15} has recently been developed to identify the limiting process on the time scale of the expected transition times as a Markov process via the solution of a martingale problem.

In the context of Markov processes, there is also a spectral signature of metastability.  Since the transition probabilities between different subsets of the metastable partition are extremely small, an irreducible Markov process exhibiting a metastable behavior can be seen as a perturbation of the reducible version of it in which transitions between different subsets of the metastable partition are forbidden.  For the reducible version, the theorem of Perron--Frobenius implies that the eigenvalue zero of the associated generator is degenerate with multiplicity given by the number of elements in the metastable partition.  In particular, the corresponding eigenfunctions are given as indicator functions on these subsets.  Provided that the perturbation is sufficiently small, the generator of the original process reveals typically a cluster of small eigenvalues that is separated by a gap from the rest of the spectrum.

The main objective of the present work is to extend the potential theoretic approach to derive sharp asymptotics for the spectral gap and the logarithmic Sobolev constants of metastable Markov chains on countable infinite state spaces.
\smallskip

So far sharp estimates of low-lying eigenvalues have been derived in the following settings:
\begin{enumerate}[label=(\roman{enumi})]
\item For a class of reversible Markov processes on discrete state spaces that are strongly recurrent, in the sense that within each set of the metastable partition there is at least one single point that the process visits with overwhelming probability before leaving the corresponding set of the metastable\linebreak partition.  Based on the potential theoretic approach, sharp estimates on the low-lying eigenvalues and the associated eigenfunctions have been obtained under some additional non-degeneracy conditions in \cite{BEGK02}.  Typical examples of strongly recurrent Markov chains are finite-state Markov processes with exponential small transition probabilities \cite{BD16} and models from statistical mechanics under either Glauber or Kawasaki dynamics in finite volume at very low temperature \cite{BdH15, BM02}.
\item For reversible diffusion processes in a potential landscape in $\R^d$ subject to small noise sharp estimates on the low-lying eigenvalues have been obtained in \cite{BGK05, Su95}.  The proof relies on potential theory and a priori regularity estimates of solutions to certain boundary value problems.  Based on hypo-elliptic techniques and a microlocal analysis of the corresponding Witten-complex a complete asymptotic expansion of the lowest eigenvalues was shown in \cite{HKN04}.  Recently, based on methods of optimal transport, an alternative approach to derive a sharp characterization of the Poincar\'e (inverse of the spectral gap) and the logarithmic Sobolev constants has been developed in \cite{MS14, Sch12}.
\end{enumerate}
A common starting point for rigorous mathematical investigations in the settings described above is the identification of a set of \emph{metastable points} that serves as representatives of the sets in the metastable partition.  For strongly recurrent Markov chains the set of metastable points, $\cM$, is chosen in such a way that, for each $m \in \cM$, the probability to escape from $m$ to the remaining metastable points $\cM \setminus \{m\}$ is small compared with the probability to reach $\cM$ starting at some arbitrary point in the state space before returning to it, cf.\ \cite[Definition~8.2]{BdH15}.  In the context of reversible diffusion processes, metastable points are easy to identify and correspond to local minima of the potential landscape.  Since in dimensions $d > 1$ diffusion processes do not hit individual points $x \in \R^d$ in finite time, each metastable point, $m \in \cM$, has to be enlarged (cf.\ \cite[Definition~8.1]{Bo06}), for example, by replacing each $m \in \cM$ by a small ball $B_{\varepsilon}(m)$.  The radius $\varepsilon > 0$ of such balls should be chosen large enough to ensure that it is sufficiently likely for the process to hit $B_{\varepsilon}(m)$, but simultaneously small enough to control typical oscillations of harmonic functions within these balls.

Once the set of metastable points is identified, the low-lying eigenvalues are characterized in terms of mean exit times for generic situations.  Namely, each low-lying eigenvalue is equal to the inverse of the mean exit time from the corresponding metastable point up to negligible error terms.
\medskip

\noindent
\textbf{Starting ideas.} One would expect that the strategy of enlargements of metastable points that has been successfully used in the diffusion setting, should also apply to stochastic spin systems at finite temperature or in growing volumes.  However, proving general regularity estimates for solutions of elliptic equations is challenging on high dimensional discrete spaces, and so far, highly model dependent.

The present work provides a mathematical definition of metastability for Markov chains on possibly \emph{countable infinite} state spaces (see Definition~\ref{def:meta_sets}), where the metastable points that represent the sets in the metastable partition are replaced by metastable sets.  An advantage of this definition is that one can immediately deduce sharp estimates on the mean exit time to ``deeper'' metastable sets without using additional regularity estimates of harmonic functions; cf.\ Theorem~\ref{thm:mean:hitting:times}.  Moreover, sharp estimates on the smallest non-zero eigenvalues of the generator follow under the natural assumption of good mixing properties within metastable sets and some rough estimates on the regularity of the harmonic function at the boundary of metastable sets.  The primary tool in the proof is the \emph{capacitary inequality}; see Theorem~\ref{thm:CapInequ}.

A critical observation leading to the present definition of metastability is the following:  It is well known that classical Poincar\'e--Sobolev inequalities on $\Z^d$ for functions with compact support, say on a ball $B_r(x) \subset \Z^d$ with radius $r > 0$ and center~$x$, follow from the isoperimetric properties of the underlying Euclidean space by means of the so-called co-area formula.  The isoperimetric inequality states that
\begin{align*}
  |A|^{(d-1)/d} \;\leq\; C_{\mathrm{iso}}\, |\partial A|,
  \qquad \forall\, A \subset B_r(x),
\end{align*}
where $|A|$ and $\partial A$ denotes the cardinality and the boundary of the set $A$. The latter is defined as the set of all points $x \in A$ for which there exists a $y \not\in A$ such that $\{x,y\}$ is an element of the edge set of $\Z^d$.  For a positive recurrent Markov chain with state space $\cS$ and invariant distribution $\mu$ functional inequalities can also be established provided that the isoperimetric inequality is replaced by a measure-capacity inequality; cf.\ Proposition~\ref{prop:OrliczBirnbaum}.  For $B \subset \cS$ and $\Psi\!: \R_+ \to \R_+$ being a convex function, the measure-capacity inequality is given by
\begin{align*}
  \mu[A]\, \Psi^{-1}\bra[\big]{1/\mu[A]} 
  \;\leq\; 
  C_{\Psi}\, \capacity(A, B^c),
  \qquad \forall\, A \subset B.
\end{align*}
Inspired by the form of the measure-capacity inequality, we propose a definition of metastability for Markov chains that also encodes local isoperimetric properties by considering for any subset $A$ outside of the union of the metastable sets its escape probability to the union of the metastable sets.

To demonstrate the usefulness of our approach, we prove sharp estimates on the spectral gap and the logarithmic Sobolev constants for the \emph{random field Curie--Weiss model} at finite temperature and with a continuous bounded distribution of the random field.  To prove rough regularity estimates of harmonic functions, we use a coupling construction initially invented in \cite{BBI12}.

In the present work, we decided to focus only on discrete-time Markov chains to keep the presentation as brief as possible.  However, our methods also apply to Markov chains in continuous time with apparent modifications.
\smallskip

The remainder of this paper is organized as follows.  In the next subsection, we describe the setting to which our methods apply.  In Subsections~\ref{subsec:main_results} and \ref{subsec:main_results:RFCW} we state our main results.  In Section~\ref{sec:FI} we first prove the capacitary inequality for reversible Markov processes.  In particular, we show how this universal estimate allows us to derive so-called Orlicz--Birnbaum estimates from which estimates on the Poincar\'e and logarithmic Sobolev constants can be easily deduced.  Then, we prove our main results in Section~\ref{sec:MetaMC}.  Finally, in Section~\ref{sec:RFCW} we apply the previously developed methods to the random field Curie--Weiss model.

\subsection{Setting}
%
%
%
%
Consider an \emph{irreducible} and \emph{positive recurrent} Markov process $X = (X(t) : t \in \N_0)$ in discrete-time on a countable state space $\cS$ with transition probabilities denoted by $(p(x,y) : x,y \in \cS )$.  For any measurable and bounded function $f\!: \cS \to \R$, define the corresponding (discrete) generator by
\begin{align}\label{eq:generator}
  \big(L f\big)(x)
  \;\ldef\;
  \sum_{y \in \cS}\, p(x,y)\, \big(f(y) - f(x)\big).
\end{align}
Throughout, we assume that the Markov chain is \emph{reversible} with respect to a unique invariant distribution $\mu$.  That is, the transitions probabilities satisfy the \emph{detailed balance condition}
\begin{align}\label{eq:detailedBalance}
  \mu(x)\, p(x,y)
  \;=\;
  \mu(y)\, p(y,x)
  \qquad \text{for all } x, y \in \cS.
\end{align}
We denote by $\prob_{\!\nu}$ the law of the Markov process given that it starts with initial distribution $\nu$.  If the initial distribution is concentrated on a single point $x \in \cS$, we simply write $\prob_x$.  For any $A \subset \cS$, let $\tau_A$ be the first hitting time of the set $A$ \emph{after} time zero, that is
\begin{align*}
  \tau_A
  &\;\ldef\;
  \inf \big\{t > 0 \;:\; X(t) \in A\, \big\}.
\end{align*}
Hence, for $X(0)\in A$, $\tau_A$ is the \emph{first return time} to $A$ and, for $X(0)\not\in A$, $\tau_A$ is the \emph{first hitting time} of $A$.  In case the set $A$ is a singleton $\{x\}$ we write $\tau_x$ instead of $\tau_{\{x\}}$.
\smallskip

We are interested in Markov chains that exhibit a metastable behavior.  For this purpose we introduce the notion of metastable sets.
\begin{defn}[Metastable sets]\label{def:meta_sets}
  For fixed $\varrho > 0$ and $K \in \mathbb{N}$ let $\cM = \set*{M_1, \ldots, M_K}$ be a set of subsets of $\cS$ such that $M_i \cap M_j = \emptyset$ for all $i \ne j$.  A Markov chain $(X(t) : t \geq 0)$ is called $\varrho$-metastable with respect to a \emph{set of metastable sets}~$\cM$, if
  \begin{align}\label{intro:eq:cap:meta_sets}
    \abs{\cM} \ \frac{\max_{M \in \cM}
      \prob_{\mu_M}\!\big[
        \tau_{{\scriptscriptstyle \bigcup_{i=1}^K} M_i \setminus M} < \tau_M
      \big]}
    {
      \min_{A \,\subset \cS \setminus {\scriptscriptstyle \bigcup_{i=1}^K} M_i}
      \prob_{\mu_A}\!\big[\tau_{{\scriptscriptstyle \bigcup_{i=1}^K} M_i} < \tau_A\big]
    }
    \;\leq\;
    \varrho
    \;\ll\;
    1,
  \end{align}
  where $\mu_A(x) = \mu[x \mid A]$, $x \in A \ne \emptyset$ denotes the conditional probability on the set $A$ and $\abs{\cM}$ denotes the cardinality $K$ of $\cM$.
\end{defn}
\begin{rem}
  \begin{enumerate}[label=(\roman{enumi})]
  \item  The definition above is a generalization of the one given in \cite[Definition~8.2 and Remark~8.3]{BdH15} in terms of metastable points.  As it was already pointed out in \cite{Bo04}, the hitting probability of single configurations in high dimensional discrete state spaces or continuous state spaces are either zero or are much smaller than the ones of a small neighborhood around them. Hence, it is necessary to come up with a definition that involves metastable sets.  However, the choice of the sets $\set{M_1,\dots, M_K}$ are typically model dependent.  For instance, in the random field Curie--Weiss model with continuous distribution of the random field each metastable set is defined as the preimage with respect to the mesoscopic magnetization of a local minima of the mesoscopic free energy (see Section~\ref{subsec:main_results:RFCW} for details).  Let us stress the fact that in this model it suffices to only take in account the sufficiently deep minima in order to verifying~\eqref{intro:eq:cap:meta_sets}.

  Further, notice that Definition~\ref{def:meta_sets} does not depend explicitly on the cardinality of the state space $\cS$.  As a consequence, the constant $\varrho$ does not interfere with $|\cS|$.  This makes it possible to apply Definition~\ref{def:meta_sets} for both Markov chains with countable infinite state spaces and for interacting particle systems with state spaces $\cS = \{-1,+1\}^{\Lambda}$, $\Lambda \subset \Z^d$ for which the left-hand side of \cite[equation~(8.1.5)]{BdH15} might be larger than 1.  Typical examples are the random field Curie--Weiss model (cf.~Section~\ref{sec:RFCW}), where $\Lambda = \{1, \ldots, N\}$ with $N \to \infty$ and Ising models with Glauber dynamics at low temperature when $|\Lambda|$ diverges as the temperature tends to zero, cf.\ \cite[Section~19]{BdH15}.

  \item The main novelty of Definition~\ref{def:meta_sets} is the modification of the denominator compared to \cite[equation~(8.1.5)]{BdH15}. The main advantage of this particular form is the fact that estimates on various $\ell^p(\mu)$-norms of harmonic functions can be immediately derived.  This becomes apparent in Theorem~\ref{thm:mean:hitting:times} where sharp estimates on the mean exit time to ``deeper'' metastable sets are proven without using additional regularity and renewal estimates.
    
  \item Notice that if $|\cS| < \infty$ and $|M_i| = 1$ for all $i = 1, \ldots, K$, then the definition of metastability from the potential theoretic literature (see \cite[equation~(8.1.5)]{BdH15}) implies \eqref{intro:eq:cap:meta_sets}.  Since, in this setting, the numerator in both definitions coincides, it suffices to consider the denominator.  In view of \eqref{eq:detailedBalance},
    \begin{align*}
      \mu[A] \prob_{\mu_A}\!\big[\tau_{\!\cM} < \tau_A\big]
      &\;=\;
        \sum_{x \in A} \sum_{m \in \cM} 
        \mu[x] \prob_x\!\big[\tau_{\!\cM} < \tau_{A},\, X(\tau_{\!\cM}) = m\big]
      \\[1ex]
      &\;=\;
        \sum_{x \in A} \sum_{m \in \cM}
        \mu[m] \prob_m\!\big[\tau_{A} < \tau_{\!\cM},\, X(\tau_A) = x\big]
      \\[1ex]
      &\;=\;
        \mu[\cM] \prob_{\mu_{\cM}}\!\big[\tau_{A} < \tau_{\!\cM}\big]
    \end{align*}
    for all non-empty sets $A \subset \cS \setminus \cM$.  Hence,
    \begin{align*}
      \mspace{72mu}
      \prob_{\mu_A}\!\big[\tau_{\cM} < \tau_A\big]
      &\;\geq\; 
      \frac{1}{|A|} \sum_{a \in A} \frac{\mu[\cM]}{\mu[A]} 
      \prob_{\mu_{\cM}}\!\big[\tau_{a} < \tau_{\!\cM}\big]
      \\[1ex]
      &\;=\;
      \frac{1}{|A|} \sum_{a \in A} \frac{\mu[a]}{\mu[A]} 
      \prob_{a}\!\big[\tau_{\!\cM} < \tau_{a}\big]
      \;\geq\;
      \frac{1}{|\cS|} \min_{a \in \cS \setminus \cM} 
      \prob_{a}\!\big[\tau_{\!\cM} < \tau_{a}\big].
    \end{align*}
  \item All hitting probabilities appearing in Definition~\ref{def:meta_sets} can be equivalently expressed in terms of capacities, cf.\ Remark~\ref{rem:meta_sets_cap}.  The verifiability of Definition~\ref{def:meta_sets} relies crucially on the fact that upper and lower bounds on capacities can easily be deduced from their variational characterization.  In order to exemplify the usefulness of this approach, our key example will be the random field Curie--Weiss model with continuous distribution of the random field.
  \end{enumerate}
\end{rem}
\begin{assume}\label{ass:metastability:sets}
  Assume that for some $2 \leq K < \infty$ there exists non-empty, disjoint subsets $M_1, \dots, M_K \subset \cS$ and $\varrho > 0$ such that the Markov chain $(X(t) : t \geq 0)$ is $\varrho$-metastable with respect to $\cM = \set*{M_i : i \in 1, \ldots, K}$.
\end{assume}
The definition of metastable sets induces an almost canonical partition of the state space~$\cS$ into local valleys. 
\begin{defn}[Metastable partition]
  For any $M_i \in \cM$, the \emph{local valley}~$\valley_i$ around the metastable set $M_i$ is defined by
  \begin{align*}
    \valley_i
    \ldef
    M_i \cup \bigg\{
      x \in \cS \setminus {\textstyle \bigcup\limits_{j=1}^K} M_j
      \,:\,
      \prob_{x}\!\big[
        \tau_{M_i} < \tau_{{\scriptscriptstyle \bigcup_{j=1}^K} M_j \setminus M_i}
      \big]
      \geq
      \!\!\max_{M' \in \cM \setminus M_i}\!\!
      \prob_{x}\!\big[
        \tau_{M'} < \tau_{{\scriptscriptstyle \bigcup_{j=1}^K} M_j \setminus M'}
      \big]
    \bigg\}.
  \end{align*}
  A set of metastable sets $\cM = \set*{M_1, \ldots, M_K}$ gives rise to a \emph{metastable partition} $\set*{\cS_i : i = 1, \ldots, K}$ of the state space~$\cS$, that is
  \begin{align*}
    &\text{(i)}\quad M_i \subseteq \cS_i \;\subset\; \valley_i,&
    &\text{(ii)}\quad {\textstyle \bigcup_{i=1}^K} \cS_i \;=\; \cS,&
    &\text{(iii)}\quad \cS_i \cap \cS_j = \emptyset \ \ \text{ for }\ \  i \ne j \ . 
  \end{align*}
\end{defn}
\begin{rem}
  Notice that, by Lemma~\ref{lem:negligible:points}, any point $x \in \valley_i \cap \valley_j$ that lies in the intersection of two different local valleys has a negligible mass compared to the mass of the corresponding metastable sets $\mu[M_i]$ and $\mu[M_j]$.
\end{rem}
The potential theoretic approach to metastability relies on the translation of probabilistic objects to analytic ones, which we now introduce along the lines of~\cite{BEGK01,BEGK02,BEGK04,BBI09,BdH15}.  We simply write $\mu_i[\cdot] \ldef \mu[\,\cdot\, | \cS_i]$ to denote the corresponding conditional measure.  Let $\ell^2(\mu)$ be the weighted Hilbert space of all square summable functions \mbox{$f\!: \cS \to \R$} and denote by $\skp{\cdot,\cdot}_{\mu}$ the scalar product in $\ell^2(\mu)$.  Due to the detailed balance condition~\eqref{eq:detailedBalance} the generator, $L$, is symmetric in $\ell^2(\mu)$, that is $\skp{-L f,g}_{\mu} = \skp{f,-L g}_{\mu}$ for any $g, f \in \ell^2(\mu)$.  The associated \emph{Dirichlet form} is given for any $f \in \ell^2(\mu)$ by
\begin{align*}
  \cE(f)
  \;\ldef\;
  \skp{f,-Lf}_{\mu}
  \;=\;
  \frac{1}{2}\, \sum_{x, y \in \cS}\, \mu(x)\, p(x,y)\, \big(f(x) - f(y)\big)^2,
\end{align*}
which by the basic estimate $\cE(f) \leq \norm{f}_{\ell^2(\mu)}^2$ is well-defined.  Throughout the sequel, let $A, B \subset \cS$ be disjoint and non-empty.  The \emph{equilibrium potential}, $h_{A,B}$, of the pair $(A, B)$ is defined as the unique solution of the boundary value problem
\begin{align}\label{eq:equi:potential}
  \left\{
    \begin{array}{rcll}
      \big(L f\big)(x)
      &\mspace{-5mu}=\mspace{-5mu}& 0,
      &x \in \cS \setminus (A \cup B) \\[1ex]
      f(x)
      &\mspace{-5mu}=\mspace{-5mu}& \one_A(x), \quad 
      &x \in \phantom{(}A \cup B.
    \end{array}
  \right.
\end{align}
Note that the equilibrium potential has a natural interpretation in terms of \emph{hitting probabilities}, namely $h_{A,B}(x) = \prob_{x}[\tau_A < \tau_B]$ for all $x \in \cS \setminus (A \cup B)$.  A related quantity is the \emph{equilibrium measure}, $e_{A,B}$, on $A$ which is defined through
\begin{align}\label{eq:equi:measure}
  e_{A,B}(x)
  \;\ldef\;
  -\big(L h_{A,B} \big)(x)
  \;=\;
  \prob_{x}[\tau_B < \tau_A],
  \qquad \forall\, x \in A.
\end{align}
Clearly, the equilibrium measure is only non-vanishing on the (inner) boundary
of the set~$A$.  Further, the \emph{capacity} of the pair $(A, B)$ with potential one on $A$ and zero on $B$ is defined by
\begin{align}\label{eq:capacity:def}
  \capacity(A,B)
  \;\ldef\;
  \sum_{x \in A}\, \mu(x)\, e_{A,B}(x)
  \;=\;
  \sum_{x \in A}\, \mu(x)\, \prob_{x}[\tau_B < \tau_A]
  \;=\;
  \cE(h_{A,B}).
\end{align}
In particular, we have that
\begin{align}\label{eq:Capacity:HittingTimes}
  \prob_{\mu_A}[\tau_B < \tau_A]
  \;=\;
  \frac{\capacity(A, B)}{\mu[A]},
\end{align}
Moreover, $\capacity(A, B) = \capacity(B, A)$ and, as an immediate consequence of the probabilistic interpretation of capacities, cf.\ \eqref{eq:capacity:def}, we have that
\begin{align}\label{eq:cap:monotonicity}
  \capacity(A, B')
  \;\leq\;
  \capacity(A, B)
  \qquad \forall\, B' \subset B.
\end{align}
Let us emphasize that capacities have several variational characterizations (see for instance~\cite[Chaper 7.3]{BdH15}), which can be used to obtain upper and lower bounds.  One of them is the \emph{Dirichlet principle}
\begin{align}\label{eq:DirichletPrinciple}
  \capacity(A, B)
  \;=\;
  \inf\big\{\cE(f) \;:\; f|_A = 1,\; f|_B = 0,\; 0 \leq f \!\leq 1 \big\}
\end{align}
with $f = h_{A,B}$ as its unique minimizer.  Further, we denote by $\nu_{A,B}$ the \emph{last-exit biased distribution} that is defined by
\begin{align}\label{eq:def:LastExitBiasedDistri}
  \nu_{A,B}(x)
  \;\ldef\;
  \frac{\mu(x)\, \prob_{\!x}\!\big[\tau_B < \tau_A\big]}
  {\sum_{y \in A} \mu(y)\, \prob_{\!y}\!\big[\tau_B < \tau_A\big]}
  \;=\;
  \frac{\mu(x)\, e_{A,B}(x)}{\mathop{\mathrm{cap}}(A,B)},
  \qquad
  \forall\, x \in A.
\end{align}
Let us recall that $\mu_A(x) = \mu[x | A]$, which implies that $\nu_{A, B} \ll \mu_A$ for any non-empty, disjoint subsets $A, B \subset \cS$.
\begin{rem}\label{rem:meta_sets_cap}
  In view of \eqref{eq:Capacity:HittingTimes}, the condition \eqref{intro:eq:cap:meta_sets} can alternatively be written as
  \begin{align*}
    \forall A \subset \cS\setminus \bigcup_{i=1}^K M_i\ \forall M \in \cM :
    \quad
    \abs{\cM}\;\frac{
      \capacity\big(M, {\textstyle \bigcup_{i=1}^K M_i} \setminus M\big) / \mu[M]
    }
    {\capacity\big(A, {\textstyle \bigcup_{i=1}^K M_i} \big) / \mu[A]}
    \;\leq\;
    \varrho
    \;\ll\;
    1.
  \end{align*}
  Hence, the assumption of metastability is essentially a quantified comparison of capacities and measures.
\end{rem}
Finally, we write $\Mean_{\nu}[f]$ and $\var_{\nu}[f]$ to denote the expectation and the variance of a function $f\!: \cS \to \R$ with respect to a probability measure $\nu$.  Moreover, we define the relative entropy by
\begin{align*}
  \Ent_{\nu}[f^2]
  \;\ldef\;
  \Mean_{\nu}[f^2 \ln f^2] - \Mean_{\nu}[f^2]\, \ln \Mean_{\nu}[f^2],
\end{align*}
where we indicate the probability distribution $\nu$ explicitly as a subscript.

\subsection{Main result}
\label{subsec:main_results}
The first result concerns the mean hitting times of meta\-stable sets. We obtain an asymptotically expression in terms of capacities solely under Assumption~\ref{ass:metastability:sets}, if we, in addition, assume a bound on the asymmetry of the involved local minima.
\begin{thm}\label{thm:mean:hitting:times}
  Suppose that Assumption~\ref{ass:metastability:sets} holds with $K \geq 2$.  Fix $M_i \in \cM$ and define
  \begin{align*}
    J(i)
    \;\ldef\;
    \set[\big]{
      j \in \set*{1, \ldots, K} \setminus \{i\} : \mu[M_j] \geq \mu[M_i]
    }
    \qquad\text{and}\qquad
    B
    \;\ldef\;
    \bigcup\nolimits_{j \in J(i)} M_j. 
  \end{align*}
  If $B \ne \emptyset$ and if there exists $\delta \in [0, 1)$ such that $\mu[\cS_j] \leq \delta \mu[\cS_i]$ for all $j \not\in J(i) \cup \{i\}$ then
  \begin{align*}
    \mean_{\nu_{M_i, B}}[\tau_{B}]
    \;=\;
    \frac{\mu[\cS_i]}{\capacity(M_i, B)}\, 
    \Bigl(
      1 
      + 
      O\bigl(\delta + \varrho \ln (C_{\textup{ratio}} / \varrho)\bigr)
    \Bigr) \ , 
  \end{align*}
  with
  \begin{align*}
    C_{\textup{ratio}} 
    \;\ldef\; 
    \max_{j \in J(i)}\, \mu[\cS_j] / \mu[\cS_i] 
    \;<\; 
    \infty \ . 
  \end{align*}
\end{thm}
The main objects of interest in the present paper are the Poincar\'e and logarithmic Sobolev constant that are defined as follows.
\begin{defn}[Poincar\'e and logarithmic Sobolev constant]
  The Poincar\'e constant $C_{\PI} \equiv C_{\PI}(P, \mu)$ is defined by
  \begin{align}\label{eq:def:PI-constant}
    C_{\PI}
    \;\ldef\;
    \sup\big\{
      \var_{\mu}[f] \;:\; f \in \ell^2(\mu) \text{ such that } \cE(f) = 1
    \big\},
  \end{align}
  whereas the logarithmic Sobolev constant $C_{\LSI} \equiv C_{\LSI}(P, \mu)$ is given by
  \begin{align}\label{eq:def:LSI-constant}
    C_{\LSI}
    \;\ldef\;
    \sup\big\{
      \Ent_{\mu}[f^2] \;:\; f \in \ell^2(\mu) \text{ such that } \cE(f) = 1
    \big\}.
  \end{align}
\end{defn}
Let $\{F_i : i \in I\} = \cM \cup \{ \{x\} : x \in \cS \setminus \bigcup_{i=1}^K M_i\}$ be a partition of $\cS$ and denote by $\cF \ldef \sigma(F_i : i \in I)$ the corresponding $\sigma$-algebra, that is, $\cF$ is the $\sigma$-algebra lumping the metastable sets to single points.  Further, for any $f \in \ell^2(\mu)$ define the conditional expectation $\Mean_{\mu}[f \,|\, \cF]\!: \cS \to \R$ by
\begin{align}\label{def:F:cond:EX}
  \Mean_{\mu}[f \,|\, \cF](x) \;\ldef\; \Mean_{\mu}[f \,|\, F_i]
  \qquad \Longleftrightarrow \qquad
  F_i \ni x \ .
\end{align}
The starting point for proving sharp estimates of both the Poincar\'e and the logarithmic Sobolev constant in the context of metastable Markov chains is a splitting of the variance and the entropy into the contribution within and outside the metastable sets. The following two identities are the starting point of the identification of local relaxation within metastable valleys and rare transitions between metastable sets and hold for any $f \in \ell^2(\mu)$
\begin{align}
  \label{eq:var:split1}
  \var_{\mu}[f]
  &\;=\;
  \sum_{i=1}^K\, \mu[M_i]\, \var_{\mu_{M_i}}\![f]
  \,+\,
  \var_{\mu}\!\big[\Mean_{\mu}[f \,|\, \cF]\big]
  \\
  \label{eq:ent:split1}
  \Ent_{\mu}[f^2]
  &\;=\;
  \sum_{i=1}^K\, \mu[M_i]\, \Ent_{\mu_{M_i}}[f^2]
  \,+\,
  \Ent_{\mu}\!\big[\Mean_{\mu}[f^2 \,|\, \cF]\big] \ . 
\end{align}

Our main result relies on an assumption on the Poincar\'e and logarithmic Sobolev constants within the metastable sets and on a regularity condition for the  last exit biased distribution.
\begin{assume}\label{ass:PI+LSI:metastable:sets}
  Assume that for any $i \in \{1, \ldots, K\}$
  \begin{align}
    &\text{(i)}&
    \label{eq:ass:PI:metastable:sets}
    C_{\PI,i}
    &\;=\;
    \sup\big\{
      \mspace{6mu}\var_{\mu_{M_i}}[f]\mspace{6mu} 
      \,:\, 
      f \in \ell^2(\mu) \text{ such that } \cE(f) = 1
    \big\}
    \;<\;
    \infty,
    \\
    \label{eq:ass:LSI:metastable:sets}
    &\text{(ii)}&
    C_{\mathrm{LSI}, i}
    &\;=\;
    \sup\big\{
      \Ent_{\mu_{M_i}}[f^2]
      \,:\,
      f \in \ell^2(\mu) \text{ such that } \cE(f) = 1
    \big\}
    \;<\;
    \infty.
  \end{align}
\end{assume} 
In the error estimates the following derived constants occur
\begin{align*}
  C_{\PI,\cM}
  \;\ldef\; 
  \max\set[\bigg]{1, \sum_{i=1}^K\, \mu[M_i]\, C_{\PI,i}}
  \qquad\text{and}\qquad
  C_{\LSI, \cM} 
  \;\ldef\; 
  \max\set[\bigg]{1, \sum_{i=1}^K\, \mu[M_i]\, C_{\mathrm{LSI}, i}}.
\end{align*}
\begin{rem}
  The Assumption~\ref{ass:PI+LSI:metastable:sets} ensures that the process within each metastable set mixes quickly.  It can be interpreted as an additional smallness condition on the metastable sets $M\in \cM$ and for simple enough systems a simple bound on $C_{\PI,\cM}$ and $C_{\LSI,\cM}$ in terms of the maximal diameter of the sets $M\in \cM$ may be sufficient. For more complex systems, like the Curie--Weiss model, the constants~$C_{\PI,\cM}$ and~$C_{\LSI,\cM}$ may be comparable to known systems, which in this case is the Bernoulli--Laplace model.
\end{rem}
\begin{assume}[Regularity condition]\label{ass:regularity}
  Assume that there exists $\eta \in [0, 1)$ such that
  \begin{align}\label{eq:regularity}
    \var_{\mu_{M_i}}\!\bigg[\frac{\nu_{M_i,M_j}}{\mu_{M_i}}\bigg]
    \;\leq\;
    \frac{\eta\, \mu[M_i]}{\capacity(M_i, M_j)}
    \qquad
    \forall\, M_i, M_j \in \cM \quad \text{with } i \ne j.
  \end{align}
\end{assume}
\begin{rem}
  \begin{enumerate}[label=(\roman{enumi})]
  \item Note that the tentative definition of metastable sets as given in \cite[equation~(8.1.3)]{BdH15} would immediately imply that $\eta = o(1)$.  However, it is still an open problem how to relate the probabilities appearing in \cite[equation~(8.1.3)]{BdH15} to capacities.

  \item Since $e_{M_i, M_j}(x) \leq \capacity(M_i, M_j) / \mu(x)$ for all $x \in M_i$, the following trivial upper bound on $\eta$ holds
    \begin{align*}
      \eta
      \;\leq\;
      \min\set[\Big]{
        1, \varrho\, |M_i|\, \max_{x, y \in M_i}\set*{\mu(x) / \mu(y)}
      }.
    \end{align*}
    Hence, $\eta \ll 1$ provided that for each metastable set $M_i$ both its cardinality and the fluctuations of the invariant distribution $\mu$ on it are sufficiently small compared to $\varrho$.
    
  \item The above upper bound does not apply to particle systems like the Curie--Weiss model since $|M_i|$ is exponentially large in the system size.  Therefore, the verification of \eqref{eq:regularity} is based on coupling techniques.  For that purpose, the crucial observation is that the Curie--Weiss model is nearly lumpable in the sense that there exists a mesoscopic description, which up to small perturbations is Markovian. Under this condition Assumption~\ref{ass:regularity} is verifiable with $\eta$ of the same order as $\varrho$.  We expect that such strategy may apply to different mean field models that exhibit an effective mesoscopic description.
  \end{enumerate}
\end{rem}
\begin{rem}
  If each $M \in \cM$ consists of a single point, that is, $\forall M\in \cM: |M| = 1$, Assumptions~\ref{ass:regularity} and~\ref{ass:PI+LSI:metastable:sets} are satisfied for $\eta = 0$ and $C_{\PI,\cM} = C_{\LSI,\cM} = 1$.
\end{rem}
For the sake of presentation let us state the main result in the case of two metastable sets $K=2$. For the statement in the case of $K>2$, we refer to Theorem~\ref{thm:PI:metastable} and Theorem~\ref{thm:LSI:metastable}.
\begin{thm}\label{thm:main:PI}
  Suppose that the Assumptions~\ref{ass:metastability:sets} with $K=2$, \ref{ass:regularity} and \ref{ass:PI+LSI:metastable:sets}~i) hold such that $C_{\PI,\cM}(\varrho + \eta) \ll 1$. Then, it holds that
  \begin{align}\label{eq:main:PI}
    C_{\PI}
    \;=\;
    \frac{\mu[\cS_1]\, \mu[\cS_2]}{\capacity(M_1, M_2)}\;
    \bra[\Big]{1 + O\bra[\big]{\sqrt{C_{\PI,\cM} \bra{\varrho  + \eta}}}}.
  \end{align}
  Moreover, if in addition Assumption~\ref{ass:PI+LSI:metastable:sets}~ii) holds and
  \begin{align}\label{ass:Cmass}
    C_{\textup{mass}}
    \;\ldef\;
    \max_{i \in \{1, \ldots, K\}} \max_{x \in \cS_i}\,
    \ln\big(1 + \me^2 / \mu_i(x) \big)\
    \;<\;
    \infty
  \end{align}
  such that $C_{\textup{mass}} \, C_{\LSI,\cM}(\varrho + \eta) \ll 1$. Then, it holds that
  \begin{align}\label{eq:main:LSI}
    C_{\LSI}
    \;=\;
    \frac{1}{\Lambda\bra*{\mu[\cS_1], \mu[\cS_2]}}\; 
    \frac{\mu[\cS_1]\, \mu[\cS_2]}{\capacity(M_1, M_2)}\;
    \bra[\Big]{
      1 
      + 
      O\bra[\big]{\sqrt{C_{\textup{mass}}\, C_{\LSI,\cM} \bra{\varrho  + \eta}}}
    },
  \end{align}
  where $\Lambda(\alpha, \beta) \ldef \int_0^1 \alpha^s \beta^{1-s}\; \dx{s} = (\alpha-\beta)/\ln\tfrac{\alpha}{\beta}$ for $\alpha,\beta >0$ is the logarithmic mean.
\end{thm}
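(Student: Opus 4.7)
The plan is to establish matching upper and lower bounds for both $C_{\PI}$ and $C_{\LSI}$ starting from the decompositions \eqref{eq:var:split1}-\eqref{eq:ent:split1}. Fix a test function $f$ with $\cE(f)=1$. The local contributions $\sum_i \mu[M_i]\var_{\mu_{M_i}}[f]$ and $\sum_i\mu[M_i]\Ent_{\mu_{M_i}}[f^2]$ are immediately controlled by $C_{\PI,\cM}$ and $C_{\LSI,\cM}$ via Assumption~\ref{ass:PI+LSI:metastable:sets}, so everything boils down to estimating the coarse-grained terms $\var_\mu[\mean_\mu[f\mid\cF]]$ and $\Ent_\mu[\mean_\mu[f^2\mid\cF]]$ by the correct prefactor times $\cE(f)$.

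For the macroscopic upper bound I would apply the capacitary inequality (Theorem~\ref{thm:CapInequ}) via the Orlicz-Birnbaum framework (Proposition~\ref{prop:OrliczBirnbaum}) with $\Psi(t)=t$ for the variance and $\Psi(t)=t\ln(1+t)$ for the entropy. For $K=2$ this effectively reduces the problem to a two-state Markov chain, whose Dirichlet form on any two-valued function is bounded below by $(a_{1}-a_{2})^{2}\,\capacity(M_{1},M_{2})$ by the Dirichlet principle \eqref{eq:DirichletPrinciple}. The two-state entropic calculation is exactly what produces the logarithmic mean $\Lambda(\mu[\cS_1],\mu[\cS_2])$, and the factor $C_{\mathrm{mass}}$ records how the Orlicz weight grows on singletons with small $\mu_i(x)$. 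The $\nu_{M_i,M_j}$-averages that appear naturally through \eqref{eq:def:LastExitBiasedDistri} are then exchanged for the $\mu_{M_i}$-averages occurring in the decomposition using regularity Assumption~\ref{ass:regularity} together with Cauchy-Schwarz, which contributes the $O(\sqrt{\eta})$ error; the $O(\sqrt{\varrho})$ error comes from the transition region $\cS\setminus (M_1\cup M_2)$, whose measure is controlled via Proposition~\ref{prop:mean:hitting:times} and Lemma~\ref{lem:negligible:points}.

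For the matching lower bounds I would use explicit test functions. For $C_{\PI}$, take $f=h_{M_1,M_2}$: then $\cE(f)=\capacity(M_1,M_2)$ by \eqref{eq:capacity:def}, while $\var_\mu[h_{M_1,M_2}]$ is asymptotic to $\mu[\cS_1]\,\mu[\cS_2]$, because $h_{M_1,M_2}$ is close to $\mathbf{1}_{\cS_1}$ up to the asymptotically negligible transition region, and regularity keeps it essentially constant on each $M_i$. For $C_{\LSI}$, I would test with a two-parameter family of the form $f^{2}=a+(b-a)h_{M_1,M_2}$ (after a suitable truncation near the boundary) and optimise over $a,b>0$; this reproduces the sharp two-point LSI constant and yields the factor $1/\Lambda(\mu[\cS_1],\mu[\cS_2])$ in \eqref{eq:main:LSI}.

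The main obstacle is the macroscopic upper bound: after coarse-graining, $\mean_\mu[f\mid\cF]$ still takes many values on the singleton cells of $\cF$ outside $M_1\cup M_2$, so the naive Dirichlet-principle bound $(a_{1}-a_{2})^{2}\capacity(M_{1},M_{2})\leq \cE(f)$ cannot be applied pointwise. The Maz'ya-type capacitary inequality compensates for this precisely because \eqref{intro:eq:cap:meta_sets} forces \emph{every} subset $A\subset\cS\setminus\bigcup_i M_i$ to satisfy a comparable capacity-to-measure inequality, enabling the layer-cake integration inside Proposition~\ref{prop:OrliczBirnbaum} to go through with the right constant. The square-root loss in the stated error terms then emerges from using Cauchy-Schwarz to decouple the local and macroscopic contributions, and from the $L^{2}$-replacement of the two-valued approximation in Assumption~\ref{ass:regularity}.
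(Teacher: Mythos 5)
Your high-level architecture---the two-scale splitting \eqref{eq:var:split1}--\eqref{eq:ent:split1} together with \eqref{eq:var:split2}, absorbing the local contributions by Assumption~\ref{ass:PI+LSI:metastable:sets}, and testing with $h_{M_1,M_2}$ (resp.\ a truncated equilibrium-potential family) for the lower bounds---matches the paper's route through Theorems~\ref{thm:PI:metastable} and~\ref{thm:LSI:metastable}. There is, however, a genuine gap in the way you propose to obtain the sharp macroscopic upper bound. Running the capacitary inequality/Orlicz--Birnbaum machinery (Proposition~\ref{prop:OrliczBirnbaum}) on the coarse-grained function cannot produce the leading-order constant: the four-fold loss $C_\Phi\le 4\,C_\Psi$ in Proposition~\ref{prop:OrliczBirnbaum} is intrinsic and would spoil the prefactor, and your appeal to the Dirichlet principle only gives $\cE(f)\ge(a_1-a_2)^2\capacity(M_1,M_2)$ when $f$ is exactly constant on each $M_i$, which a general test function is not. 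The paper instead isolates the mean-difference term $\sum_{i\ne j}\mu[\cS_i]\mu[\cS_j](\Mean_{\mu_i}[f]-\Mean_{\mu_j}[f])^2$ in \eqref{eq:var:split2} and bounds it with constant exactly one in Lemma~\ref{lemma:mean:diff}: the identity $\Mean_{\nu_{M_i,M_j}}[f]-\Mean_{\nu_{M_j,M_i}}[f]=\capacity(M_i,M_j)^{-1}\skp{-Lh_{M_i,M_j},f}_\mu$ followed by Cauchy--Schwarz yields $(\Mean_{\nu_{M_i,M_j}}[f]-\Mean_{\nu_{M_j,M_i}}[f])^2\le\cE(f)/\capacity(M_i,M_j)$, and only then does Assumption~\ref{ass:regularity} trade the $\nu$-means for the $\mu_{M_i}$-means. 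The capacitary inequality enters solely for the \emph{subleading} terms $\var_{\mu_i}[\Mean_\mu[f\mid\cF]]$ inside the valleys (Lemma~\ref{lemma:local:PI}), where the factor~$4$ is harmless.

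The same gap recurs in your LSI argument: the prefactor $\Lambda(\mu[\cS_1],\mu[\cS_2])^{-1}$ does not come from the Orlicz--Birnbaum step but from the sharp two-scale entropy estimate of \cite[Corollary~2.8]{MS14}, which bounds $\Ent_\mu[\Mean_\mu[f^2\mid\cG]]$ by a weighted sum of local variances and mean-difference squares with precisely the weight $\mu[\cS_i]\mu[\cS_j]/\Lambda(\mu[\cS_i],\mu[\cS_j])$; the two-point Bernoulli LSI constant is invoked only in the lower bound. Also, the transition-region control you attribute to Proposition~\ref{prop:mean:hitting:times} (a statement about mean hitting times) in fact comes from Lemma~\ref{lem:est:L1-L2-h_ab} and Lemma~\ref{lem:harm:neighborhood}.
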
 
\begin{rem}
  By using a standard linearization argument in~\eqref{eq:def:LSI-constant}, it follows that $C_{\LSI} \geq 2 C_{\PI}$; see \cite[Proposition 5]{BT06}. Notice that in the symmetric case when $\mu[\cS_1] \bra[\big]{1+o(1)} = \frac{1}{2} = \mu[\cS_2] \bra[\big]{1+o(1)}$ we have that $C_{\LSI} = 2 C_{\PI} \bra[\big]{ 1+ o(1)}$. Let us note that Assumption~\eqref{ass:Cmass} restricts the result on the logarithmic Sobolev constant to finite state spaces.
\end{rem}
\begin{cor}\label{cor:main:PI}
  Suppose that the assumptions of Theorem~\ref{thm:main:PI} hold.  Further, assume that $C_{\textup{ratio}}^{-1}  \ll 1$ and $\varrho \ln (C_{\textup{ratio}}) \ll 1$.  Then,
  \begin{align*}
    C_{\PI}
    \;=\;
    \mean_{\nu_{M_2, M_1}}\!\big[\tau_{M_1}\big]\;
    \bra[\Big]{
      1 
      + 
      O\bra[\big]{
        C_{\textup{ratio}}^{-1} + \varrho \ln(C_{\textup{ratio}})
        + \sqrt{C_{\PI,\cM} \bra{\varrho  + \eta}}
      }
    }.
  \end{align*}
\end{cor}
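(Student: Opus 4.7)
The plan is to combine Theorem~\ref{thm:main:PI} (which gives $C_{\PI}$ in terms of capacity) with Proposition~\ref{prop:mean:hitting:times} (which gives the mean hitting time in terms of capacity) and then to eliminate the capacity from the picture, while keeping careful track of how the relative masses $\mu[\cS_1], \mu[\cS_2]$ interact with the assumption $C_{\textup{ratio}}^{-1} \ll 1$.

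First I would reduce to the case where $M_2$ is the \emph{lighter} metastable set, i.e.\ $\mu[\cS_2] \leq \mu[\cS_1]$; this is WLOG because the Poincaré constant is symmetric under permutation of the two sets, while $\nu_{M_2,M_1}$ and $\tau_{M_1}$ are chosen so that the mean hitting time is computed starting from the lighter well. With this choice, in the notation of Proposition~\ref{prop:mean:hitting:times} applied at index $i=2$ and $K=2$, one has $\cM_2 = \{M_1\}$, hence $J = \{1\}$, $B = M_1$, and the index set $\{1,\dots,K\} \setminus (J \cup \{i\})$ is empty, so the hypothesis involving $\delta$ is vacuous and we may take $\delta = 0$. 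Moreover $C_{\textup{ratio}} = \mu[\cS_1]/\mu[\cS_2]$, so the additional assumption $\varrho \ln(C_{\textup{ratio}}/\varrho) \ll 1$ of the corollary is exactly what Proposition~\ref{prop:mean:hitting:times} requires.

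Next I would apply Proposition~\ref{prop:mean:hitting:times} to obtain
\begin{align*}
  \mean_{\nu_{M_2,M_1}}\!\big[\tau_{M_1}\big]
  \;=\;
  \frac{\mu[\cS_2]}{\capacity(M_2,M_1)}\,
  \Big(1 + O\bra[\big]{\varrho \ln(C_{\textup{ratio}}/\varrho)}\Big),
\end{align*}
and combine this with~\eqref{eq:main:PI} from Theorem~\ref{thm:main:PI}:
\begin{align*}
  C_{\PI}
  \;=\;
  \frac{\mu[\cS_1]\,\mu[\cS_2]}{\capacity(M_1,M_2)}\,
  \Big(1 + O\bra[\big]{\sqrt{C_{\PI,\cM}(\varrho+\eta)}}\Big)
  \;=\;
  \mu[\cS_1]\cdot \mean_{\nu_{M_2,M_1}}\!\big[\tau_{M_1}\big]\,
  \Big(1 + O\bra[\big]{\sqrt{C_{\PI,\cM}(\varrho+\eta)} + \varrho\ln(C_{\textup{ratio}}/\varrho)}\Big),
\end{align*}
where I used that the product of two $(1+O(\cdot))$ factors is $(1+O(\cdot))$ since both error terms are $\ll 1$.

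Finally I would remove the prefactor $\mu[\cS_1]$. Since $\{\cS_1, \cS_2\}$ is a partition of $\cS$, we have $\mu[\cS_1] + \mu[\cS_2] = 1$, and therefore
\begin{align*}
  \mu[\cS_1]
  \;=\;
  \frac{C_{\textup{ratio}}}{1 + C_{\textup{ratio}}}
  \;=\;
  1 + O\bra[\big]{C_{\textup{ratio}}^{-1}},
\end{align*}
using the hypothesis $C_{\textup{ratio}}^{-1} \ll 1$. Absorbing this into the multiplicative error and combining the three error contributions, one obtains
\begin{align*}
  C_{\PI}
  \;=\;
  \mean_{\nu_{M_2,M_1}}\!\big[\tau_{M_1}\big]\,
  \Big(
    1 + O\bra[\big]{C_{\textup{ratio}}^{-1} + \varrho\ln(C_{\textup{ratio}}/\varrho) + \sqrt{C_{\PI,\cM}(\varrho+\eta)}}
  \Big),
\end{align*}
which is the claimed identity. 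The derivation is entirely routine once the correct labelling $i=2$ is chosen; the only subtlety worth flagging explicitly is the verification that the auxiliary parameter $\delta$ in Proposition~\ref{prop:mean:hitting:times} may indeed be set to zero in the two-well case, so that none of the hypotheses of the proposition introduce a new smallness requirement beyond those already listed in the corollary.
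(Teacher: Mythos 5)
Your derivation is correct and is the natural (and essentially the only) way to obtain the corollary: combine the capacity formula for $C_{\PI}$ in Theorem~\ref{thm:main:PI} with the mean-hitting-time formula of Proposition~\ref{prop:mean:hitting:times} at $i=2$, $B=M_1$, $\delta=0$, and absorb $\mu[\cS_1]=1+O(C_{\textup{ratio}}^{-1})$ into the error. One small imprecision worth noting: the index set $J(2)$ in Proposition~\ref{prop:mean:hitting:times} is defined by comparing $\mu[M_1]$ and $\mu[M_2]$, not $\mu[\cS_1]$ and $\mu[\cS_2]$, so identifying $\cM_2=\{M_1\}$ needs $\mu[M_1]\geq\mu[M_2]$ (tacitly assumed here, and implicit in the corollary's labelling) rather than the weaker $\mu[\cS_1]\geq\mu[\cS_2]$ you invoke as the WLOG normalization.
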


Let us comment on similar results in the literature.
\smallskip

The quantity on the right hand side of~\eqref{eq:main:PI} bears some similarity to the Cheeger constant \cite{Cheeger70} on weighted graphs \cite{LS88} defined by
\begin{align*}
  C_{\Cheeger}
  \;\ldef\;
  \sup_{A\subset \cS: \mu[A] \in (0,1)} 
  \frac{\mu[A]\, \mu[A^c]}{\capacity\bra*{ A, A^c}}
\end{align*}
with $A^c \ldef \cS \setminus A$.  Moreover, we note that $\capacity\bra*{ A, A^c} = -\skp{\one_{A}, L\one_{A^c}}_\mu$.  Then, the main result of~\cite[Theorem 2.1]{LS88} translated to the current setting reads
\begin{align*}
  C_{\Cheeger} \;\leq\; C_{\PI} \;\leq\; 8 C_{\Cheeger}^2 .
\end{align*}
Hence, the main result~\eqref{eq:main:PI} can be seen as an asymptotic sharp version of the Cheeger estimate in the metastable setting.

In the paper~\cite{BG16}, metastability has alternatively been characterized in terms of ratios $\eps$ between Dirichlet and Neumann spectral gaps of restricted generators. For this purpose the state space is decomposed into two sets $\cS = \cR \cup \cR^c$. Based on the assumption that $\eps \ll 1$, the result \cite[Theorem 2.9]{BG16} is an estimate on the mean-hitting time similar to Theorem~\ref{thm:mean:hitting:times}. Moreover, precise estimates of the relaxation rates toward the quasi-stationary distribution inside each element of the partition are obtained~\cite{BG16}. These estimates seem to be related to the local Poincar\'e inequality in Lemma~\ref{lemma:local:PI} below. Moreover, we expect, that there is a close connection between $\eps$ and $\varrho$ of Definition~\ref{def:meta_sets} in the setting $K=2$. 

In~\cite[Theorem 2.10]{BG16} a result bearing some similarity to~\eqref{eq:main:PI} is obtained. There the capacity $\capacity(M_1,M_2)$ needs to be replaced by so-called $(\kappa,\lambda)$-capacities between $\cR$ and $\cR^c$. These capacities are obtained by extending the state space by copies of $\cR$ and $\cR^c$ and equipping the connecting edges with conductivities $\kappa$ and $\lambda$. One notices that the error bound in this formulation depends on a careful choice of~$\kappa$ and~$\lambda$ in terms of $\eps$. The approach of this paper does not require such additional intermediate parameters and obtains similar results in Theorem~\ref{thm:main:PI} in a more straightforward manner.

\subsection{Random field Curie--Weiss model}
\label{subsec:main_results:RFCW}
One particular class of models we are interested in, are \emph{disordered mean field spin systems}.  As an example, we consider the Ising model on a complete graph, say on $N \in \N$ vertices, also known as Curie--Weiss model, in a random magnetic field.  The state space of this model is $\cS = \set*{-1,1}^N$.  The \emph{random Hamiltonian} is given by
\begin{align}\label{CW:Hamiltonian}
  H(\sigma)
  \;\ldef\;
  -\frac{1}{2N}\, \sum_{i,j = 1}^N \sigma_{i} \sigma_j
  \,-\, \sum_{i = 1}^N h_i \sigma_i ,
  \qquad \sigma \in \cS,
\end{align}
where $h \equiv (h_i : i \in \N)$ is assumed to be a family of i.i.d.~random variable on~$\R$ distributed according to $\prob^h$ with bounded support, that is
\begin{align}\label{CW:eq:ass:h}
  \exists\, h_{\infty} \in (0, \infty) : 
  \qquad  |h_i| \leq h_{\infty} 
  \qquad \prob^h\text{-almost surely.}
\end{align}
The \emph{random Gibbs measure} on $\cS$ is defined by
\begin{align*}
  \mu(\sigma)
  \;\ldef\;
  Z^{-1}\, \exp\bra{-\beta H(\sigma)}\, 2^{-N},
\end{align*}
where $\beta \geq 0$ is the inverse temperature and $Z$ is the normalization constant also called \emph{partition function}.  The additional factor $2^{-N}$ is for convenience and to be consistent with the definition in~\cite[(14.2.1)]{BdH15}.  The \emph{Glauber dynamics}, that we consider, is a Markov chain $(\sigma(t) : t \in \N_0)$ in discrete-time with \emph{random transition probabilities}
\begin{align}\label{CW:Glauber}
  p(\sigma, \sigma')
  \;\ldef\;
  \frac{1}{N}\, \exp\bra[\big]{- \beta \pra{ H(\sigma') - H(\sigma)}_+}\,
  \one_{|\sigma - \sigma'|_1=2},
\end{align}
where $[x]_+ \ldef \max\set{x,0}$ and $p(\sigma, \sigma) = 1 - \sum_{\sigma' \in \cS} p(\sigma, \sigma')$.  Notice that, for each realization of the magnetic field $h$, the Markov chain is ergodic and reversible with respect to the Gibbs measure $\mu$.

Various stationary and dynamic aspects of the random field Curie--Weiss model has been studied.  In particular, the metastable behavior of this model has been analyzed in great detail in \cite{BEGK01,BBI09,BBI12}, where the potential theoretic approach was used to compute precise metastable exit times and to prove the asymptotic exponential distribution of normalized metastable exit times.  For an excellent review we refer to the recent monograph \cite[Chapters~14 and 15]{BdH15}.  Estimates on the spectral gap have been derived in \cite{MP98} in the particular simple cases where the random field takes only two values $\pm \varepsilon$ and the parameters are chosen in such a way that only two minima are present.

A particular feature of this model is that it allows to introduce \emph{mesoscopic variables} by using a suitable coarse-graining procedure such that the induced dynamics are well approximated by a Markov process.  Let $I^h \ldef [-h_{\infty}, h_{\infty}]$ denote the support of $\prob^h$.  For any $n \in \N$ we find a partition of $I^h$ such that $|I_{\ell}^h| \leq 2h_{\infty} / n$ and $I^h = \bigcup_{\ell=1}^n I_{\ell}^h$.  Hence, each realization of $h$ induces a partition of the set $\set{1, \ldots, N}$ into mutually disjoint subsets
\begin{align*}
  \Lambda_{\ell} 
  \;\ldef\; 
  \set[\big]{i \in \set{1, \ldots, N} \,:\, h_i \in I_{\ell}^h }, 
  \qquad \ell \in \set{1, \ldots, n}.
\end{align*}
Based on this partition, consider the mesoscopic variable $\order\!: \cS \to \Gamma^n \subset [-1,1]^n$,
\begin{align*}
  \order(\sigma)
  \;=\;
  \bra[\big]{\order_1(\sigma), \ldots, \order_n(\sigma)}
  \qquad \text{with} \qquad
  \order_{\ell}(\sigma)
  \;\ldef\;
  \frac{1}{N}\, \sum_{i \in \Lambda_\ell} \sigma_i,
  \quad \ell \in \set{1, \ldots, n},
\end{align*}
that serves as an $n$-dimensional \emph{order parameter}.  A crucial feature of the mean field model is that the Hamiltonian \eqref{CW:Hamiltonian} can be rewritten as a function of the mesoscopic variable. In order to do so, for any $\ell \in \set{1, \ldots, n}$ the block-averaged field and its fluctuations are defined by
\begin{align*}
  \overbar{h}_{\ell}
  \;\ldef\;
  \frac{1}{\abs{\Lambda_\ell}}\, \sum_{i\in \Lambda_\ell} h_i
  \qquad\text{and}\qquad
  \tilde{h}_i \;\ldef\; h_i - \overbar{h}_{\ell},
  \quad \forall\, i \in \Lambda_{\ell}.
\end{align*}
Then,
\begin{align*}
  H(\sigma)
  \;=\;
  - N E\bra*{\order(\sigma)}
  -
  \sum_{\ell=1}^n \sum_{i \in \Lambda_\ell} \sigma_i \tilde{h}_i,
\end{align*}
where the function $E\!: [-1,1]^n \to \R$ is given by $E(\bfx) = \frac{1}{2} \bra*{\sum_{\ell=1}^n \bfx_{\ell}}^2 + \sum_{\ell=1}^n \overbar{h}_{\ell} \bfx_{\ell}$.  We define the distribution of $\order$ under the Gibbs measure as the \emph{induced measure}
\begin{align*}
  \magmu(\bfx)
  \;\ldef\;
  \mu \circ \order^{-1}(\bfx),
  \qquad \bfx \in \Gamma^n.
\end{align*}
Further, the \emph{mesoscopic free energy} $F\!:[-1,1]^n \to \R$ is defined by
\begin{align}\label{eq:CW:F}
  F(\bfx)
  \;\ldef\;
  E(\bfx) 
  + 
  \frac{1}{\beta}\, \sum_{\ell=1}^n\, \frac{|\Lambda_{\ell}|}{N}\,
  I_{\ell}\bra*{N \bfx_{\ell} / |\Lambda_{\ell}|},
\end{align}
where for any $\ell \in \set{1, \ldots, n}$ the \emph{entropy} $I_{\ell}$ is given as the Legendre-Fenchel dual of
\begin{align*}
  \R
  \;\ni\;
  t
  \;\mapsto\;
  \frac{1}{N}\,
  \sum_{i \in \Lambda_{\ell}} \ln \cosh\bra{t + \beta \tilde{h}_i}.
\end{align*}
Notice that the distribution $\magmu$ satisfies a sharp large deviation principle with scale~$N$ and rate function $F$.  The structure of the mesoscopic free energy landscape has been analyzed in great detail in \cite{BBI09}.  In particular, $\bfz \in [-1,1]^n$ is a critical point of $F$, if and only if, for all $\ell \in \set{1, \ldots, n}$
\begin{align}\label{CW:def:mesosCritPoint}
  \bfz_{\ell}
  \;=\;
  \frac{1}{N}\, \sum_{i \in \Lambda_{\ell}} \tanh\bra*{\beta( z(\bfz) + h_i)}
\end{align}
where $z(\bfz) = \sum_{\ell=1}^n \bfz_{\ell} \in \R$ solves in addition the equation $z = \frac{1}{N} \sum_{i=1}^N \tanh\bra*{\beta(z + h_i)}$.  It turns out that $\bfz$ is a critical point of index 1, if $\frac{\beta}{N} \sum_{i=1}^N 1 - \tanh^2\bra*{\beta(z(\bfz) + h_i)} > 1$.  Moreover, at any critical point $\bfz$ the value of the mesoscopic free energy can be computed explicitly and is given by
\begin{align}\label{CW:e:explicit:free_energy}
  F(\bfz)
  \;=\;
  \frac{1}{2} z(\bfz)^2
  \,-\,
  \frac{1}{\beta N} \sum_{i=1}^N \ln \cosh\bra*{\beta(z(\bfz) + h_i)} \ . 
\end{align}
Let us stress the fact that the topology of the mesoscopic energy landscape is independent of the artificial dimension parameter $n$.
\begin{rem}
  \begin{enumerate}[label=(\roman{enumi})]
  \item For a constant external field, that is, $h_i \equiv h$ for all $i$, the mesoscopic free energy, $F$, has two local minima if $\beta > 1$ and $h \in (-h_c(\beta), h_c(\beta))$, where $h_c(\beta) \ldef \sqrt{1-1/\beta} - \frac{1}{\beta} \ln (\sqrt{\beta} + \sqrt{\beta-1})$.
  \item By the strong law of large numbers, the set of solutions of the equation $z = \frac{1}{N} \sum_{i=1}^N \tanh\bra*{\beta(z + h_i)}$, determining the critical points of $F$, converges $\prob^h$-a.s.\ as $N \to \infty$ to the set of solutions of the deterministic equation
    \begin{align}\label{eq:CW:fixpoint:z}
      z
      \;=\;
      \EX^h\pra*{\tanh\bra[\big]{ \beta (z + h) }}.
    \end{align}
    Moreover, in view of \eqref{CW:def:mesosCritPoint}, the value of the mesoscopic free energy at critical points converges to a deterministic value for $\prob^h$-almost every realization of $h$ as $N$ tends to infinity.  
  \item If the distribution $\prob^h$ is symmetric, $z = 0$ is always a solution of \eqref{eq:CW:fixpoint:z}, and if $z > 0$ solves \eqref{eq:CW:fixpoint:z} than, by symmetry, $-z$ is as well a solution.  In general, the number of critical points depends on both the value of $\beta$ and the properties of the distribution $\prob^h$.   For discrete distributions $\prob^h$ the phase diagram has been studied in detail in \cite[Section 5]{APZ92} and \cite{SW85}.  
  \end{enumerate}
\end{rem}
In the sequel, we impose the following assumption on the law $\prob^h$.
\begin{assume}\label{CW:ass:law}
  Let $K \geq 2$ and assume that for $\prob^h$-almost every realization $h$, there exist $\beta > 0$ and $N_0(h) < \infty$ such that for all $N \geq N_0(h)$ and $n \geq 1$ the mesoscopic free energy $F\!: [-1,1]^n \to \R$ admits $K$ local minima.
\end{assume}
Denote by $\bfm_i \in \Gamma^n$, $i \in \set{1, \ldots, K}$, the best lattice approximation of the corresponding local minima.  We choose the label of $\bfm_i$ by the following procedure:  First, define for any non-empty, disjoint $\bfA, \bfB \subset \Gamma^n$ the \emph{communication height}, $\Phi(\bfA, \bfB)$, between $\bfA$ and $\bfB$ by
\begin{align}\label{CW:def:CommHeight}
  \Phi(\bfA, \bfB)
  \;=\;
  \min_{\boldsymbol{\gamma}}\max_{\bfx \in \boldsymbol{\gamma}} F(\bfx),
\end{align}
where the minimum is over all nearest-neighbour paths in $\Gamma^n$ that connect $\bfA$ and $\bfB$.  Then, the label is chosen in such a way that, with $\bfM_{k} \ldef \set{\bfm_1, \ldots, \bfm_k}$,
\begin{align}\label{CW:def:Delta}
  \Delta_{k-1}
  \;\ldef\;
  \Phi(\bfm_k, \bfM_{k-1}) - F(\bfm_k)
  \;\leq\;
  \min_{i < k}\set*{\Phi(\bfm_i, \bfM_k \setminus \bfm_i) - F(\bfm_i)}
\end{align}
for all $k = K, \ldots, 2$. Notice that, by construction, $\Delta_1 \geq \ldots \geq \Delta_{K-1} > 0 \rdef \Delta_K$.  Since $\Phi(\bfm_k, \bfM_{k-1})$ is given by the value of the mesoscopic free energy at the minimal saddle point between $\bfm_k$ and $\bfM_{k-1}$, \eqref{CW:e:explicit:free_energy} implies that the value of $\Delta_{k-1}$ is independent of $n$ for any $k = 2, \ldots, K$, and converges $\prob^h$-a.s.\ as $N \to \infty$.

In the sequel, we first impose conditions on the finiteness of the coarse-graining controlled by the parameter $n$.  Depending on the choice of $n$ the state space dimension $N$ has to be larger then a certain threshold.  In this sense, the results hold by first letting $N \to \infty$ and then $n \to \infty$.  With these definitions, we are able to formulate the statement that the random field Curie--Weiss model is $\varrho$-metastable in the sense of Definition~\ref{def:meta_sets}.
\begin{prop}[$\varrho$-metastability]\label{CW:prop:rho:metastable}
  Suppose that Assumption~\ref{CW:ass:law} holds.  Then, for $\prob^h$-almost every $h$ and any $c_1 \in (0,\Delta_{K-1})$ there exists $n_0 \equiv n_0(c_1)$ such that for all $n \geq n_0$ there exists $\overbar N < \infty$ such that for all $N \geq N_0(h) \vee \overbar N$ the random field Curie--Weiss model is $\varrho \ldef \me^{-c_1 \beta N}$-metastable in the sense of Definition~\ref{def:meta_sets} with respect to $\cM \ldef \{M_1,\dots, M_k \}$ with $M_k \ldef \order^{-1}(\bfm_k)$ for $k\in \set{1,\dots, K}$. 
\end{prop}
As an immediate consequence of Proposition~\ref{CW:prop:rho:metastable} and Theorem~\ref{thm:mean:hitting:times} we obtain the following result on the mean hitting between metastable sets with respect to the microscopic dynamics induced by the transition probabilities~\eqref{CW:Glauber}.
\begin{thm}
  Suppose that Assumption~\ref{CW:ass:law} holds.  For fixed $i \in \set{2,\dots,K}$ and $\delta > 0$ sufficiently small, suppose that, $\prob^h$-a.s., the sets
  \begin{align*}
    J(i)
    \;=\;
    \set{ j \in \set{1,\dots,i-1}: F(\bfm_j) +\delta \leq F(\bfm_i)}
    \qquad\text{and}\qquad B:= \bigcup_{j\in J(i)} M_i 
  \end{align*}
  are nonempty. Then, $\prob^h$-a.s.\ the following holds: For any $c \in (0,\min\set{\delta,\Delta_1, \dots, \Delta_{i-1}})$ there exists $n_0 = n_0(c)$ such that for all $n\geq n_0$ there exists $\overbar N$ such that, for all $N\geq N_0(h) \vee \overbar N$,
  \begin{align*}
    \mean_{\nu_{M_i, B}}[\tau_{B}]
    \;=\;
    \frac{\mu[\cS_i]}{\capacity(M_i, B)}\, 
    \bra*{ 1 + O(e^{-c \beta N})} \ , 
  \end{align*}
\end{thm}
To obtain matching upper and lower bounds in the application of Theorem~\ref{thm:main:PI} to the random field Curie--Weiss model in case $K \geq 3$, we impose the following non-degeneracy condition on the largest communication height. 
\begin{assume}[Non-degeneracy condition]\label{CW:ass:dominance}
  For $K \geq 3$, assume that $\prob^h$-a.s., there exist $\theta > 0$ and $N_1(h) < \infty$ such that
  \begin{align*}
    \Delta_1 - \Delta_2 \;\geq\; \theta,
    \qquad \forall\, N \geq N_1(h). 
  \end{align*}
\end{assume}
Under the non-degeneracy Assumption~\ref{CW:ass:dominance} it is possible to prove that the preimages of the first two local minima $\bfm_1$ and $\bfm_2$ are already metastable sets, which are relevant to capture the slowest time scale of the system.
\begin{prop}\label{CW:cor:rho:metastable}
    Suppose that Assumption~\ref{CW:ass:law} holds.  If $K = 2$ set $\theta = \Delta_1$ and $N_1(h) = 1$.  If $K \geq 3$ assume additionally that Assumption~\ref{CW:ass:dominance} is satisfied.  Then, for $\prob^h$-almost all $h$ and any $c_1 \in (0, \theta)$ there exists $n_0 \equiv n_0(c_1)$ such that for all $n \geq n_0$ there exists $\overbar N < \infty$ such that for all $N \geq N_0(h) \vee N_1(h) \vee \overbar N$ the random field Curie--Weiss model is $\varrho \ldef \me^{-c_1 \beta N}$-metastable with respect to $\cM \ldef \{M_1, M_2\}$, where $M_1 \ldef \order^{-1}(\bfm_1)$ and $M_2 \ldef \order^{-1}(\bfm_2)$.
\end{prop}
\begin{rem}
  Note that under the non-degeneracy condition from Assumption~\ref{CW:ass:dominance} the mesoscopic free energy landscape may still have more than one global minima. Moreover, we believe that the presented technique and especially Lemma~\ref{lemma:mean:diff} can be generalized to the case of several equal high energy barriers $\Delta_1= \Delta_2 = \dots= \Delta_l$ for some $l\geq 2$. This would allow us to drop the above Assumption~\ref{CW:ass:dominance}. In that case, the leading order capacity will be obtained by the effective capacity of the electrical network constructed from all the possibly degenerate leading order energy barriers in the system. For diffusion processes the construction is outlined in \cite[Section 4.5]{Sch12} and the according series and parallel laws for the total capacity are derived. 
\end{rem}
The second main result in this subsection is the application of Theorem~\ref{thm:main:PI} to the random field Curie--Weiss model defined by the random transition probabilities defined~\eqref{CW:Glauber}. 
\begin{thm}\label{CW:thm:EyringKramers}
  Suppose the assumptions of Proposition~\ref{CW:cor:rho:metastable} hold with $\varrho= \me^{-c_1\beta N}$.  Then, $\prob^h$-a.s., for any $c_2 \in (0,c_1/2)$ there exists $n_1 \equiv  n_1(c_1,c_2,\beta, h_{\infty}) < \infty$ such that for any $n \geq n_0 \vee n_1$ there exists $\overbar N < \infty$ such that for all $N \geq N_0(h) \vee N_1(h) \vee \overbar N$ the random field Curie--Weiss model satisfies a Poincar\'e inequality with constant
  \begin{align}\label{CW:eq:EyringKramers:Intro:PI}
    C_{\PI} 
    \;=\; 
    \frac{\mu\pra{ \cS_1 }\, \mu\pra{ \cS_2 } }{\capacity\bra*{M_1,M_2}}\;
    \bra[\Big]{ 1 + O\bra[\big]{ \me^{- c_2 \beta N}} }
  \end{align}
  as well as a logarithmic Sobolev inequality with constant
  \begin{align}\label{CW:eq:EyringKramers:Intro:LSI}
    C_{\LSI}
    \;=\; 
    \frac{C_{\PI} }{\Lambda\bra*{\mu\pra{ \cS_1 }, \mu\pra{ \cS_2 }} }\;
    \bra[\Big]{ 1 + O\bra[\big]{ \me^{- c_2 \beta N}} } .
  \end{align}
\end{thm}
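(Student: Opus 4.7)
The plan is to apply Theorem~\ref{thm:main:PI} with $K=2$ to the random field Curie-Weiss model. Proposition~\ref{CW:prop:rho:metastable} already supplies $\varrho$-metastability with $\varrho=\me^{-c_1\beta N}$ and the pair $\cM=\{M_1,M_2\}=\{\order^{-1}(\bfm_1),\order^{-1}(\bfm_2)\}$, so Assumption~\ref{ass:metastability:sets} is in place. What remains is to check Assumption~\ref{ass:PI+LSI:metastable:sets} (local Poincaré and log-Sobolev constants), Assumption~\ref{ass:regularity} ($\eta$ regularity), and to bound $C_{\text{mass}}$, each with quantitative estimates strong enough that the square-root error $\sqrt{C_{\LSI,\cM}\,C_{\text{mass}}(\varrho+\eta)}$ in \eqref{eq:main:LSI} becomes $O(\me^{-c_2\beta N})$ for any $c_2<c_1/2$ provided $n$ is taken sufficiently large.

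For the local constants inside $M_i=\order^{-1}(\bfm_i)$ I would argue as follows. Within a single level set of $\order$, the Hamiltonian only changes through the \emph{fluctuation} field $\tilde h_i$, so on $M_i$ the restricted Gibbs measure is comparable to the uniform measure on $M_i$ with multiplicative constant $\me^{O(h_\infty\beta N/n)}$, which tends to $1$ in the regime $N\to\infty$, $n\to\infty$. The Glauber chain restricted to $M_i$ decomposes over the blocks $\Lambda_\ell$ into almost-independent symmetric random walks on slices of the hypercube; by a product/tensorization argument together with the Diaconis–Saloff-Coste bound for the slice, this yields $C_{\PI,i}=O(N^2)$ and $C_{\LSI,i}=O(N^2\log N)$, hence $C_{\PI,\cM}$ and $C_{\LSI,\cM}$ grow only polynomially in $N$. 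Similarly, for any $\sigma\in\cS_i$ one has $\mu_i(\sigma)\geq\me^{-O(\beta N)}$, so $C_{\text{mass}}=O(N)$; in particular $C_{\text{mass}}C_{\LSI,\cM}=O(N^3\log N)$, which is negligible against $\me^{c_1\beta N}$.

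The genuinely delicate step is the regularity bound
\begin{equation*}
  \var_{\mu_{M_i}}\!\bigl[\nu_{M_i,M_j}/\mu_{M_i}\bigr]
  \;\leq\;\eta\,\mu[M_i]/\capacity(M_i,M_j),
  \qquad \eta=O(\me^{-c\beta N}).
\end{equation*}
By \eqref{eq:def:LastExitBiasedDistri}, $\nu_{M_i,M_j}/\mu_{M_i}=\mu[M_i]\,e_{M_i,M_j}/\capacity(M_i,M_j)$, so the claim reduces to showing that the equilibrium measure $\sigma\mapsto e_{M_i,M_j}(\sigma)=\prob_\sigma[\tau_{M_j}<\tau_{M_i}]$ is essentially constant on $M_i$, up to a multiplicative factor $1+O(\me^{-c\beta N})$. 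Here I would invoke the coupling of \cite{BBI12}: for any two $\sigma,\sigma'\in M_i$ sharing the same mesoscopic profile there is a coupling under which the two chains merge, with overwhelming probability, on a time scale much shorter than any escape from the valley of $\bfm_i$; hence their hitting distributions of $M_j$ are exponentially close. Averaging over the trivial variation of $\mu_{M_i}$ within $M_i$ (which is also bounded by $\me^{O(h_\infty\beta N/n)}-1$, controllable by choice of $n$) then yields the required $\eta=O(\me^{-c\beta N})$.

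This is the main obstacle: establishing the uniformity of $e_{M_i,M_j}$ across the metastable set needs a genuine probabilistic input (the random-field coupling), whereas the PI/LSI bounds within $M_i$ and the mass control are routine. Once $\eta$ and the local constants are in place, substituting into \eqref{eq:main:PI} and \eqref{eq:main:LSI} and collecting all error contributions $\varrho+\eta=O(\me^{-c_1\beta N})$, $C_{\PI,\cM},C_{\LSI,\cM},C_{\text{mass}}=\mathrm{poly}(N)$, yields the stated expressions \eqref{CW:eq:EyringKramers:Intro:PI} and \eqref{CW:eq:EyringKramers:Intro:LSI} with error $O(\me^{-c_2\beta N})$ for any prescribed $c_2<c_1/2$, after choosing $n\geq n_1(c_1,c_2,\beta,h_\infty)$ large enough to absorb the polynomial-in-$N$ prefactors into the exponential.
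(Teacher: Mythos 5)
Your proposal follows essentially the same route as the paper: verify $\varrho$-metastability via Proposition~\ref{CW:prop:rho:metastable}, establish the regularity Assumption~\ref{ass:regularity} through the coupling construction in the spirit of~\cite{BBI12}, obtain the local Poincaré and logarithmic Sobolev constants by comparison with a product of Bernoulli--Laplace (hypercube-slice) dynamics, bound $C_{\textup{mass}}$, and then feed all this into Theorem~\ref{thm:main:PI}. One quantitative imprecision is worth flagging: the local constants $C_{\PI,\cM}$ and $C_{\LSI,\cM}$ are \emph{not} purely polynomial in $N$. The change of measure from $\mu_{M_i}$ to the uniform measure $\pi_{M_i}$ and the Dirichlet-form comparison between the exchange dynamics and the two-step Glauber kernel each cost a factor $\exp\bra[\big]{O(\beta h_\infty N/n)}$, so the paper's bound (Proposition~\ref{CW:prop:meta_sets:PI_LSI}) is of order $N^3\exp\bra[\big]{2\beta(\eps(n)N+2+2h_\infty)}$ rather than the $O(N^2)$ or $O(N^2\log N)$ you claim. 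This does not break the argument---that residual exponential factor, whose exponent scales like $\beta h_\infty N/n$, is precisely what is tamed by choosing $n\geq n_1(c_1,c_2,\beta,h_\infty)$ large enough---but the role of $n_1$ is to control this $\exp(O(\beta N/n))$ term, not merely to absorb genuinely polynomial prefactors (which are harmless without any constraint on $n$).
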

Let us emphasis that this result is valid in the symmetric ($F(\bfm_1) = F(\bfm_2)$) as well as asymmetric case ($F(\bfm_1) \ne F(\bfm_2)$).  Moreover, the capacities between pairs of metastable sets are calculated asymptotically with explicit error bounds in~\cite{BEGK01, BBI09, BBI12, BdH15}.  Hence, the right-hand side of~\eqref{CW:eq:EyringKramers:Intro:PI} and~\eqref{CW:eq:EyringKramers:Intro:LSI} can be made asymptotically explicit in terms of the free energy~\eqref{eq:CW:F}.

In the asymmetric case $F(\bfm_1) \ne F(\bfm_2)$, we connect the mean hitting time with the Poincar\'e constant via Corollary~\ref{cor:main:PI}.
\begin{cor}
  Suppose that the assumptions of Theorem~\ref{CW:thm:EyringKramers} hold with $\varrho= \me^{-c_1 \beta N}$.  Then, $\prob^h$-a.s., for any $c_2 \in (0, \min\set{c_1/2, F(\bfm_2) - F(\bfm_1)})$
  \begin{align}\label{CW:eq:cor:PI}
    C_{\PI}
    \;=\;
    \mean_{\sigma}\!\big[\tau_{M_1}\big]\;
    \bra[\Big]{1 + O\bra[\big]{\me^{-c_2 \beta N}}},
    \qquad \forall\, \sigma \in M_2.
  \end{align}
\end{cor}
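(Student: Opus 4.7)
The plan is to combine Theorem~\ref{CW:thm:EyringKramers} with Proposition~\ref{prop:mean:hitting:times} to rewrite the Poincaré constant in terms of the averaged mean hitting time $\mean_{\nu_{M_2,M_1}}[\tau_{M_1}]$, and then to promote this average to the pointwise quantity $\mean_\sigma[\tau_{M_1}]$ for an arbitrary $\sigma \in M_2$. Without loss of generality assume $F(\bfm_1) \leq F(\bfm_2)$. Since the induced measure $\magmu$ satisfies a sharp large deviation principle at scale $N$ with rate function $F$, one has $\mu[\cS_2]$ of order $\me^{-\beta N(F(\bfm_2)-F(\bfm_1))}$ up to sub-exponential prefactors; in particular $\mu[\cS_1] = 1 + O(\me^{-c_2\beta N})$ for any $c_2 < F(\bfm_2)-F(\bfm_1)$.

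To apply Proposition~\ref{prop:mean:hitting:times} with $i=2$, $J=\{1\}$, $B = M_1$, observe that $J \cup \{i\} = \{1,2\}$ exhausts the index set, so the $\delta$-hypothesis on remaining valleys is vacuous, while $C_{\textup{ratio}} = \mu[\cS_1]/\mu[\cS_2]$ is of the form $\me^{\Theta(\beta N)}$. With $\varrho = \me^{-c_1\beta N}$, the error term $\varrho\ln(C_{\textup{ratio}}/\varrho) = O(\me^{-c_1\beta N}\beta N)$ is absorbed into $O(\me^{-c_2\beta N})$ for any $c_2 < c_1$. Combining with Theorem~\ref{CW:thm:EyringKramers} and the estimate on $\mu[\cS_1]$,
\begin{align*}
  C_{\PI}
  \;=\;
  \mu[\cS_1]\cdot\frac{\mu[\cS_2]}{\capacity(M_2,M_1)}\,\bra[\big]{1 + O(\me^{-c_2\beta N})}
  \;=\;
  \mean_{\nu_{M_2,M_1}}[\tau_{M_1}]\,\bra[\big]{1 + O(\me^{-c_2\beta N})},
\end{align*}
for every $c_2 < \min\{c_1/2,\, F(\bfm_2)-F(\bfm_1)\}$, which matches the admissible range in the corollary.

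The remaining and main obstacle is to show that $\sigma\mapsto\mean_\sigma[\tau_{M_1}]$ is essentially constant on $M_2$ with the required relative precision. The plan is to use the potential-theoretic representation
\begin{align*}
  \mean_\sigma[\tau_{M_1}]\,\capacity(\sigma,M_1)
  \;=\;
  \sum_{y\in\cS}\mu(y)\,\prob_y[\tau_\sigma < \tau_{M_1}]
\end{align*}
and to compare it term by term with the analogous identity for the whole metastable set $M_2$. By the $\varrho$-metastability, any configuration $y \notin M_1$ that enters $M_2$ does so well before reaching $M_1$ with probability $1 - O(\varrho)$; once inside $M_2$, the chain reaches any prescribed $\sigma \in M_2$ before hitting $M_1$ with probability $1 - O(\varrho)$, where the mixing inside $M_2$ is controlled by the same coupling construction from \cite{BBI12} that was invoked to verify Assumption~\ref{ass:regularity} and Assumption~\ref{ass:PI+LSI:metastable:sets} in the proof of Theorem~\ref{CW:thm:EyringKramers}. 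This yields $\prob_y[\tau_\sigma < \tau_{M_1}] = h_{M_2,M_1}(y)\,(1+O(\varrho))$ uniformly in $y$, and a parallel computation relates $\capacity(\sigma,M_1)/\mu(\sigma)e_{M_2,M_1}(\sigma)$ to $\capacity(M_2,M_1)/\mu[M_2]$. Assembling these estimates gives $\mean_\sigma[\tau_{M_1}] = \mean_{\nu_{M_2,M_1}}[\tau_{M_1}]\,(1 + O(\me^{-c_2\beta N}))$ uniformly for $\sigma \in M_2$, and the corollary follows.

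The hardest point throughout is ensuring that the intrinsic mixing cost inside $M_2$ is genuinely negligible compared to $\mean_\sigma[\tau_{M_1}]$: the restriction $c_2 < c_1/2$ in the hypothesis precisely accommodates the square-root loss inherent in the Poincaré bound \eqref{eq:main:PI}, while $c_2 < F(\bfm_2)-F(\bfm_1)$ is exactly what is needed to control the prefactor $\mu[\cS_1]$ by $1 + O(\me^{-c_2\beta N})$.
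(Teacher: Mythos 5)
Your overall route matches the paper's: first express $C_{\PI}$ in terms of the averaged mean hitting time $\mean_{\nu_{M_2,M_1}}[\tau_{M_1}]$ via Corollary~\ref{cor:main:PI} (equivalently Proposition~\ref{prop:mean:hitting:times} plus Theorem~\ref{CW:thm:EyringKramers}), controlling $C_{\textup{ratio}}^{-1}=\mu[\cS_2]/\mu[\cS_1]$ through the sharp large-deviation behavior of $\magmu$, and then upgrade the average to the pointwise quantity $\mean_\sigma[\tau_{M_1}]$.  Your identification of the two constraints on $c_2$ (the $c_1/2$ barrier from the $\sqrt{\varrho}$ loss in~\eqref{CW:eq:EyringKramers:Intro:PI}, and the $F(\bfm_2)-F(\bfm_1)$ barrier from $\mu[\cS_1]=1+O(C_{\textup{ratio}}^{-1})$) is also correct.

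The one place where the proposal diverges from the paper is the final, and genuinely hard, step — passing from $\mean_{\nu_{M_2,M_1}}[\tau_{M_1}]$ to $\mean_\sigma[\tau_{M_1}]$ uniformly in $\sigma\in M_2$.  The paper simply cites~\cite[Theorem~1.1]{BBI12}, which delivers this pointwise asymptotics as an external input, together with~\cite[Eq.~(3.16)]{BBI09} for the size of $C_{\textup{ratio}}$.  You instead sketch a self-contained argument via the representation $\mean_\sigma[\tau_{M_1}]\,\capacity(\sigma,M_1)=\sum_{y}\mu(y)\,\prob_y[\tau_\sigma<\tau_{M_1}]$ and a term-by-term comparison with the analogous formula for $M_2$.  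The sketch is in the right spirit — this is essentially how~\cite{BBI12} proceed — but as written it has a gap: the claim that $\prob_y[\tau_\sigma<\tau_{M_1}]=h_{M_2,M_1}(y)\bigl(1+O(\varrho)\bigr)$ \emph{uniformly in $y$} (and with precision of order exactly $\varrho$) is not a consequence of anything established in the paper; the coupling estimates of Section~\ref{sec:RFCW} (Lemma~\ref{CW:lem:HitProb}) compare hitting probabilities only between preimages of mesoscopic sets, not between a single configuration $\sigma$ and the full fiber $M_2$.  Establishing the required uniform control of the microscopic harmonic function at a single point is precisely the content of the coupling machinery in~\cite[Theorem~1.1]{BBI12}, so the clean thing to do is cite it, as the paper does, rather than re-derive it informally.
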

\begin{proof}
  In view of \cite[equation~(3.16)]{BBI09}, $C_{\textup{ratio}}^{-1} \ldef \mu[\cS_2] / \mu[\cS_1] = O\bra{\me^{-\beta \delta N}}$ for any $\delta \in (0, F(\bfm_2) - F(\bfm_1))$. Thus, \eqref{CW:eq:cor:PI} is an immediate consequence of Corollary~\ref{cor:main:PI} and \cite[Theorem~1.1]{BBI12}
\end{proof}
Finally, notice that sharp asymptotics of the mean hitting time including the precise prefactor has been establish in \cite{BBI09}, which by the above identification gives an asymptotic sharp formula for the Poincar\'e constant of the random field Curie Weiss model.

\section{Functional inequalities}
\label{sec:FI}
The results in this section consider functional inequalities which do not make any explicit reference to time. Therefore, the results hold in the more general setting of $L$ as defined in~\eqref{eq:generator} being the generator of a continuous time Markov chain on a countable state space $\cS$. This accounts to dropping the normalization condition $\sum_{y} p(x,y)=1$ and assuming $p(x,y)$ to be the elements of the infinitesimal generator satisfying
\begin{align*}
  \forall\, x \ne y : p(x,y) \geq 0 ,\quad \forall x:0 \leq - p(x,x) < \infty  \quad\text{and}\quad\sum_{y\in \cS} p(x,y)  = 0  .
\end{align*}
We refer to~\cite[Chapter 7.2.2]{BdH15} for the general relation of hitting times between discrete time and continuous time Markov chains.

\subsection{Capacitary inequality}
%
%
%
%
The capacitary inequality is a generalization of the co-area formula.  For Sobolev functions on $\R^d$ it has been first proven by Maz'ya in \cite{Ma72}.  For a comprehensive treatment of the continuous case with further applications we refer to \cite{BGL14,BCR06,BR03,Chen05,Chen06,Ma11}.
\begin{thm}[Capacitary inequality]\label{thm:CapInequ}
  For any $f \in \ell^2(\mu)$ and any $t \in [0, \infty)$, let $A_t$ be the super level-set of $f$, that is
  \begin{align}\label{eq:def:A_t}
    A_t
    \;\ldef\;
    \big\{
      x \in \cS \;:\; |f(x)| > t
    \big\}.
  \end{align}
  Let $B \subset \cS$ be non-empty, then for any function $f\!: \cS \to \R$ with $f|_B \equiv 0$ it holds that
  \begin{align}\label{eq:CapacitaryInequ:sharp}
    \int_{0}^{\infty} 2t\, \capacity(A_{t}, B)\, \dx{t}
    \;\leq\;
    4\, \cE(f).
  \end{align}
\end{thm}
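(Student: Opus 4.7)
The plan is to adapt Maz'ya's classical capacitary inequality to this discrete reversible setting via a dyadic truncation of $f$ combined with the Dirichlet principle.

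I would first reduce to the case $f \geq 0$: the super-level sets $A_t$ depend only on $\abs{f}$, and the pointwise inequality $\abs{\abs{f(x)} - \abs{f(y)}} \leq \abs{f(x) - f(y)}$ gives $\cE(\abs{f}) \leq \cE(f)$, while $\abs{f}$ still vanishes on $B$. It therefore suffices to prove the inequality for non-negative $f$.

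Next I would introduce the dyadic shell decomposition
\[
  f_k \;\ldef\; (f \wedge 2^k) - (f \wedge 2^{k-1}), \qquad k \in \Z.
\]
These shells telescope to $f$, take values in $[0, 2^{k-1}]$, satisfy $f_k \equiv 2^{k-1}$ on $A_{2^k}$, and vanish on $B$ (since $f|_B = 0$). Consequently $f_k/2^{k-1}$ is admissible in the Dirichlet principle~\eqref{eq:DirichletPrinciple} for the pair $(A_{2^k}, B)$, giving $\capacity(A_{2^k}, B) \leq 4^{1-k}\, \cE(f_k)$.

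The key technical step is the shell inequality $\sum_{k \in \Z} \cE(f_k) \leq \cE(f)$. Since $f \geq 0$, each map $f \mapsto f_k$ is non-decreasing, so on every edge $\{x, y\}$ all the differences $f_k(x) - f_k(y)$ share a common sign; combined with $\sum_k (f_k(x) - f_k(y)) = f(x) - f(y)$ this yields the pointwise bound
\[
  \sum_{k \in \Z} \bigl(f_k(x) - f_k(y)\bigr)^2 \;\leq\; \bigl(f(x) - f(y)\bigr)^2,
\]
and weighted summation then gives the shell estimate. To finish, I would combine this with the monotonicity of $t \mapsto \capacity(A_t, B)$ via a dyadic layer-cake,
\[
  \int_0^\infty 2t\, \capacity(A_t, B)\, \dx t
  \;\leq\; \sum_{k \in \Z} (4^k - 4^{k-1})\, \capacity(A_{2^{k-1}}, B),
\]
plug in the Dirichlet estimate (with the index shifted by one), and apply the shell inequality.

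The main obstacle is obtaining the sharp constant $4$: the naive dyadic bookkeeping sketched above produces a suboptimal constant on the order of $12$. I expect that tightening this to $4$ requires a refinement such as passing to a continuous family of truncations $(f-t)_+$ and integrating by parts in $t$ to exploit the monotonicity of $t \mapsto \capacity(A_t, B)$, along the lines of Maz'ya's original continuous argument. All interchanges of sum and integral are justified by non-negativity.
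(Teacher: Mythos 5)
Your dyadic-truncation approach is a legitimate route to a capacitary inequality, but as you yourself note it only produces a constant on the order of $12$, not the sharp constant $4$ that the statement claims. Since you do not carry out the refinement you gesture at (passing to a continuous family of truncations), the proposal as written does not prove the stated inequality; this is a genuine gap. Everything up to that point is sound: the reduction to $f\geq 0$, the shell decomposition $f_k=(f\wedge 2^k)-(f\wedge 2^{k-1})$, the admissibility of $f_k/2^{k-1}$ in the Dirichlet principle for $\capacity(A_{2^k},B)$, and the shell inequality $\sum_k\cE(f_k)\leq\cE(f)$ (which indeed follows from the common-sign argument on each edge). The loss of a factor $3$ comes from the crude layer-cake bound $\int_{2^{k-1}}^{2^k}2t\,\dx t=3\cdot 4^{k-1}$ compared to what one would get from matching the truncation level to the integration variable continuously.

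The paper's proof is genuinely different and self-closing, which is how it obtains the constant $4$ without any dyadic bookkeeping. It tests the level-set capacity against $f$ itself: since $-L h_{A_t,B}$ is supported on $A_t\cup B$, $f|_B\equiv 0$ and $f>t$ on $A_t$, one has $t\,\capacity(A_t,B)\leq\skp{-Lh_{A_t,B},f}_\mu$. Integrating, writing the pairing as a sum over edges, and applying Cauchy--Schwarz with weight $\mu(x)p(x,y)$ gives an estimate of the form $I\leq 2\,\cE(f)^{1/2}\,J^{1/2}$ where $J$ involves $\bigl(\int_0^\infty(h_t(x)-h_t(y))\,\dx t\bigr)^2$. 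The identity $\bigl(\int_0^\infty g\bigr)^2=2\int_0^\infty\!\int_0^t g(t)g(s)\,\dx s\,\dx t$ together with the nesting $A_t\subset A_s$ for $t\geq s$ (which gives $\skp{-Lh_t,h_s}_\mu=\capacity(A_t,B)$) shows that $J$ equals $I$ again. Solving $I\leq 2\cE(f)^{1/2}I^{1/2}$ gives $I\leq 4\cE(f)$. If you want to complete your proof, this is the continuous argument you would need to implement; the dyadic version alone cannot reach $4$.
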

\begin{proof}
  Due to the fact that $\cE(|f|) \leq \cE(f)$, let us assume without lost of generality that $f(x) \geq 0$ for all $x \in \cS$.  To lighten notation, for any $t \in [0, \infty)$, we denote by $h_t := h_{A_{t},B}$ the equilibrium potential as defined in \eqref{eq:equi:potential}.  Since $\supp L h_t \subset A_t \cup B$, $f|_B \equiv 0$ and $f|_{A_t} >t$, it follows that
  \begin{align*}
    t\, \capacity(A_t, B)
    &\;\leq\;
    \skp{-L h_t,f}_{\mu}
    \;=\;
    \frac{1}{2}\, \sum_{x, y \in \cS}\, \mu(x)\, p(x,y)\,
    \big( f(x) - f(y) \big) \big( h_{t}(x) - h_{t}(y) \big) .
  \end{align*}
  An application of the Cauchy--Schwarz inequality yields
  \begin{align}\label{eq:cap:ineq:est}
    &\int_{0}^{\infty} 2t\, \capacity(A_{t}, B)\, \dx{t}
    \nonumber\\[-1ex]
    &\mspace{36mu}\leq\;
    2\, \cE(f)^{1/2}\,
    \Bigg(
      \frac{1}{2}\, \sum_{x,y \in \cS} \mu(x)\, p(x,y)\,
      \bigg(
        \int_{0}^{\infty} \big( h_{t}(x) - h_{t}(y) \big)\, \dx{t}
      \bigg)^{\!2}
    \Bigg)^{\!1/2}.
  \end{align}
  Now, we use the following identity: for any function $g \in L^{\!1}([0,\infty))$ holds
  \begin{align*}
    \Bigg(
      \int_{0}^{\infty} g(t) \, \dx{t}
    \Bigg)^{\!2}
    = \int_0^\infty \int_0^\infty g(t)\; g(s) \; \dx{s}\, \dx{t}
    = 2 \int_0^\infty  \int_0^t g(t)\; g(s) \; \dx{s}\, \dx{t} .
  \end{align*}
  Thus, by rewriting the right-hand side of \eqref{eq:cap:ineq:est}, we find that
  \begin{align*}
    &\int_{0}^{\infty} 2t\, \capacity(A_{t}, B)\, \dx{t}
    \;\leq\;
    2\, \cE(f)^{1/2}\,
    \bigg(
      2\,\int_{0}^{\infty} \int_0^{t} \skp{-Lh_t,h_s}_{\mu}\, \dx{s}\, \dx{t}
    \bigg)^{\!\!1/2}.
  \end{align*}
  Finally, since $A_t \subset A_s$ for all $t \geq s$, we obtain that $\skp{-Lh_t,h_s}_{\mu} = \capacity(A_t, B)$.  Hence, the assertion of the theorem follows.
\end{proof}

\subsection{Orlicz--Birnbaum estimates}
Let us assume for a moment that for some constant $C_{\CI}>0$ a measure-capacity comparison of the form $C_{\CI} \capacity(A,B) \geq \mu[A]$ is valid for all $A\subset \cS \setminus B$.  Then, a combination of the capacitary inequality~\eqref{eq:CapacitaryInequ:sharp} with
\begin{align*}
  \Mean_{\mu}\pra*{ f^2 }
  \;=\;
  \int_0^\infty 2 t \, \mu[A_t] \; \dx{t},
\end{align*}
leads to $\Mean_{\mu}\pra{f^2} \leq 4 C_{\CI} \, \cE(f)$ for all $f$ with $f|_B \equiv 0$.  This observation, originally given in~\cite{Ma72}, provides estimates on the Dirichlet eigenvalue of the generator $L$.

This strategy can be generalized to the $\ell^p$ case and more generally to logarithmic Sobolev constants by introducing suitable Orlicz spaces.  In the sequel, we prove that Poincar\'e inequalities in Orlicz spaces are equivalent to certain measure-capacity inequalities.  Similar results for diffusion processes on $\R^d$ can be found in \cite[Chapter~8]{BGL14}.
\begin{defn}[{Orlicz space~\cite[Section~1.3]{RR02}}]
  A function $\Phi\!: [0,\infty) \to [0, \infty]$ is a \emph{Young function} if it is convex, $\Phi(0) = \lim_{r \to 0} \Phi(r) = 0$ and $\lim_{r \to \infty} \Phi(r) = \infty$.  The \emph{Legendre-Fenchel dual} $\Psi\!: [0, \infty) \to [0, \infty]$ of a Young function $\Phi$ defined by
  \begin{align*}
    \Psi(r) \;=\; \sup_{s\in [0,\infty]} \big\{ s r - \Phi(s) \big\}
  \end{align*}
  is again a Young function (cf.~Lemma~\ref{lem:YoungFunctions}), and the pair $(\Phi, \Psi)$ is called \emph{Legendre-Fenchel} pair.  For some $K > 0$ the Orlicz-norm of a function~$f \in \ell^1(\mu)$ is defined~by
  \begin{align}\label{eq:def:K-OrliczNorm}
    \norm{f}_{\Phi, \mu, K}
    \;\ldef\;
    \sup\big\{
    \Mean_{\mu}\!\big[ |f| g \big]
      \,:\, g \geq 0,\; \Mean_{\mu}\!\big[ \Psi(g) \big] \leq K
    \big\}.
  \end{align}
  We set $\norm{f}_{\Phi, \mu} \ldef \norm{f}_{\Phi, \mu, 1}$.  The space of \emph{Orlicz functions}, $\ell^{\Phi}(\mu, K)\subset \ell^1(\mu)$, is the set of summable functions $f$ on $\cS$ with finite Orlicz norm.
\end{defn}
\begin{lem}\label{lemma:est:IndOrlicz}
  For any $A \subset \cS$ holds
  \begin{align}\label{eq:est:IndOrlicz}
    \norm*{\dsOne_{A}}_{\Phi,\mu,K}
    \;=\;
    \mu[A]\, \Psi^{-1}\bra*{\frac{K}{\mu[A]}} ,
  \end{align}
  where $\Psi^{-1}(t) \ldef \inf \big\{ s\in [0,\infty] : \Psi(s) > t \big\}$.
\end{lem}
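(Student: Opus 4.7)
The plan is to prove the two inequalities in \eqref{eq:est:IndOrlicz} separately: the upper bound via Jensen's inequality, and the lower bound by exhibiting an explicit maximizer that is constant on $A$.

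\textbf{Upper bound.} For any admissible test function $g\geq 0$ with $\Mean_\mu[\Psi(g)] \leq K$, only the values of $g$ on $A$ contribute to $\Mean_\mu[\dsOne_A g]$. Writing $\Mean_\mu[\dsOne_A g] = \mu[A]\cdot \Mean_{\mu_A}[g]$ and applying Jensen's inequality to the convex function $\Psi$ with respect to the probability measure $\mu_A$ yields
\begin{align*}
  \Psi\bra[\big]{\tfrac{1}{\mu[A]}\, \Mean_\mu[\dsOne_A g]}
  \;\leq\;
  \tfrac{1}{\mu[A]}\, \Mean_\mu[\dsOne_A \Psi(g)]
  \;\leq\;
  \tfrac{1}{\mu[A]}\, \Mean_\mu[\Psi(g)]
  \;\leq\;
  \tfrac{K}{\mu[A]}.
\end{align*}
Since $\Psi$ is nondecreasing (being a Young function), the relation $\Psi(s) \leq t$ together with the definition $\Psi^{-1}(t) = \inf\{s : \Psi(s) > t\}$ forces $s \leq \Psi^{-1}(t)$. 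Applied to $s = \Mean_\mu[\dsOne_A g]/\mu[A]$ and $t = K/\mu[A]$, this gives $\Mean_\mu[\dsOne_A g] \leq \mu[A] \, \Psi^{-1}(K/\mu[A])$, and taking the supremum over $g$ produces the ``$\leq$'' direction.

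\textbf{Lower bound.} I construct an explicit maximizer by taking $g_{\star} := c\, \dsOne_A$ with $c \ldef \Psi^{-1}(K/\mu[A])$. The definition of $\Psi^{-1}$ yields $\Psi(s) \leq K/\mu[A]$ for every $s < c$; letting $s \uparrow c$ and invoking the left-continuity of the convex function $\Psi$ on the interior of its effective domain gives $\Psi(c) \leq K/\mu[A]$. Consequently $\Mean_\mu[\Psi(g_\star)] = \mu[A]\,\Psi(c) \leq K$, so $g_\star$ is admissible, and $\Mean_\mu[\dsOne_A g_\star] = \mu[A]\, c$ realizes the upper bound.

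\textbf{Main subtlety.} The only delicate point is the handling of the generalized inverse $\Psi^{-1}$ when $\Psi$ is not strictly increasing or has a jump at an endpoint of its effective domain. The key fact I rely on is $\Psi(\Psi^{-1}(t)) \leq t$ for $t$ in the effective range of $\Psi$, which follows from left-continuity of convex functions on the interior of their domain. Degenerate cases, namely $\mu[A]=0$ (both sides trivially vanish under the convention $0\cdot\infty = 0$) and $\Psi^{-1}(K/\mu[A]) = \infty$ (which forces both sides to equal $+\infty$ via a truncation argument $g_n = n \dsOne_A$ applied with $\Psi(n)\leq K/\mu[A]$ eventually failing, showing the sup is $+\infty$), should be disposed of at the outset to allow the main argument to proceed under $c \in (0,\infty)$.
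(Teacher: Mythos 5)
Your lower bound (via the explicit maximizer $g_\star = \Psi^{-1}(K/\mu[A])\,\dsOne_A$) is the same as the paper's, but the upper bound takes a genuinely different route: you apply Jensen to the \emph{convex} function $\Psi$ to get $\Psi\bigl(\Mean_{\mu_A}[g]\bigr) \le K/\mu[A]$ and then translate this into a bound on $\Mean_{\mu_A}[g]$ via monotonicity of the pseudo-inverse, whereas the paper inserts $g \le (\Psi^{-1}\circ\Psi)(g)$ and applies Jensen to the \emph{concave} function $\Psi^{-1}$. Both are valid; yours has the advantage of not needing the concavity of $\Psi^{-1}$, which the paper must establish separately in Lemma~\ref{lem:YoungFunctions}. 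One small correction in your lower bound: you justify $\Psi(\Psi^{-1}(t)) \le t$ by appealing to left-continuity of convex functions on the interior of their effective domain, but the delicate case is precisely when $c = \Psi^{-1}(t)$ is the right endpoint of the effective domain (for the pair $(\Phi_1, \Psi_1)$ of Example~\ref{ex:OrliczPairs} one always has $c = 1$, with $\Psi_1$ jumping to $+\infty$ immediately beyond). What you actually need, and what holds automatically because $\Psi$ is a Legendre--Fenchel dual and hence a supremum of affine functions, is that $\Psi$ is lower semicontinuous; for a non-decreasing function this forces left-continuity at every point, boundary included, which closes the gap. The paper's lower-bound step tacitly relies on the very same admissibility fact, so you are not doing extra work. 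Finally, the caveat about $\Psi^{-1}(K/\mu[A]) = \infty$ is moot: the Young-function requirement $\lim_{r\to\infty}\Psi(r) = \infty$ forces $\Psi^{-1}(t) < \infty$ for every finite $t$, so that case cannot occur once $\mu[A] > 0$.
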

\begin{proof}
  Due to the variational definition of the Orlicz norm, by choosing $g(x) = \dsOne_A(x)\, \Psi^{-1}(K/\mu[A])$ we have that $\norm*{\dsOne_{A}}_{\Phi,\mu,K} \geq \mu[A]\, \Psi^{-1}\bra*{K/\mu[A]}$.  On the other hand, since $\Psi^{-1}$ is concave (cf.~Lemma~\ref{lem:YoungFunctions}), an application of Jensen's inequality yields
  \begin{align*}
    \Mean_{\mu}\big[\dsOne_A\, g\big]
    \;=\;
    \mu[A]\,
    \Mean_{\mu}\bigg[ \frac{\dsOne_A}{\mu[A]}\, (\Psi^{-1}\circ \Psi)(g)\bigg]
    \;\leq\;
    \mu[A]\,
    \Psi^{-1}\bigg(%
      \frac{1}{\mu[A]}\, \Mean_{\mu}\big[\dsOne_A\, \Psi(g)\big]
    \bigg).
  \end{align*}
  Taking finally the supremum over all $g$ with $\Mean_\mu[\Psi(g)] \leq K$ concludes the proof.
\end{proof}
\begin{ex}\label{ex:OrliczPairs}
  The following Legendre-Fenchel pairs are stated for later reference:
  \begin{itemize}
  \item[a)] For $p \in (1, \infty)$: $(\Phi_p(r), \Psi_p(r)) \ldef \big( \frac{1}{p} r^p , \frac{1}{p^*} r^{p^*} \big)$ with $1 / p + 1 / p^* = 1$ the resulting Orlicz norm is equivalent to the usual $\ell^p(\mu)$ spaces.  The limiting pair $p \to 1$ is given by $\Phi_1(r) = r$ and $\Psi_1\!: [0, \infty) \to [0, \infty]$ with
    \begin{align*}
      \Psi_1(r)
      \;=\;
      \begin{cases}
        0,          & r \leq 1 \\
        \infty,     & r > 1
      \end{cases}
      \quad\text{and hence}\quad
      \Psi_1^{-1}(r)
      \;=\;
      \begin{cases}
        0 , & r = 0 \\
        1 , & r > 0
      \end{cases}.
    \end{align*}
  \item[b)] $(\Phi_{\Ent}(r), \Psi_{\Ent}(r)) \ldef \big(\dsOne_{[1,\infty)}(r)\bra*{r \ln r - r + 1} , \me^r - 1\big)$ leads to a norm, which can be compared with the relative entropy
    \begin{align*}
      \forall f\!: \cS \to \R_+,
      \qquad
      \Ent_{\mu}[f] \;\leq\; \norm*{f}_{\Phi_{\Ent},\mu} .
    \end{align*}
    Indeed, by using the variational characterization of the entropy, we have
    \begin{align*}
      \Ent_{\mu}[f]
      &\;=\;
      \sup_g\big\{
        \Mean_{\mu}\!\big[f g\big]
        \,:\,
        \Mean_{\mu}\!\big[ \me^g \big] \leq 1
      \big\}
      \\
      &\;=\;
      \sup_h\big\{
        \Mean_{\mu}\!\big[f \ln \big( \me^h -1\big)\big]
        \,:\,
        h \geq 0,\; \Mean_{\mu}\!\big[\me^h - 1\big] \leq 1
      \big\}
      \;\leq\;
      \norm*{f}_{\Phi_{\Ent},\mu} ,
    \end{align*}
    where the last step follows from~\eqref{eq:def:K-OrliczNorm} by noting that $\ln(\me^h -1) \leq h$.
  \end{itemize}
\end{ex}
\begin{prop}[Orlicz--Birnbaum estimates]\label{prop:OrliczBirnbaum}
  Let $B \subset \cS$ and $\nu \in \cP(\cS)$. Then, for any Legendre-Fenchel pair $(\Phi,\Psi)$ there exist constants $C_\Phi, C_\Psi > 0$ satisfying
  \begin{align*}
    C_{\Psi} \;\leq\; C_{\Phi} \;\leq\; 4\, C_{\Psi} ,
  \end{align*}
  such that the following statements are equivalent:
  \begin{enumerate}[label=(\roman{enumi})]
  \item For all sets $A\subset \cS\setminus B$ the measure-capacity inequality holds
    \begin{align}\label{eq:def:MeasureCapInequ}
      \nu[A]\, \Psi^{-1}\bra*{\frac{K}{\nu[A]}}
      \;\leq\;
      C_{\Psi}\, \capacity(A, B) .
    \end{align}
  \item For all $f\!: \cS \to \R$ such that $f \in \ell^2(\mu)$ and $f|_{B} \equiv 0$, it holds that
    \begin{align}\label{eq:def:PhiPoincare}
      \norm*{ f^2 }_{\Phi,\nu,K}
      \;\leq\;
      C_{\Phi}\, \cE(f).
    \end{align}
  \end{enumerate}
\end{prop}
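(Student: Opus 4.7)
\emph{Plan.} The plan is to establish the two implications independently, both reducing to Orlicz norms of indicator functions via a layer-cake decomposition, and then combining Lemma~\ref{lemma:est:IndOrlicz} with the capacitary inequality of Theorem~\ref{thm:CapInequ}.

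For the direction (i)~$\Rightarrow$~(ii), fix $f \in \ell^2(\mu)$ with $f|_B \equiv 0$ and use the layer-cake representation $f^2 = \int_0^\infty 2t\, \one_{A_t}\, \dx t$ with $A_t$ as in~\eqref{eq:def:A_t}. For any $g \geq 0$ admissible for the Orlicz norm, i.e.\ with $\Mean_\nu[\Psi(g)] \leq K$, Fubini's theorem yields
\[
  \Mean_\nu[f^2\, g]
  \;=\;
  \int_0^\infty 2t\, \Mean_\nu[\one_{A_t}\, g]\, \dx t
  \;\leq\;
  \int_0^\infty 2t\, \norm{\one_{A_t}}_{\Phi,\nu,K}\, \dx t,
\]
since the same $g$ is admissible at every level. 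Taking the supremum over $g$ transfers this subadditivity to $\norm{f^2}_{\Phi,\nu,K}$. Combining Lemma~\ref{lemma:est:IndOrlicz} with hypothesis~(i) gives $\norm{\one_{A_t}}_{\Phi,\nu,K} = \nu[A_t]\, \Psi^{-1}(K/\nu[A_t]) \leq C_\Psi\, \capacity(A_t, B)$, where $A_t \subset \cS \setminus B$ because $f$ vanishes on $B$. Plugging this into the displayed bound and applying the capacitary inequality~\eqref{eq:CapacitaryInequ:sharp} produces $\norm{f^2}_{\Phi,\nu,K} \leq 4\, C_\Psi\, \cE(f)$, which is~(ii) with $C_\Phi \leq 4\, C_\Psi$.

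For the direction (ii)~$\Rightarrow$~(i), test~\eqref{eq:def:PhiPoincare} against the equilibrium potential $h_{A,B}$ defined by~\eqref{eq:equi:potential} for a given $A \subset \cS \setminus B$. By the maximum principle $h_{A,B}$ takes values in $[0,1]$ and equals~$1$ on~$A$, hence $h_{A,B}^{\,2} \geq \one_A$ pointwise. The supremum representation of $\norm{\cdot}_{\Phi,\nu,K}$ immediately yields its monotonicity on non-negative functions, so together with $\cE(h_{A,B}) = \capacity(A,B)$ from~\eqref{eq:capacity:def} and Lemma~\ref{lemma:est:IndOrlicz},
\[
  \nu[A]\, \Psi^{-1}\!\bra*{K/\nu[A]}
  \;=\;
  \norm{\one_A}_{\Phi,\nu,K}
  \;\leq\;
  \norm{h_{A,B}^{\,2}}_{\Phi,\nu,K}
  \;\leq\;
  C_\Phi\, \capacity(A,B),
\]
which is precisely~(i) with $C_\Psi \leq C_\Phi$.

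\emph{Main obstacle.} The argument is essentially mechanical once Theorem~\ref{thm:CapInequ} is available, and the factor~$4$ in $C_\Phi \leq 4\, C_\Psi$ is inherited verbatim from~\eqref{eq:CapacitaryInequ:sharp}. The only genuinely analytic step is interchanging the supremum defining the Orlicz norm with the layer-cake integral; non-negativity of all integrands makes this a direct application of Fubini, so no truncation or approximation of $f$ is needed.
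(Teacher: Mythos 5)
Your proof is correct and follows essentially the same route as the paper: layer-cake decomposition of $f^2$, Tonelli/Fubini to pull the Orlicz supremum through, Lemma~\ref{lemma:est:IndOrlicz} to evaluate $\norm{\one_{A_t}}_{\Phi,\nu,K}$, the measure-capacity hypothesis, and then Theorem~\ref{thm:CapInequ} for (i)~$\Rightarrow$~(ii); and monotonicity of the Orlicz norm applied to a $[0,1]$-valued test function for (ii)~$\Rightarrow$~(i). The only cosmetic differences are that you plug in the minimizer $h_{A,B}$ directly instead of invoking the Dirichlet principle~\eqref{eq:DirichletPrinciple}, and you correctly observe that Tonelli's theorem for non-negative integrands makes the paper's intermediate restriction to finitely supported $f$ followed by dominated convergence unnecessary.
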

\begin{proof}
  (i) $\Rightarrow$ (ii):  Let $G_{\Psi, K} \ldef \set*{g : g \geq 0, \Mean_{\nu}[\Psi(g)] \leq K}$.  For $f \in \ell^2(\mu)$ with finite support let $A_t$ be the super-level set of $f$ as defined in \eqref{eq:def:A_t}.  Then,
  \begin{align*}
    \norm*{f^2}_{\Phi, \nu, K}
    \overset{\!\eqref{eq:def:K-OrliczNorm}\!}{\;=\;}
    \sup_{g \in G_{\Psi,K}} \Mean_{\nu}\!\big[f^2 g\big]
    \;\leq\;
    \int_0^\infty
      \mspace{-6mu}
      2t \sup_{g \in G_{\Psi,K}} \Mean_{\nu}\!\big[ g\, \dsOne_{A_t}\big]\,
    \dx{t}
    \;=\;
    \int_0^\infty \mspace{-6mu} 2\, \norm*{ \dsOne_{A_t}}_{\Phi,\nu,K}\, \dx{t}.
  \end{align*}
  Thus, an application of Lemma~\ref{lemma:est:IndOrlicz} and Theorem~\ref{thm:CapInequ} yields
  \begin{align*}
    \norm*{f^2}_{\Phi,\nu,K}
    &\overset{\!\eqref{eq:est:IndOrlicz}\!}{\;=\;}
    \int_0^{\infty} 
      \mspace{-6mu} 2\, \nu[A_t]\, \Psi^{-1}\bra*{\frac{K}{\nu[A_t]}}\,
    \dx{t}
    \overset{\!\eqref{eq:def:MeasureCapInequ}\!}{\;\leq\;}
    C_{\Psi}
    \int_0^\infty \mspace{-6mu} 2\, \capacity(A_t, B)\, \dx{t}
    \overset{\!\eqref{eq:CapacitaryInequ:sharp}\!}{\;\leq\;}
    4\, C_{\Psi}\, \cE(f).
  \end{align*}
  The case $f\in \ell^2(\mu)$ follows from dominated convergence, since $\cE(f) \leq \norm{f}_{\ell^2(\mu)}^2$.

  \noindent (ii) $\Rightarrow$ (i): Since $\cE(f)\leq \norm{f}_{\ell^2(\mu)}^2$, we get from~\eqref{eq:def:PhiPoincare}, that $f^2 \in \ell^{\Phi}(\nu,K)$. Hence, for any $f\in \ell^2(\mu)$ with $f|_A \equiv 1 $ and $f|_{B} \equiv 0$ it holds that
  \begin{align*}
    \nu[A]\, \Psi^{-1}\bra*{\frac{K}{\nu[A]}}
    \overset{\eqref{eq:est:IndOrlicz}}{\;=\;}
    \norm*{\dsOne_A}_{\Phi,\nu,K}
    \;\leq\;
    \norm*{f^2}_{\Phi,\nu,K}
    \overset{\eqref{eq:def:PhiPoincare}}{\;\leq\;}
    C_{\Phi}\, \cE(f),
  \end{align*}
  which, by the Dirichlet principle~\eqref{eq:DirichletPrinciple}, leads to~\eqref{eq:def:MeasureCapInequ}.
\end{proof}
\begin{rem}
  Let us note, that either estimate~\eqref{eq:def:MeasureCapInequ} or~\eqref{eq:def:PhiPoincare} of Proposition~\ref{prop:OrliczBirnbaum} implies $\nu \ll \mu$ on $\cS \setminus B$ with bounded density. Indeed, for $x \in \cS \setminus B$ choose the function $\cS \ni y \mapsto \one_{x}(y)$ as a test function in the Dirichlet principle~\eqref{eq:DirichletPrinciple} and apply~\eqref{eq:def:MeasureCapInequ}. The same estimate can be obtained from~\eqref{eq:def:PhiPoincare} by considering again $\cS \ni y \mapsto \one_{x}(y)$ and using the representation~\eqref{eq:est:IndOrlicz}. In both cases, we get that, for any $x\in \cS\setminus B$,
  \begin{align*}
    \nu(x)
    \;\leq\;
    \frac{C}{\Psi^{-1}\bra[\big]{K/\nu(x)}}
    \;\leq\;
    \frac{C}{\Psi^{-1}(K)}\; \mu(x),
  \end{align*}
  where we used the monotonicity of $[0, \infty) \ni r \mapsto \Psi^{-1}(r)$. Hereby, $C$ is either $C_\Psi$ or $C_\Phi$. Hence, $\nu \ll \mu$ and, therefore, $\ell^1(\mu) \subseteq \ell^1(\nu)$.
\end{rem}
\begin{rem}
  The result of Proposition~\ref{prop:OrliczBirnbaum} is a generalization of the Muckenhoupt criterion~\cite{Mu72} for weighted Hardy inequalities, which was translated to the discrete setting in~\cite{Mi99} for the particular case $\cS=\N_0$.  The statement is, that for any $\nu, \mu \in \cP(\N_0)$ and any $f\!: \set*{-1} \cup \N_0 \to \R$ with $f(0)=f(-1)=0$ the inequality
  \begin{align}\label{eq:MuckenhouptPoincare}
    \sum_{x \geq 0}\, \nu(x)\, f(x)^2
    \;\leq\;
    C_1 \sum_{x \geq 0}\, \mu(x)\, \big( f(x+1) - f(x) \big)^2
  \end{align}
  holds if and only if
  \begin{align*}
    C_2
    \;=\;
    \sup_{x \geq 1} \bra*{\sum_{y=0}^{x-1} \frac{1}{\mu[y]}} \sum_{y\geq x} \nu[y]
    \;<\;
    \infty .
  \end{align*}
  In this case the constants satisfy $C_2\leq C_1 \leq 4 C_2$.  This results can be deduced from Proposition~\ref{prop:OrliczBirnbaum} by using the Orlicz-pair $(\Phi_1,\Psi_1)$ from Example~\ref{ex:OrliczPairs}~a) and setting $B = \set*{0}$. Then~\eqref{eq:def:PhiPoincare} becomes~\eqref{eq:MuckenhouptPoincare} for the (continuous time) generator
  \begin{align*}
    (L f)(x)
    \;=\;
    \bra*{f(x+1)-f(x)} \,+\, \frac{\mu(x-1)}{\mu(x)}\, \bra*{f(x-1)-f(x)}
  \end{align*}
  and therefore $C_{\Phi_1}=C_1$.  Notice that the equilibrium potential and hence the capacity along a one-dimensional, cycle-free path can be calculated explicitly (see e.g.\ \cite[Section 7.1.4]{BdH15}).  In particular, for any $x \in \N$ the solution $h_{x,0} \equiv h_{\{x, \ldots, \infty\}, \{0\}}$ of the boundary value problem~\eqref{eq:equi:potential} on $\N_0$ is given by
  \begin{align*}
    h_{x,0}(y)
    \;=\;
    \left. 
      \sum_{z=y}^{x-1} \frac{1}{\mu(z)} 
    \right/ 
    \sum_{z=0}^{x-1} \frac{1}{\mu(z)} 
    \qquad\text{and}\qquad 
    \capacity\big(\{x, \ldots, \infty\},\{0\}\big) 
    \;=\; 
    \bra*{\sum_{z=0}^{x-1} \frac{1}{\mu(z)}}^{\!\!-1}\mspace{-15mu}.
  \end{align*}
  In view of~\eqref{eq:def:MeasureCapInequ}, this verifies that $C_{\Psi_1} = C_2$. The weighted Hardy inequality was then used to derive Poincar\'e and logarithmic Sobolev inequalities (cf.~\cite{BG99,Mi99,ABCF+00}), which we will do in a similar way in the following two corollaries.
\end{rem}

\subsection{Poincar\'e and Sobolev inequalities}
Note that Poincar\'e or logarithmic Sobolev inequalities do not follow directly from Proposition~\ref{prop:OrliczBirnbaum}.  The reason is that the Orlicz--Birnbaum estimate~\eqref{eq:def:PhiPoincare} is for Dirichlet test functions vanishing on a specific set, whereas the Poincar\'e and logarithmic Sobolev inequalities concern Neumann test functions, which have average zero.  Therefore, a splitting technique can be used to translate the Orlicz--Birnbaum estimate to the Neumann case. See also \cite[Chapter 4.4]{Chen06} for some background on this technique. The additional step is taken care in the following two corollaries.
\begin{cor}[Poincar\'e inequalities]\label{cor:PI_via_CapInequ}
  Let $\nu\in \cP(\cS)$ and $b\in \cS$.  Then, there exist $C_{\var}, C_{\PI} > 0$ satisfying
  \begin{align}\label{eq:PI_via_CapInequ}
    \nu(b)\, C_{\var} \;\leq\; C_{\PI} \;\leq\; 4 \, C_{\var}
  \end{align}
  such that the following statements are equivalent:
  \begin{enumerate}[label=(\roman{enumi})]
  \item For all $A \subset \cS \setminus \set*{b}$, the inequality holds
    \begin{align}\label{eq:var:MC}
      \nu[A] \;\leq\; C_{\var}\, \capacity(A, b) .
    \end{align}
  \item The mixed Poincar\'e inequality holds, that is
    \begin{align*}
      \var_{\nu}[f] \;\leq\; C_{\PI}\, \cE(f),
      \qquad \forall f \in \ell^2(\mu).
    \end{align*}
  \end{enumerate}
\end{cor}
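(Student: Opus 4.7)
The plan is to apply Proposition~\ref{prop:OrliczBirnbaum} with the degenerate Legendre--Fenchel pair $(\Phi_1, \Psi_1)$ from Example~\ref{ex:OrliczPairs}~a) and then bridge the gap between the Dirichlet-type inequality produced by that proposition and the Neumann-type variance inequality by a simple centering/splitting argument. With this pair, $\Psi_1^{-1}(r) = 1$ for $r > 0$, so the measure-capacity inequality~\eqref{eq:def:MeasureCapInequ} for $B = \{b\}$ reduces to exactly \eqref{eq:var:MC} with $C_{\Psi_1} = C_{\var}$, and the Orlicz norm simplifies to $\norm{f^2}_{\Phi_1, \nu, K} = \Mean_{\nu}[f^2]$, so \eqref{eq:def:PhiPoincare} becomes $\Mean_{\nu}[f^2] \leq C_{\Phi_1}\, \cE(f)$ for all $f$ vanishing at $b$, with $C_{\Phi_1} \leq 4 C_{\Psi_1}$.

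For the direction (i) $\Rightarrow$ (ii), I would apply this Dirichlet-type inequality to the centred function $g \ldef f - f(b)$, which vanishes at $b$, and use the translation invariance of the Dirichlet form $\cE(g) = \cE(f)$ together with the variational identity $\var_{\nu}[f] = \inf_{c \in \R} \Mean_{\nu}[(f-c)^2] \leq \Mean_{\nu}[(f - f(b))^2]$. This immediately yields $\var_{\nu}[f] \leq C_{\Phi_1}\, \cE(f)$, so one may take $C_{\PI} \ldef C_{\Phi_1} \leq 4 C_{\var}$.

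For (ii) $\Rightarrow$ (i), I would plug the equilibrium potential $h_{A,b}$ from \eqref{eq:equi:potential} into the Poincaré inequality. Since $h_{A,b}|_A = 1$, $h_{A,b}(b) = 0$, and $\cE(h_{A,b}) = \capacity(A, b)$, the key step is to lower-bound the variance in terms of $\nu[A]$. Writing the variance as a double sum
\begin{align*}
  \var_{\nu}[h_{A,b}]
  \;=\;
  \frac{1}{2} \sum_{x,y \in \cS} \nu(x)\, \nu(y)\, \bra[\big]{h_{A,b}(x) - h_{A,b}(y)}^2
  \;\geq\;
  \sum_{x \in A} \nu(x)\, \nu(b)\, \bra[\big]{1 - 0}^2
  \;=\;
  \nu(b)\, \nu[A],
\end{align*}
and combining with $\var_{\nu}[h_{A,b}] \leq C_{\PI}\, \capacity(A, b)$ gives \eqref{eq:var:MC} with $C_{\var} \leq C_{\PI} / \nu(b)$, i.e.\ $\nu(b)\, C_{\var} \leq C_{\PI}$.

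The argument is essentially routine once Proposition~\ref{prop:OrliczBirnbaum} is in place; the only mild subtlety is that the Poincaré inequality is \emph{mixed} (variance with respect to $\nu$, Dirichlet form with respect to $\mu$), so one must verify that the translation and centering steps do not implicitly require $\nu = \mu$. Both the identity $\cE(f + c) = \cE(f)$ and the lower bound on $\var_{\nu}$ above hold unconditionally, so no compatibility between $\nu$ and $\mu$ beyond what is already encoded in the capacity is needed.
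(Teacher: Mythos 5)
Your proposal is correct and follows the same route as the paper: both directions invoke Proposition~\ref{prop:OrliczBirnbaum} with the degenerate pair $(\Phi_1,\Psi_1)$, the forward direction centers at $f(b)$ and uses $\var_{\nu}[f]=\min_a \Mean_{\nu}[(f-a)^2]$, and the reverse direction uses the same double-sum lower bound $\var_{\nu}[f]\geq \nu(b)\,\nu[A]$ for a test function equal to $1$ on $A$ and $0$ at $b$ together with the Dirichlet principle (your direct substitution of $h_{A,b}$ is just the optimizer of that principle, so it is the same argument).
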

\begin{proof}
  (i) $\Rightarrow$ (ii): Let $(\Phi_1, \Psi_1)$ as in Example~\ref{ex:OrliczPairs}~$a)$ and recall that $\Psi_1^{-1}|_{(0,\infty)} \equiv 1$. Then, the measure-capacity inequality~\eqref{eq:def:MeasureCapInequ} coincides exactly with \eqref{eq:var:MC}.  Hence,
  \begin{align*}
    \var_{\nu}\!\big[f\big]
    \;=\;
    \min_{a \in \R}\, \Mean_{\nu}\!\big[(f-a)^2\big]
    \;\leq\;
    \norm*{(f - f(b))^2}_{\Phi_1, \nu}
    \overset{\!\eqref{eq:def:PhiPoincare}\!}{\;\leq\;}
    4\,C_{\var}\, \cE(f) .
  \end{align*}
  (ii) $\Rightarrow$ (i): We start with deducing a lower bound for the variance. Let $0 \leq f\leq 1$ be given such that $f|_A \equiv 1$ and $f(b) = 0$, then
  \begin{align*}
    \var_{\nu}[f]
    \;=\;
    \frac{1}{2} \sum_{x,y\in \cS} \nu(x)\, \nu(y)\; \big(f(x)-f(y)\big)^2
    \;\geq\;
    \sum_{x\in A} \nu(x)\, \nu(b)
    \;=\;
    \nu[A]\, \nu(b) .
  \end{align*}
  The conclusion follows from the Dirichlet principle~\eqref{eq:DirichletPrinciple}.
\end{proof}
\begin{cor}[Logarithmic Sobolev inequalities]\label{cor:LSI_via_CapInequ}
  Let $\nu \in \cP(\cS)$ and $b \in \cS$. Then, there exist $C_{\Ent}, C_{\LSI}>0$ satisfying
  \begin{align}\label{eq:LSI_via_CapInequ}
    \frac{\nu(b)}{\ln(1+\me^2)}\, C_{\Ent}
    \;\leq\;
    C_{\LSI}
    \;\leq\;
    4\, C_{\Ent}
  \end{align}
  such that the following statements are equivalent:
  \begin{enumerate}[label=(\roman{enumi})]
  \item For all $A \subset \cS \setminus \set*{b}$ the inequality holds
    \begin{align}\label{eq:Ent:MC}
      \nu[A]\, \ln\bra*{1 + \frac{\me^2}{\nu[A]}}
      \;\leq\;
      C_{\Ent}\, \capacity(A, b) .
    \end{align}
  \item The mixed logarithmic Sobolev inequality holds, that is
    \begin{align}\label{eq:Ent}
      \Ent_{\nu}[f^2] \;\leq\; C_{\LSI}\, \cE(f), 
      \qquad \forall f \in \ell^2(\mu) .
    \end{align}
  \end{enumerate}
\end{cor}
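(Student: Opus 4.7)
The plan is to mirror the proof of Corollary~\ref{cor:PI_via_CapInequ}, using the Legendre--Fenchel pair $(\Phi_{\Ent},\Psi_{\Ent})$ from Example~\ref{ex:OrliczPairs}~b) in place of $(\Phi_1,\Psi_1)$ and the relative entropy in place of the variance. Since $\Psi_{\Ent}^{-1}(r)=\ln(1+r)$, the measure-capacity hypothesis~\eqref{eq:def:MeasureCapInequ} of Proposition~\ref{prop:OrliczBirnbaum} reduces precisely to~\eqref{eq:Ent:MC} for the choice $K=\me^2$, with $C_{\Psi}=C_{\Ent}$; this singles out $K=\me^2$ as the natural Orlicz index for the log-Sobolev problem.

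For the direction (i) $\Rightarrow$ (ii), Proposition~\ref{prop:OrliczBirnbaum} gives $\norm{f^2}_{\Phi_{\Ent},\nu,\me^2}\leq 4\,C_{\Ent}\,\cE(f)$ for every $f\in\ell^2(\mu)$ with $f(b)=0$. The entropy bound of Example~\ref{ex:OrliczPairs}~b) together with the monotonicity of $\norm{\cdot}_{\Phi_{\Ent},\nu,K}$ in $K$ then yields $\Ent_\nu[f^2]\leq\norm{f^2}_{\Phi_{\Ent},\nu,1}\leq\norm{f^2}_{\Phi_{\Ent},\nu,\me^2}$, so that~\eqref{eq:Ent} holds with constant $4\,C_{\Ent}$ for all $f$ vanishing at $b$. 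To promote this Dirichlet log-Sobolev inequality to the Neumann inequality on all of $\ell^2(\mu)$, I would apply the Dirichlet version to $g\ldef f-f(b)$ (using $\cE(g)=\cE(f)$) and establish the recentring comparison
\begin{align*}
  \Ent_\nu[f^2]
  \;\leq\;
  \norm{(f-f(b))^2}_{\Phi_{\Ent},\nu,\me^2}
  \qquad \forall\, f\in\ell^2(\mu),
\end{align*}
which plays the role of the identity $\var_\nu[f]\leq\Mean_\nu[(f-f(b))^2]$ from the Poincaré case. The constant $\me^2$ in~\eqref{eq:Ent:MC} is tuned precisely so that this comparison holds: routing through the Gibbs variational formula $\Ent_\nu[f^2]=\sup_g\set{\Mean_\nu[f^2 g]:\Mean_\nu[\me^g]\leq 1}$ with the substitution $g=\ln(\me^h-1)$ from Example~\ref{ex:OrliczPairs}~b), the pairing $\Mean_\nu[f^2 h]$ can be recentred around $f(b)$ and rewritten as a pairing against $(f-f(b))^2$, with the additive $\me^2$ in $\Mean_\nu[\me^h-1]\leq\me^2$ absorbing the contribution of the boundary value $f(b)$.

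For the direction (ii) $\Rightarrow$ (i), I would mirror the Poincaré strategy by testing~\eqref{eq:Ent} on a function constructed from the equilibrium potential~\eqref{eq:equi:potential}. Concretely, for $A\subset\cS\setminus\set{b}$ the natural choice is $f=\sqrt{\alpha}\,h_{A,\set{b}}$ with $\alpha=1+\me^2/\nu[A]$, which satisfies $f(b)=0$, $f|_A=\sqrt{\alpha}$ and $\cE(f)=\alpha\,\capacity(A,\set{b})$. A direct computation of $\Ent_\nu[f^2]=\alpha\,\Ent_\nu[h_{A,\set{b}}^2]$ combined with a lower bound on $\Ent_\nu[h_{A,\set{b}}^2]$ in terms of $\nu[A]\ln(1+\me^2/\nu[A])$ (obtained by isolating the contribution of the pairs $(x,b)$ for $x\in A$, in analogy with $\var_\nu[f]\geq\nu[A]\,\nu(b)$ from Corollary~\ref{cor:PI_via_CapInequ}) then yields~\eqref{eq:Ent:MC} with $C_{\Ent}\leq C_{\LSI}\ln(1+\me^2)/\nu(b)$, via the Dirichlet principle~\eqref{eq:DirichletPrinciple}.

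\emph{Main obstacle.} The most delicate step is the Dirichlet-to-Neumann upgrade in (i) $\Rightarrow$ (ii): unlike the variance, the entropy is not translation invariant, so the substitution $f\mapsto f-f(b)$ used in Corollary~\ref{cor:PI_via_CapInequ} does not preserve the left-hand side of~\eqref{eq:Ent}. Controlling $\Ent_\nu[f^2]$ by the Orlicz $\Phi_{\Ent}$-norm of the recentred function $(f-f(b))^2$ with the precise index $K=\me^2$ is the technical heart of the equivalence, and explains why~\eqref{eq:Ent:MC} features the additive constant $\me^2$ rather than $1$ as would suffice in the classical Muckenhoupt criterion for the $\ell^2$-Hardy inequality.
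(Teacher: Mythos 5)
You have the right framework — $\Phi_{\Ent}$, $K=\me^2$, Proposition~\ref{prop:OrliczBirnbaum} — but there are genuine gaps in both directions.

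For (i) $\Rightarrow$ (ii), you correctly isolate the obstacle (entropy is not translation invariant), but the proposed resolution via ``recentring the pairing'' in the Gibbs formula is not an argument. The key ingredient is Rothaus' lemma (\cite[Lemma~5.1.4]{BGL14}):
\begin{align*}
  \Ent_{\nu}[f^2] \;\leq\; \Ent_{\nu}\!\big[(f-a)^2\big] + 2\,\Mean_{\nu}\!\big[(f-a)^2\big], \qquad a\in\R,
\end{align*}
applied with $a=f(b)$. Writing $f_b=f-f(b)$, the right-hand side equals $\sup_g\{\Mean_{\nu}[f_b^2(g+2)]:\Mean_{\nu}[\me^g]\leq1\}$ by the Donsker--Varadhan formula, and after the shift $h=g+2$ this is bounded by $\|f_b^2\|_{\Phi_{\Ent},\nu,\me^2}$. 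That additive $+2\Mean_{\nu}[f_b^2]$ from Rothaus is precisely why the Orlicz index becomes $\me^2$ rather than $1$; without invoking Rothaus there is no path from $\Ent_{\nu}[f^2]$ to the Orlicz norm of the recentred function. Your heuristic ``the additive $\me^2$ absorbs the boundary value $f(b)$'' is the intuition, but Rothaus is what makes it a proof.

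For (ii) $\Rightarrow$ (i), the proposal misfires. Rescaling $f=\sqrt{\alpha}\,h_{A,\{b\}}$ is a no-op: both $\Ent_{\nu}[f^2]$ and $\cE(f)$ are $1$-homogeneous in $f^2$, so $\alpha$ cancels from the ratio and buys nothing. More seriously, you propose to lower bound $\Ent_{\nu}[h^2]$ ``by isolating the contribution of the pairs $(x,b)$''; this parallels $\var_{\nu}[f]=\tfrac12\sum_{x,y}\nu(x)\nu(y)(f(x)-f(y))^2$, but the entropy has no such bilinear representation, so this step cannot be carried out. The actual argument takes $f\colon\cS\to[0,1]$ with $f|_A\equiv1$, $f(b)=0$, lower bounds $\Ent_{\nu}[f^2]\geq\nu[A]\ln(1/\nu[A])$ via the entropy's variational characterization with the test function $g=\ln(1/\nu[A])$, and then uses the elementary inequality $\ln(1/x)/\ln(1+\me^2/x)\geq(1-x)/\ln(1+\me^2)$ for $x\in(0,1]$, together with $\nu[A]\leq1-\nu(b)$, to convert $\ln(1/\nu[A])$ into $\ln(1+\me^2/\nu[A])$ at the cost of the prefactor $\nu(b)/\ln(1+\me^2)$ appearing in~\eqref{eq:LSI_via_CapInequ}. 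The Dirichlet principle~\eqref{eq:DirichletPrinciple} then finishes, as you anticipated.
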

\begin{proof}
  (i) $\Rightarrow$ (ii): Set $f_b(x) \ldef f(x) - f(b)$ for $x \in \cS$.  Then, by applying a useful observation due to Rothaus~\cite[Lemma~5.1.4]{BGL14},
  \begin{align}\label{eq:cor:LSI_via_CapInequ:p1}
    \Ent_{\nu}\!\big[f^2\big]
    &\;\leq\;
    \Ent_{\nu}\!\big[ f_{b}^2\big] \,+\, 2\, \Mean_{\nu}\!\big[f_{b}^2\big]
    \nonumber\\[1.5ex]
    &\;=\;
    \sup_g \big\{
      \Mean_{\nu}\!\big[ f_{b}^2\, (g+2)\big]
      \,:\, \Mean_{\nu}\!\big[ \me^g \big] \leq 1
    \big\}
    \nonumber\\[.5ex]
    &\;\leq\;
    \sup_h \set*{
      \Mean_{\nu}\!\big[ f_{b}^2\, h\big]
      \,:\, h \geq 0,\; \Mean_{\nu}\!\big[ \me^h-1 \big] \leq \me^2
    }
    \;=\;
    \norm*{ f_{b}^2}_{\Phi, \nu, \me^2}, 
  \end{align}
  where we used the Orlicz-Pair~\ref{ex:OrliczPairs}~$b)$ and the definition of the $K$-Orlicz norm with $K=\me^2$ in~\eqref{eq:def:K-OrliczNorm}. The first implication follows now by an application of~\eqref{eq:def:PhiPoincare}.
  \smallskip

  \noindent(ii) $\Rightarrow$ (i): In order to prove the opposite direction, let $A \subset \cS \setminus \{b\}$ with $\nu[A] \ne 0$, and consider a function $f\!: \cS \to [0,1]$ with the property that $f|_A \equiv 1$ and $f(b) = 0$.  By using $g = \ln\bra*{1 / \nu[A]}$ as test function in the variational representation of the entropy we deduce that
  \begin{align*}
    \Ent_{\nu}\!\big[ f^2 \big]
    &\;\geq\;
    \sup_g \set*{
      \Mean_{\nu}\!\big[ g \dsOne_A\big]
      \,:\,
      \Mean_\nu[\me^g \dsOne_A] \leq 1
    }
    \;\geq\;
    \nu[A]\, \ln\bra*{\frac{1}{\nu[A]}}.
  \end{align*}
  Since $\ln(1/x) / \ln(1 + \me^2/x) \geq (1-x)/\ln(1+\me^2)$ for all $x \in (0,1]$ and $\nu[A] \in (0, 1-\nu(b)]$, we obtain that
  \begin{align*}
    \nu[A]\, \ln\bra*{\frac{1}{\nu[A]}}
    \;\geq\;
    \nu[A]\,\ln\bra*{1 + \frac{\me^2}{\nu[A]}}\,\frac{\nu(b)}{\ln(1+\me^2)}.
  \end{align*}
  Thus, \eqref{eq:Ent:MC} follows from \eqref{eq:Ent} by the Dirichlet principle~\eqref{eq:DirichletPrinciple}.
\end{proof}
The results of Corollary~\ref{cor:PI_via_CapInequ} and Corollary~\ref{cor:LSI_via_CapInequ} can be strengthened to identify the optimal Poincar\'e and logarithmic Sobolev constant up to a universal numerical factor, that is, by replacing $\nu[b]$ in the lower bounds~\eqref{eq:PI_via_CapInequ} and~\eqref{eq:LSI_via_CapInequ} by a universal numerical constant. The price to pay is to enforce the assumptions in the inequalities~\eqref{eq:var:MC} and~\eqref{eq:Ent:MC}. Although, in the application to metastable Markov chains, these results cannot provide an asymptotic sharp constant, we include them here for completeness.
\begin{cor}\label{cor:PI_via_CapInequ:universal}
  Let $\nu\in \cP(\cS)$. Then, there exist $C_{\var}, C_{\PI} > 0$ satisfying
  \begin{align*}
    \tfrac{1}{2} C_{\var} \;\leq\; C_{\PI} \;\leq\; 4 \, C_{\var}
  \end{align*}
  such that the following statements are equivalent:
  \begin{enumerate}[label=(\roman{enumi})]
  \item For all disjoint subsets $A,B \subset \cS$ with
    \begin{align}\label{eq:var:MC:general}
      \nu[A] \;\leq\; \tfrac{1}{2}
      \quad \text{and} \quad
      \nu[B] \;\geq\; \tfrac{1}{2}
      \qquad\text{holds}\qquad
      \nu[A] \;\leq\; C_{\var}\, \capacity(A, B) .
    \end{align}
  \item The mixed Poincar\'e inequality holds, that is
    \begin{align*}
      \var_{\nu}[f] \;\leq\; C_{\PI}\, \cE(f) ,\qquad \forall f\in \ell^2(\mu) .
    \end{align*}
  \end{enumerate}
\end{cor}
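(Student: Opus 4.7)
The plan is to adapt the proof of Corollary~\ref{cor:PI_via_CapInequ} by replacing the fixed anchor point $b$ with a median of the test function, which is the natural way to exploit the fact that the measure-capacity inequality~\eqref{eq:var:MC:general} is only assumed on sets $A$ of $\nu$-measure at most $\tfrac{1}{2}$.

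For the implication (i) $\Rightarrow$ (ii), I would fix $f \in \ell^2(\mu)$, choose a median $m$ of $f$ under $\nu$, set $g \ldef f - m$, and decompose $g = g_+ - g_-$ into its positive and negative parts, which have disjoint supports. Since $\var_{\nu}[f] \leq \Mean_{\nu}[g^2] = \Mean_{\nu}[g_+^2] + \Mean_{\nu}[g_-^2]$, it suffices to bound each term separately. By the defining property of a median, $g_+$ vanishes on $B_+ \ldef \set*{x : f(x) \leq m}$ with $\nu[B_+] \geq \tfrac{1}{2}$, while every super-level set $A_t^+ \ldef \set*{x : g_+(x) > t}$ satisfies $\nu[A_t^+] \leq \tfrac{1}{2}$ for $t > 0$. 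These are precisely the hypotheses of~\eqref{eq:var:MC:general}, so layer-cake integration and the capacitary inequality (Theorem~\ref{thm:CapInequ}) applied to $g_+$ give
\begin{equation*}
  \Mean_{\nu}[g_+^2] \;=\; \int_0^\infty 2t\, \nu[A_t^+]\, \dx{t} \;\leq\; C_{\var} \int_0^\infty 2t\, \capacity(A_t^+, B_+)\, \dx{t} \;\leq\; 4\, C_{\var}\, \cE(g_+),
\end{equation*}
and analogously for $g_-$ with $B_- \ldef \set*{x : f(x) \geq m}$. Summing and using the pointwise inequality $\cE(g_+) + \cE(g_-) \leq \cE(g) = \cE(f)$ yields $C_{\PI} \leq 4\, C_{\var}$.

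For the reverse implication (ii) $\Rightarrow$ (i), given disjoint $A, B$ with $\nu[A] \leq \tfrac{1}{2} \leq \nu[B]$, I would test the Poincaré inequality against the equilibrium potential $h \ldef h_{A,B}$. The Dirichlet-form side is exactly $C_{\PI}\, \capacity(A,B)$ by~\eqref{eq:DirichletPrinciple}, and the symmetric pair representation of the variance, restricted to pairs $(x,y) \in A \times B$ where $h(x) - h(y) = 1$, yields $\var_{\nu}[h] \geq \nu[A]\, \nu[B] \geq \nu[A]/2$; this provides $C_{\var} \leq 2\, C_{\PI}$.

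The only genuinely new technical point relative to Corollary~\ref{cor:PI_via_CapInequ} is the pointwise estimate $\cE(g_+) + \cE(g_-) \leq \cE(g)$, which has to be checked edge by edge: for any pair $(x,y)$ with $g(x) \geq 0 \geq g(y)$ one has $(g_+(x) - g_+(y))^2 + (g_-(x) - g_-(y))^2 = g_+(x)^2 + g_-(y)^2 \leq \bra*{g_+(x) + g_-(y)}^2 = (g(x) - g(y))^2$, while for same-sign pairs one of the two contributions vanishes and the other equals the full $(g(x) - g(y))^2$. Everything else is a routine reuse of the capacitary inequality and of the Dirichlet principle, so this median-splitting step is the only place where the argument genuinely departs from that of Corollary~\ref{cor:PI_via_CapInequ}.
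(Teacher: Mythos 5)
Your proof is correct and follows essentially the same route as the paper: split at a median of $f$ under $\nu$, decompose $(f-m)$ into positive and negative parts, apply the measure--capacity hypothesis plus the capacitary inequality to each part (you unroll Proposition~\ref{prop:OrliczBirnbaum} directly, which is a cosmetic difference), and combine via the edgewise bound $\cE\big((f-m)_+\big)+\cE\big((f-m)_-\big)\leq\cE(f)$, with the converse direction obtained by testing against a function that is $1$ on $A$ and $0$ on $B$. The only visible difference is that for (ii)$\Rightarrow$(i) you use the lower bound $\var_{\nu}[h]\geq\nu[A]\nu[B]$ directly whereas the paper bounds $\var_{\nu}[f]\geq\nu[B]\Mean_{\nu}[f^2]$, but both yield the same constant $\tfrac12\nu[A]\leq C_{\PI}\capacity(A,B)$.
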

\begin{proof}
  (i) $\Rightarrow$ (ii): Let us denote by $m \in \R$ the median of $f$ with respect to $\nu$, that is $\nu[ f < m ] \leq \frac{1}{2}$ and $\nu[ f > m ]\leq \frac{1}{2}$.  Note that the sets $A^-=\set{f<m}$ and $B^-=\set{f\geq m}$ and $A^+=\set{f>m}$ and $B^+=\set{f\leq m}$ satisfy the assumption \eqref{eq:var:MC:general}.  Moreover, by means of Proposition~\ref{prop:OrliczBirnbaum}, we get that
  \begin{align*}
    \var_{\nu}[f]
    \;\leq\;
    \Mean_{\nu}\big[(f-m)_-^2\big] + \Mean_{\nu}\big[(f-m)_+^2\big]
    \;\leq\;
    4 C_{\var} \Big( \cE\bra[\big]{(f-m)_-} + \cE\bra[\big]{(f-m)_+} \Big).
  \end{align*}
  Hence, the conclusion of the first implication follows once we have shown that $\cE\bra{(f-m)_+} \,+\, \cE\bra{(f-m)_-} \leq \cE(f)$.  However, such estimate is a consequence of the pointwise bound
  \begin{align*}
    \bra[\big]{(f(x)-m)_+ - (f(y)-m)_+}^2
    \,+\,
    \bra[\big]{(f(x)-m)_- - (f(y)-m)_-}^2
    \;\leq\;
    \bra[\big]{f(x)-f(y)}^2
  \end{align*}
  for any $x,y \in \cS$.  Indeed, the bound is obvious for the cases $x,y\in \set*{f>m}$ and $x,y\in \set*{f<m}$. Now, suppose that $x \in \set*{f >m}$ and $y\in \set*{f<m}$, then the inequality reduces to show
  \begin{align*}
    \bra*{f(x)-m}^2 \,+\, \bra*{f(y)-m}^2 \;\leq\; \bra*{f(x)-f(y)}^2 ,
  \end{align*}
  which follows from the elementary inequality $ m f(x) + m f(y) - m^2 \geq f(x) f(y)$ provided that $f(y)\leq m \leq f(x)$.
  \smallskip

  \noindent(ii) $\Rightarrow$ (i): For the converse statement let $f$ be a test function such that $0 \leq f \leq 1$, $f|_A \equiv 1$ and $f|_B \equiv 0$.  Then, 
  \begin{align*}
    \cE(f)\, C_\PI
    \;\geq\;
    \var_{\nu}[f]
    \;=\;
    \frac{1}{2} \sum_{x,y\in \cS} \nu(x)\, \nu(y)\; \big(f(x)-f(y)\big)^2
    \;\geq\;
    \nu[A]\, \nu[B]
    \;\geq\;
    \tfrac{1}{2}\, \nu[A] .
  \end{align*}
  The conclusion follows from the Dirichlet principle~\eqref{eq:DirichletPrinciple}.
\end{proof}
\begin{cor}
  Let $\nu \in \cP(\cS)$. Then, there exist $C_{\Ent}, C_{\LSI}$ satisfying
  \begin{align*}
    \frac{1}{2 \ln\bra*{1+\me^2}}\, C_{\Ent}
    \;\leq\;
    C_{\LSI}
    \;\leq\;
    4 \, C_{\Ent}
  \end{align*}
  such that the following statements are equivalent:
  \begin{enumerate}[label=(\roman{enumi})]
  \item For all $A,B \subset \cS$ disjoint with
    \begin{align*}
      \nu[A] \;\leq\; \tfrac{1}{2}
      \quad \text{and} \quad
      \nu[B] \;\geq\; \tfrac{1}{2}
      \qquad \text{holds} \qquad
      \nu[A]\, \ln\bra*{1 + \frac{\me^2}{\nu[A]}}
      \;\leq\; C_{\Ent}\, \capacity(A, B) .
    \end{align*}
  \item The mixed logarithmic Sobolev inequality holds, that is
    \begin{align*}
      \Ent_{\nu}[f] \;\leq\; C_{\LSI}\, \cE(f),
      \qquad \forall f\in\ell^2(\mu).
    \end{align*}
  \end{enumerate}
\end{cor}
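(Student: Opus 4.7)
The plan is to combine the strategy of Corollary~\ref{cor:LSI_via_CapInequ} with the median-splitting trick from Corollary~\ref{cor:PI_via_CapInequ:universal}. For the implication (i)$\Rightarrow$(ii), given $f \in \ell^2(\mu)$, I would introduce a median $m$ of $f$ with respect to $\nu$ and set $f_\pm \ldef (f-m)_\pm$, so that $f_+$ vanishes on $B_+ \ldef \set*{f \leq m}$ and $f_-$ on $B_- \ldef \set*{f \geq m}$, each of $\nu$-measure at least $1/2$. The first step is to apply a Rothaus-type inequality $\Ent_\nu[f^2] \leq \Ent_\nu[(f-m)^2] + 2\Mean_\nu[(f-m)^2]$, followed by subadditivity of the entropy for nonnegative functions with disjoint supports applied to $(f-m)^2 = f_+^2 + f_-^2$; this reduces matters to bounding $\Ent_\nu[f_\pm^2] + 2\Mean_\nu[f_\pm^2]$. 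By the variational manipulation already carried out in~\eqref{eq:cor:LSI_via_CapInequ:p1}, each such sum equals the Orlicz norm $\norm{f_\pm^2}_{\Phi_\Ent,\nu,\me^2}$. Proposition~\ref{prop:OrliczBirnbaum} applied with the pair $(\Phi_\Ent,\Psi_\Ent)$ and $K = \me^2$ -- whose hypothesis is exactly the measure-capacity bound in (i) when restricted to subsets $A \subset \cS \setminus B_\pm$ -- then bounds each by $4\,C_\Ent\,\cE(f_\pm)$, and the pointwise estimate $\cE(f_+) + \cE(f_-) \leq \cE(f)$ established in Corollary~\ref{cor:PI_via_CapInequ:universal} closes the chain, yielding $C_\LSI \leq 4\,C_\Ent$.

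For the converse (ii)$\Rightarrow$(i), given disjoint $A, B \subset \cS$ with $\nu[A] \leq 1/2$ and $\nu[B] \geq 1/2$, I would choose $f = h_{A,B}$, for which $\cE(f) = \capacity(A,B)$ by the Dirichlet principle~\eqref{eq:DirichletPrinciple}. To lower bound $\Ent_\nu[f^2]$ I invoke the unconstrained Gibbs variational inequality $\Ent_\nu[f^2] \geq \Mean_\nu[f^2\phi] - \Mean_\nu[f^2]\,\log\Mean_\nu[\me^\phi]$ with the test function $\phi = \log\bra*{1 + \nu[B]/\nu[A]}\,\dsOne_A + \beta\,\dsOne_B$ in the limit $\beta \to -\infty$; using $f^2|_A \equiv 1$, $f^2|_B \equiv 0$ and that $\Mean_\nu[\me^\phi] \to 1$, the logarithmic term disappears and one obtains
\[
  \Ent_\nu[f^2]
  \;\geq\;
  \nu[A]\,\log\bra[\big]{1 + \nu[B]/\nu[A]}
  \;\geq\;
  \nu[A]\,\log\bra[\big]{1 + 1/(2\nu[A])}.
\]
Combining this with LSI~(ii) applied to $f = h_{A,B}$ and the elementary one-variable comparison $\log(1 + \me^2/t) \leq 2\log(1+\me^2)\,\log(1+1/(2t))$ on $t \in (0,1/2]$ -- which I would verify by checking that the ratio $\log(1+\me^2/t)/\log(1+1/(2t))$ is monotone increasing in $t$ and attains its maximum $\log(1+2\me^2)/\log 2$ at $t = 1/2$, a value bounded above by $2\log(1+\me^2)$ -- delivers $\nu[A]\log(1 + \me^2/\nu[A]) \leq 2\log(1+\me^2)\,C_\LSI\,\capacity(A,B)$, which is precisely (i) with $C_\Ent \leq 2\log(1+\me^2)\,C_\LSI$.

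The main obstacle I anticipate is justifying the Rothaus-type inequality used in the first step. The classical Rothaus lemma~\cite[Lem.~5.1.4]{BGL14} directly yields only $\Ent_\nu[f^2] \leq \Ent_\nu[(f - \Mean_\nu f)^2] + 2\var_\nu[f]$, which employs the mean rather than the median; the median version is essential because it is what guarantees both $f_+$ and $f_-$ to vanish on sets of $\nu$-measure $\geq 1/2$ and thereby matches the scope of assumption~(i). To bridge this gap the plan is to establish the quantitative Lipschitz-type comparison
\[
  \Ent_\nu[(f - \Mean_\nu f)^2] - \Ent_\nu[(f - m)^2]
  \;\leq\;
  2(m - \Mean_\nu f)^2
  \;\leq\;
  2\,\Mean_\nu[(f - m)^2],
\]
so that the classical statement combined with $\var_\nu[f] \leq \Mean_\nu[(f-m)^2]$ upgrades to the median version. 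The elementary logarithmic inequality needed in the converse direction is a short one-variable calculus exercise.
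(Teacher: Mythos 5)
Your proposal follows essentially the same strategy as the paper's own proof in both directions: for (i)$\Rightarrow$(ii), shift $f$ by a median $m$, invoke a Rothaus-type inequality, split $(f-m)^2 = f_+^2 + f_-^2$, and apply the Orlicz--Birnbaum estimate to each half together with $\cE(f_+)+\cE(f_-)\le\cE(f)$; for (ii)$\Rightarrow$(i), feed the equilibrium potential $h_{A,B}$ into the LSI and lower-bound the entropy by a test function in the variational formula, closing with an elementary logarithmic comparison. The minor cosmetic differences (subadditivity of the entropy for disjointly supported nonnegative functions versus the triangle inequality for the Orlicz norm $\norm{\cdot}_{\Phi,\nu,\me^2}$; a slightly different but valid test function in the converse direction) do not change the argument in any essential way. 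Note only that the chain in \eqref{eq:cor:LSI_via_CapInequ:p1} gives $\Ent_\nu[f_\pm^2] + 2\Mean_\nu[f_\pm^2] \le \norm{f_\pm^2}_{\Phi,\nu,\me^2}$, an inequality rather than the equality you state; this is harmless since the inequality is the direction you need.

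The genuine gap is in the bridge lemma you propose to justify the median shift. The claimed comparison
\begin{equation*}
  \Ent_\nu\!\big[(f-\Mean_\nu f)^2\big] - \Ent_\nu\!\big[(f-m)^2\big] \;\le\; 2\,(m - \Mean_\nu f)^2
\end{equation*}
is \emph{too strong} to hold for an arbitrary shift and you give no argument that exploits $m$ being a median. To see that some constraint is essential, let $\nu$ assign mass $1/3$ to one point and $2/3$ to another, let $\tilde g$ take the values $-2$ and $1$ there (so $\Mean_\nu\tilde g = 0$), and set $g = \tilde g + 0.01$. Then $\Ent_\nu[\tilde g^2] \approx 0.4621$ while $\Ent_\nu[g^2] + 2(\Mean_\nu g)^2 \approx 0.4438 + 0.0002 = 0.4440$, so the displayed inequality fails for this non-median shift. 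Whether it survives under the median constraint is plausible numerically but far from clear, and deriving it is not a ``short calculus exercise.'' In any case the full strength of the bridge is unnecessary: what both the paper and your argument actually need is the weaker, shift-invariant form of Rothaus's lemma,
\begin{equation*}
  \Ent_\nu[f^2] \;\le\; \Ent_\nu\!\big[(f-c)^2\big] + 2\,\Mean_\nu\!\big[(f-c)^2\big]
  \qquad \text{for every constant } c,
\end{equation*}
which is valid without any centering hypothesis on $c$ (it does \emph{not} reduce to your bridge plus the classical mean-centered statement, since $2\Mean_\nu[(f-c)^2]$ strictly dominates $2(c-\Mean_\nu f)^2$). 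This generalized Rothaus inequality is standard in the LSI literature (see, e.g., Barthe--Roberto), is exactly what \eqref{eq:cor:LSI_via_CapInequ:p1} tacitly relies on, and applying it with $c=m$ immediately closes the argument without the unproved bridge step. You should replace the bridge by a direct citation of (or short proof of) this shift-invariant Rothaus inequality.
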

\begin{proof}
  (i) $\Rightarrow$ (ii): We shift $f$ according to its median $m$ with respect to $\nu$ (cf.~proof of Corollary~\ref{cor:PI_via_CapInequ:universal}) by applying~\eqref{eq:cor:LSI_via_CapInequ:p1} to $f-m$ and get
  \begin{align*}
    \Ent_{\nu}[f^2]
    \;\leq\;
    \norm*{(f-m)^2}_{\Phi,\nu,\me^2}
    \;\leq\;
    \norm*{(f-m)_+^2}_{\Phi,\nu,\me^2} + \norm*{(f-m)_-^2}_{\Phi,\nu,\me^2} .
  \end{align*}
  The first implication follows by applying Proposition~\ref{prop:OrliczBirnbaum} and combining $\cE\bra{(f-m)_+}$ and $\cE\bra{(f-m)_-}$ as in the proof of Corollary~\ref{cor:PI_via_CapInequ:universal}.
  \smallskip
  
  \noindent(ii) $\Rightarrow$ (i): The converse statement follows exactly along the lines of the proof of Corollary~\ref{cor:LSI_via_CapInequ} with the additional assumption that $\nu[B] \geq \frac{1}{2}$.
\end{proof}

\section{Application to metastable Markov chains}
\label{sec:MetaMC}
In Section~\ref{s:apriori}, we derive estimates and other technical tools based on the capacitary inequality as well as the metastable assumption. Sections~\ref{s:PI} and~\ref{s:LSI} contain the main results on the asymptotically sharp estimates for the Poincar\'e and logarithmic Sobolev constants for metastable Markov chains, respectively.
\smallskip

Throughout this section, we suppose that Assumption~\ref{ass:metastability:sets} holds.

\subsection{A priori estimates}
\label{s:apriori}
To apply the definition of metastable sets, we first show that for any subset of the local valley $\valley_i$ the hitting probability of the union of all metastable sets can be replaced by the hitting probability of any single set $M \in \cM$.
\begin{lem}
  For any $M_i \in \cM$ and $A \subset \valley_i \setminus M_i$,
  \begin{align}\label{eq:CompareMetaHitting:sets}
    \prob_{\mu_A}\!\big[\tau_{M_i} < \tau_A\big]
    \;\geq\;
    \frac{1}{|\cM|}\,
    \prob_{\mu_A}\!\big[\tau_{{\scriptscriptstyle \bigcup_{j=1}^K} M_j} < \tau_A\big].
  \end{align}
  In particular,
  \begin{align}\label{eq:metaest:sets}
    \prob_{\mu_A}\!\big[\tau_{M_i} < \tau_A\big]
    \;\geq\;
    \frac{1}{\varrho}\,
    \max_{M \in \cM}
    \prob_{\mu_{M}}\!\big[
      \tau_{{\scriptscriptstyle \bigcup_{j=1}^K} M_j \setminus M} < \tau_{M}
    \big].
  \end{align}
\end{lem}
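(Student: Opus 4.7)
The strategy is to reduce $\prob_{\mu_A}[\tau_{M_i}<\tau_A]$ to the probability that the chain first enters $B\ldef\bigcup_{j=1}^K M_j$ via $M_i$, and to compare this ``direct-entry'' probability with its analogues for the other $M_l$'s by combining the local valley hypothesis with reversibility. Setting $F_l(x)\ldef\prob_x[X(\tau_B)\in M_l,\,\tau_B<\tau_A]$ for $x\in A$ and $l\in\set{1,\ldots,K}$, the inclusion $M_i\subset B$ forces $\tau_B\leq\tau_{M_i}$ and hence
\begin{align*}
  \prob_{\mu_A}\!\big[\tau_{M_i}<\tau_A\big] \;\geq\; \Mean_{\mu_A}[F_i],
  \qquad\text{while}\qquad
  \sum_{l=1}^{K} F_l(x) \;=\; \prob_x[\tau_B<\tau_A].
\end{align*}
Thus \eqref{eq:CompareMetaHitting:sets} follows once $\Mean_{\mu_A}[F_i]\geq\Mean_{\mu_A}[F_l]$ is established for every $l$.

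To obtain this comparison on the level of $\mu_A$-averages I would introduce the unrestricted harmonic measure $u_l(x)\ldef\prob_x[X(\tau_B)\in M_l]$ and, for $x,y\in A$, the return kernel $E(x,y)\ldef\prob_x[\tau_A<\tau_B,\,X(\tau_A)=y]$. A first-step decomposition at $\sigma\ldef\tau_A\wedge\tau_B$ supplies the renewal identity $u_l(x)=F_l(x)+\sum_{y\in A}E(x,y)\,u_l(y)$ for $x\in A$. Reversibility~\eqref{eq:detailedBalance} applied path-by-path gives $\mu(x)E(x,y)=\mu(y)E(y,x)$, whence
\begin{align*}
  \sum_{x\in A}\mu(x)\,E(x,y)
  \;=\;
  \mu(y)\,\prob_y[\tau_A<\tau_B]
  \;=\;
  \mu(y)\,\bra*{1-b(y)},
\end{align*}
with $b(y)\ldef\prob_y[\tau_B<\tau_A]$. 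Weighting the renewal identity by $\mu(x)$, summing over $x\in A$, and using the previous display on the right-hand side to cancel the $u_l$-contributions leaves the key identity
\begin{align*}
  \mu[A]\,\Mean_{\mu_A}[F_l] \;=\; \sum_{x\in A}\mu(x)\,u_l(x)\,b(x).
\end{align*}

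Because $A\subset\valley_i$, the definition of the local valley supplies the pointwise bound $u_i(x)\geq u_l(x)$ for every $x\in A$ and every $l$; since $b\geq 0$, the identity above yields at once $\Mean_{\mu_A}[F_i]\geq\Mean_{\mu_A}[F_l]$. Summing over $l$ gives $\abs{\cM}\,\Mean_{\mu_A}[F_i]\geq\prob_{\mu_A}[\tau_B<\tau_A]$, which combined with the lower bound of the first paragraph proves \eqref{eq:CompareMetaHitting:sets}. Inequality \eqref{eq:metaest:sets} then follows immediately from \eqref{eq:CompareMetaHitting:sets} together with the $\varrho$-metastability assumption~\eqref{intro:eq:cap:meta_sets} applied to $A$: the factor $\abs{\cM}$ in~\eqref{intro:eq:cap:meta_sets} cancels the $1/\abs{\cM}$ produced in~\eqref{eq:CompareMetaHitting:sets}.

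The delicate step is the reversibility-based renewal identity for $\Mean_{\mu_A}[F_l]$: it is what transports the valley hypothesis, phrased for the unconstrained harmonic measure $u_l$, into information about the restricted probability $F_l$ that also carries the constraint $\tau_B<\tau_A$. A naive pointwise comparison between the $F_l$'s need not inherit the ordering of the $u_l$'s, since the no-return constraint can reshuffle the exit distribution; the reversibility/renewal pairing is precisely what makes the valley inequality survive the $\mu_A$-averaging.
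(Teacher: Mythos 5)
Your argument is correct and rests on the same underlying idea as the paper's proof: transfer the pointwise valley inequality for the harmonic measures $u_l(x)=\prob_x[X(\tau_B)\in M_l]$, $B=\bigcup_j M_j$, to an inequality on the level of $\mu_A$-averages by exploiting reversibility. The difference is one of packaging. The paper first notes that $u_i(x)\geq 1/\abs{\cM}$ for $x\in\valley_i$ (immediate from $\sum_l u_l=1$), averages this lower bound against the last-exit biased distribution $\nu_{A,B}$, and then invokes the averaged renewal estimate from \cite[Lemma~1.24]{Sl12} to bound $\prob_{\nu_{A,B}}[\tau_{M_i}<\tau_{B\setminus M_i}]$ from above by $\prob_{\mu_A}[\tau_{M_i}<\tau_A]/\prob_{\mu_A}[\tau_B<\tau_A]$. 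You instead derive the exact identity
\begin{align*}
  \mu[A]\,\Mean_{\mu_A}[F_l]
  \;=\;
  \sum_{x\in A}\mu(x)\,u_l(x)\,\prob_x[\tau_B<\tau_A],
  \qquad
  F_l(x)\;\ldef\;\prob_x[X(\tau_B)\in M_l,\,\tau_B<\tau_A],
\end{align*}
from first principles via the renewal decomposition and the path-reversal relation $\mu(x)E(x,y)=\mu(y)E(y,x)$ for the return kernel $E$. This identity is precisely what underlies the cited renewal estimate: dividing both sides by $\capacity(A,B)$ gives $\prob_{\nu_{A,B}}[\tau_{M_i}<\tau_{B\setminus M_i}]=\Mean_{\mu_A}[F_i]/\prob_{\mu_A}[\tau_B<\tau_A]$, and the inclusion $\{X(\tau_B)\in M_i,\,\tau_B<\tau_A\}\subset\{\tau_{M_i}<\tau_A\}$ recovers the paper's \eqref{eq:valley:est2}. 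So the two arguments are mathematically equivalent; yours has the advantage of being self-contained, and stating the exact identity rather than a one-sided bound makes transparent why the valley hypothesis survives the $\mu_A$-averaging. One cosmetic simplification: once the identity is in hand, you can insert the pointwise lower bound $u_i\geq 1/\abs{\cM}$ directly (as the paper does) instead of first establishing $\Mean_{\mu_A}[F_i]\geq\Mean_{\mu_A}[F_l]$ for each $l$ and then summing over $l$.
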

\begin{proof}
  Since \eqref{eq:metaest:sets} is an immediate consequence of \eqref{eq:CompareMetaHitting:sets} and Definition~\ref{def:meta_sets}, it suffices to prove \eqref{eq:CompareMetaHitting:sets}.  Since $\prob_{x}[\tau_{M} < \tau_{{\scriptscriptstyle \bigcup_{j=1}^K} M_j \setminus M}] = \prob_{x}[X(\tau_{{\scriptscriptstyle \bigcup_{j=1}^K} M_j}) = M]$ for any $M \in \cM$ and $x \in \cS$, we obtain
  \begin{align*}
    1
    \;=\;
    \sum_{M \in \cM}
    \prob_{x}[\tau_{M} < \tau_{{\scriptscriptstyle \bigcup_{j=1}^K} M_j \setminus M}]
    \;\leq\;
    |\cM|\,
    \prob_{x}[
      \tau_{M_i} < \tau_{{\scriptscriptstyle \bigcup_{j=1}^K} M_j \setminus M_i}
    ],
    \qquad \forall\, x \in \valley_i.
  \end{align*}
  Thus,
  \begin{align}\label{eq:valley:est1}
    \prob_{\nu_{A, B}}\!\big[
      \tau_{M_i} < \tau_{{\scriptscriptstyle \bigcup_{j=1}^K} M_j \setminus M_i}
    \big]
    \;\geq\; 
    \frac{1}{|\cM|},
    \qquad \forall\, A \subset \valley_i \setminus M_i,
  \end{align}
  where $\nu_{A, B}$ is the last-exit biased distribution as defined in \eqref{eq:def:LastExitBiasedDistri} with $B = \bigcup_{j=1}^K M_j$.  On the other hand, by using averaged renewal estimates that has been proven in \cite[Lemma~1.24]{Sl12}, we get that
  \begin{align}\label{eq:valley:est2}
    \prob_{\nu_{A, B}}\!\big[
      \tau_{M_i} < \tau_{{\scriptscriptstyle \bigcup_{j=1}^K} M_j \setminus M_i}
    \big]
    \;\leq\;
    \frac{\prob_{\mu_A}\!\big[\tau_{M_i} < \tau_{A}\big]}
    {
      \prob_{\mu_A}\!\big[
        \tau_{{\scriptscriptstyle \bigcup_{j=1}^K} M_j} < \tau_A 
      \big]
    }
  \end{align}
  By combining the estimates \eqref{eq:valley:est1} and \eqref{eq:valley:est2}, \eqref{eq:CompareMetaHitting:sets} follows.
\end{proof}
The following lemma shows that the intersection of different local valleys has a negligible mass under the invariant distribution.
\begin{lem}\label{lem:negligible:points}
  Suppose that $X \ldef \valley_k \cap \valley_l \setminus (M_k \cup M_l)$ is non-empty.  Then, it holds that
  \begin{align*}
    \mu[X]
    \;\leq\;
    \varrho\, |\cM|\, \min\big\{\mu[M_k], \mu[M_l]\big\}.
  \end{align*}
\end{lem}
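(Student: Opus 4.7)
My approach rests on three ingredients: (1) the defining property of local valleys, (2) a strong Markov / capacity comparison, and (3) the metastability estimate \eqref{eq:metaest:sets}.

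For $x \in \cS$ and $M \in \cM$ write $p_M(x) \ldef \prob_x[\tau_M < \tau_{\bigcup_j M_j \setminus M}]$; by positive recurrence $\sum_{M \in \cM} p_M(x) = 1$. For $x \in X$, the inclusions $x \in \valley_k$ and $x \in \valley_l$ force $p_{M_k}(x) = p_{M_l}(x) = \max_{M \in \cM} p_M(x)$. Since these two equal maxima already sum to at most $1$, I obtain
$$\prob_x\!\big[\tau_{{\textstyle\bigcup_j} M_j \setminus M_k} < \tau_{M_k}\big] \;=\; 1 - p_{M_k}(x) \;\geq\; \tfrac{1}{2}, \qquad x \in X,$$
and the analogous inequality with $k$ replaced by $l$.

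Next, I apply the strong Markov property at the first visit to $X$ starting from $\mu_{M_k}$. On the event $\{\tau_X < \tau_{M_k}\}$ the walker is at some $y \in X$ for which $\prob_y[\tau_{\bigcup_j M_j \setminus M_k} < \tau_{M_k}] \geq \tfrac{1}{2}$. Its intersection with the event that after $\tau_X$ the walker reaches $\bigcup_j M_j \setminus M_k$ before $M_k$ is contained in $\{\tau_{\bigcup_j M_j \setminus M_k} < \tau_{M_k}\}$ (the complementary scenario, in which $\bigcup_j M_j \setminus M_k$ was already visited before $\tau_X$, only strengthens the inclusion). Hence
$$\prob_{\mu_{M_k}}\!\big[\tau_{\textstyle\bigcup_j M_j \setminus M_k} < \tau_{M_k}\big] \;\geq\; \tfrac{1}{2}\, \prob_{\mu_{M_k}}\!\big[\tau_X < \tau_{M_k}\big],$$
and by \eqref{eq:Capacity:HittingTimes} and the symmetry $\capacity(A,B) = \capacity(B,A)$,
$$\capacity(X, M_k) \;\leq\; 2\, \capacity\big(M_k, {\textstyle\bigcup_j} M_j \setminus M_k\big).$$

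Finally, since $X \subset \valley_k \setminus M_k$, the metastability consequence \eqref{eq:metaest:sets} yields
$$\frac{\capacity(X, M_k)}{\mu[X]} \;\geq\; \frac{1}{\varrho}\; \frac{\capacity(M_k, {\textstyle\bigcup_j} M_j \setminus M_k)}{\mu[M_k]},$$
so that $\mu[X] \leq 2\varrho\, \mu[M_k]$. Exchanging $k$ and $l$ gives $\mu[X] \leq 2\varrho\, \mu[M_l]$; since $|\cM| \geq 2$, the claim $\mu[X] \leq \varrho\, |\cM|\, \min\{\mu[M_k], \mu[M_l]\}$ follows.

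The most delicate point is the strong Markov argument producing the capacity comparison, where one has to verify that the relevant event inclusion survives the possibility that $\bigcup_j M_j \setminus M_k$ is reached before $X$. A further minor technicality is that $X$ could a priori intersect some $M_j$ with $j \ne k, l$; this can be handled by splitting $X$ accordingly and applying the same reasoning to each piece, or by observing that the metastability condition forces $p_{M_j} \approx 1$ on $M_j$, which is incompatible with $M_j \subset \valley_k$.
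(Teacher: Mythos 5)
Your proof is correct, and it takes a genuinely different route than the paper's. The paper bounds $h_{M_l,M_k} \geq 1/\abs{\cM}$ on $X$ (using only $X \subset \valley_l$) and then invokes the symmetry of $-L$ via $\skp{-L h_{M_l,M_k}, h_{X,M_k}}_\mu = \skp{h_{M_l,M_k}, -L h_{X,M_k}}_\mu$ to deduce $\capacity(X, M_k) \leq \abs{\cM}\, \capacity(M_k, M_l)$. You instead exploit the fact that $X$ lies in \emph{both} valleys so that $p_{M_k}$ and $p_{M_l}$ tie for the maximum, forcing $\prob_y[\tau_{\bigcup_j M_j \setminus M_k} < \tau_{M_k}] \geq 1/2$ on $X$, and you extract the capacity comparison $\capacity(X, M_k) \leq 2\,\capacity(M_k, \bigcup_j M_j \setminus M_k)$ by a strong Markov argument at $\tau_X$ rather than a bilinear-form identity. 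Both approaches then feed the resulting comparison into the metastability estimate \eqref{eq:metaest:sets}. Yours is a probabilistic rendering of the same mechanism and actually yields the slightly sharper constant $2\varrho$, which is then loosened to $\varrho\abs{\cM}$ using $\abs{\cM} \geq 2$; the paper's functional-analytic version is more in keeping with the rest of its toolkit. One remark: the "minor technicality" you flag at the end is a non-issue --- even if $y \in X \cap M_j$ for some $j \neq k, l$, the identity $1 - p_{M_k}(y) = \prob_y[\tau_{\bigcup_j M_j \setminus M_k} < \tau_{M_k}]$ still holds for any $y \notin M_k$ (the competing events partition, whether $\tau$ is a return or a hitting time on the $B$-side), and the strong Markov step never needs $y \notin \bigcup_j M_j \setminus M_k$; so neither of your proposed fixes is needed, and the second one ("$p_{M_j} \approx 1$ on $M_j$") would in any case require justification that the Definition of metastable sets does not directly supply.
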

\begin{proof}
  Without loss of generality assume that $\mu[M_k] \leq \mu[M_l]$.  Notice that by \eqref{eq:valley:est1}, $h_{M_l, M_k}(x) \geq \prob_x[\tau_{M_l} < \tau_{{\scriptscriptstyle \bigcup_{j=1}^K} M_j \setminus M_l}]\geq 1/|\cM|$ for any $x \in X \subset \valley_l \setminus M_l$.  Therefore,
  \begin{align*}
    \capacity(M_k, M_l)
    \;\geq\;
    \skp{-L h_{M_l, M_k}, h_{X, M_k}}_{\mu}
    \;=\;
    \skp{h_{M_l, M_k}, -L h_{X, M_k}}_{\mu}
    \;\geq\;
    \frac{1}{|\cM|}\, \capacity(X, M_k).
  \end{align*}
  Thus,
  \begin{align*}
    \mu[X]
    \;\leq\;
    |\cM|\;
    \frac{\capacity(M_k, M_l)}{\prob_{\mu_X}\!\big[\tau_{M_k} < \tau_X\big]}
    \overset{\eqref{eq:metaest:sets}}{\;\leq\;}
    \varrho |\cM|\,
    \frac{\capacity(M_k, M_l)}
    {
      \prob_{\mu_{M_k}}\!\big[
        \tau_{{\scriptscriptstyle \bigcup_{j=1}^K} M_j \setminus M_k} < \tau_{M_k}
      \big]
    }
    \;\leq\;
    \varrho |\cM|\, \mu[M_k],
  \end{align*}
  which concludes the proof.
\end{proof} 
The capacitary inequality combined with the definition of metastable sets yields that the harmonic functions, $h_{M_i, M_j}$, is almost constant on the valleys~$\cS_i$ and $\cS_j$.
\begin{lem}[$\ell^p$-norm estimate]\label{lem:est:L1-L2-h_ab}
  For any $M_i \in \cM$ and $f \in \ell^2(\mu)$ with $f(x) = 0$ for all $x \in M_i$,
  \begin{align}\label{eq:est:L2:sets}
    \Mean_{\mu_i}\!\big[f^2\big]
    \;\leq\;
     \frac{4\, \varrho}{\mu[\cS_i]}\;
     \bigg(
       \max_{M \in \cM}
       \prob_{\mu_{M}}\!\big[
         \tau_{{\scriptscriptstyle \bigcup_{j=1}^K} M_j \setminus M} < \tau_{M}
       \big]
     \bigg)^{\!\!-1}
    \cE(f).
  \end{align}
  In particular, for any $M_i, M_j \in \cM$ with $i \ne j$,
  \begin{align}
    \label{eq:est:L1/2:h_ab:sets}
    \Mean_{\mu_i}\pra[\big]{h_{M_j, M_i}^p}
    &\;\leq\;
    \varrho\, \frac{p}{p-1}\, 
    \min\bigg\{1, \frac{\mu[\cS_j]}{\mu[\cS_i]}\bigg\},
    \qquad \forall\, p > 1,
    \intertext{and}
    \label{eq:est:L1:sets}
    \Mean_{\mu_i}\pra[\big]{h_{M_j, M_i}}
    &\;\leq\;
    \varepsilon
    \,+\,
    \varrho\, \ln(1/\varepsilon)\, 
    \min\bigg\{1, \frac{\mu[\cS_j]}{\mu[\cS_i]}\bigg\},
    \qquad \forall\, \varepsilon \in (0, 1],
  \end{align}
  where $h_{M_j, M_i}$ denotes the equilibrium potential of the pair $(M_j, M_i)$.
\end{lem}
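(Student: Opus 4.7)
The plan is to deduce \eqref{eq:est:L2:sets} from the capacitary inequality of Theorem~\ref{thm:CapInequ} together with the metastability bound \eqref{eq:metaest:sets}, and then to obtain \eqref{eq:est:L1/2:h_ab:sets} and \eqref{eq:est:L1:sets} from \eqref{eq:est:L2:sets} by applying it to suitable nonlinear transforms of the equilibrium potential.

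For \eqref{eq:est:L2:sets}, applying Theorem~\ref{thm:CapInequ} with $B = M_i$ to $f$ (which vanishes on $M_i$) yields $\int_{0}^{\infty} 2t\, \capacity(A_t, M_i)\, \dx t \leq 4\, \cE(f)$, where $A_t = \set{x \in \cS : \abs{f(x)} > t}$. Setting $\tilde{A}_t \ldef A_t \cap \cS_i \subset \valley_i \setminus M_i$ and abbreviating $P^{*} \ldef \max_{M \in \cM} \prob_{\mu_M}[\tau_{{\scriptscriptstyle \bigcup_{j=1}^K} M_j \setminus M} < \tau_M]$, the metastability bound~\eqref{eq:metaest:sets} together with~\eqref{eq:Capacity:HittingTimes} delivers the measure--capacity comparison $\mu[\tilde{A}_t] \leq (\varrho / P^{*})\, \capacity(\tilde{A}_t, M_i)$; by~\eqref{eq:cap:monotonicity} and symmetry of the capacity one has $\capacity(\tilde{A}_t, M_i) \leq \capacity(A_t, M_i)$. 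Combining these with the layer-cake identity $\mu[\cS_i]\, \Mean_{\mu_i}[f^2] = \int_{0}^{\infty} 2t\, \mu[\tilde{A}_t]\, \dx t$ immediately produces \eqref{eq:est:L2:sets}.

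For \eqref{eq:est:L1/2:h_ab:sets}, I would apply \eqref{eq:est:L2:sets} with $f = h_{M_j, M_i}^{p/2}$, which still vanishes on $M_i$. Two ingredients combine. First, the discrete chain-rule inequality $(a^{p/2} - b^{p/2})^2 \leq \tfrac{p^2}{4(p-1)}\, (a - b)(a^{p-1} - b^{p-1})$ for $a, b \geq 0$ and $p > 1$, obtained via Cauchy--Schwarz from the representation $a^{p/2} - b^{p/2} = \int_b^a \tfrac{p}{2}\, s^{p/2-1}\, \dx s$, after summing over edges yields $\cE(h_{M_j, M_i}^{p/2}) \leq \tfrac{p^2}{4(p-1)}\, \cE(h_{M_j, M_i}^{p-1}, h_{M_j, M_i})$. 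Second, since $h_{M_j, M_i}$ is harmonic off $M_i \cup M_j$ with boundary values $0$ and $1$, summation by parts gives the identity $\cE(F(h_{M_j, M_i}), h_{M_j, M_i}) = (F(1) - F(0))\, \capacity(M_j, M_i)$ for every $F\!:[0,1]\to\R$; in particular $\cE(h_{M_j, M_i}^{p-1}, h_{M_j, M_i}) = \capacity(M_j, M_i)$. Inserting into \eqref{eq:est:L2:sets} and using the lower bounds $P^{*} \geq \capacity(M_i, M_j)/\mu[M_i]$ and $P^{*} \geq \capacity(M_j, M_i)/\mu[M_j]$ (both consequences of~\eqref{eq:cap:monotonicity}) yields \eqref{eq:est:L1/2:h_ab:sets}.

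For \eqref{eq:est:L1:sets}, I would split $\Mean_{\mu_i}[h_{M_j, M_i}] = \int_0^{\varepsilon} \mu_i[h_{M_j, M_i} > t]\, \dx t + \int_{\varepsilon}^{1} \mu_i[h_{M_j, M_i} > t]\, \dx t$, bound the first summand trivially by $\varepsilon$, and control the tail by combining Markov's inequality $\mu_i[h_{M_j, M_i} > t] \leq \Mean_{\mu_i}[h_{M_j, M_i}^p] / t^p$ with \eqref{eq:est:L1/2:h_ab:sets}. The choice $p = 1 + 1/\ln(1/\varepsilon)$ balances $\varepsilon^{1-p} = \me$ against the $1/(p-1) = \ln(1/\varepsilon)$ singularity of \eqref{eq:est:L1/2:h_ab:sets}, producing the claimed $\varrho \ln(1/\varepsilon)\, \min\set{1, \mu[\cS_j]/\mu[\cS_i]}$ tail contribution. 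The principal obstacle is tracking the exact numerical prefactors through the chain rule and the optimization in $p$; achieving the sharp constants may require invoking the Orlicz--Birnbaum estimate of Proposition~\ref{prop:OrliczBirnbaum} with a carefully chosen Young function in place of the direct chain-rule computation.
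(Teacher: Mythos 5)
Your argument for \eqref{eq:est:L2:sets} is correct and essentially the paper's: the measure--capacity comparison you write down for $\tilde A_t$ is what the paper deduces and then feeds into Proposition~\ref{prop:OrliczBirnbaum} with the pair $(\Phi_1,\Psi_1)$, which is just the packaged form of the layer-cake computation you unroll. For \eqref{eq:est:L1/2:h_ab:sets}, however, your chain-rule route is genuinely different from the paper's and, as stated, loses a factor of~$p$. Feeding $f=h^{p/2}$ through \eqref{eq:est:L2:sets} (which carries the factor~$4$ from Theorem~\ref{thm:CapInequ}) and then through your chain-rule inequality $(a^{p/2}-b^{p/2})^2\leq \tfrac{p^2}{4(p-1)}(a-b)(a^{p-1}-b^{p-1})$ produces $\varrho\,\tfrac{p^2}{p-1}\min\{1,\mu[\cS_j]/\mu[\cS_i]\}$, not $\varrho\,\tfrac{p}{p-1}$. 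The paper avoids the capacitary inequality entirely here: it observes that for $A_t=\{h_{M_j,M_i}>t\}$ one has $t\,\capacity(A_t,M_i)\leq\skp{-Lh_{A_t,M_i},h_{M_j,M_i}}_\mu=\capacity(M_j,M_i)$, combines this with the raw measure--capacity bound $\mu_i[A_t]\leq\tfrac{\varrho}{\mu[\cS_i]P^*}\capacity(A_t,M_i)$, and integrates $\int_0^1 pt^{p-1}\mu_i[A_t]\,\dx t$. That gives the sharp $p/(p-1)$ with no capacitary-inequality factor~$4$.

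The real gap is in your proof of \eqref{eq:est:L1:sets}. Running Markov's inequality $\mu_i[A_t]\leq\Mean_{\mu_i}[h^p]\,t^{-p}$ into $\int_\varepsilon^1\mu_i[A_t]\,\dx t$ gives $\Mean_{\mu_i}[h^p]\,\tfrac{\varepsilon^{1-p}-1}{p-1}$. Plugging in $\Mean_{\mu_i}[h^p]\leq\varrho\,\tfrac{p}{p-1}\min\{\cdot\}$ one is left with $\varrho\,\tfrac{p}{(p-1)^2}\,\varepsilon^{1-p}\min\{\cdot\}$, and your choice $p-1=1/\ln(1/\varepsilon)$ makes $\varepsilon^{1-p}=\me$ but $\tfrac{p}{(p-1)^2}\sim\ln^2(1/\varepsilon)$ --- so the tail estimate reads $\approx\me\,\varrho\,\ln^2(1/\varepsilon)\min\{\cdot\}$, a whole $\ln(1/\varepsilon)$ worse than claimed, and no choice of $p$ removes the square because Markov's inequality is intrinsically lossy relative to the true $1/t$ tail. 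The paper sidesteps this: the same level-set bound $t\,\capacity(A_t,M_i)\leq\capacity(M_j,M_i)$ together with the measure--capacity inequality yields $\mu_i[A_t]\leq\tfrac{\varrho}{t}\min\{1,\mu[\cS_j]/\mu[\cS_i]\}$ pointwise in $t$, and then $\int_\varepsilon^1 t^{-1}\,\dx t=\ln(1/\varepsilon)$ gives the stated bound directly. You need the explicit level-set observation on the equilibrium potential; passing through the $\ell^p$ estimate with Markov cannot recover it.
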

\begin{proof}
  First, notice that for any $A \subset \valley_i \setminus M_i$
  \begin{align*}
    \mu_i[A]
    \;=\;
    \frac{\capacity(A, M_i)}{\mu[\cS_i]\, \prob_{\mu_A}[\tau_{M_i} < \tau_A]}
    \overset{\eqref{eq:metaest:sets}}{\;\leq\;}
    \frac{\varrho}{\mu[\cS_i]}\;
    \bigg(
      \max_{M \in \cM} 
      \prob_{\mu_{M}}\!\big[
        \tau_{{\scriptscriptstyle \bigcup_{j=1}^K} M_j \setminus M} < \tau_{M}
      \big]
    \bigg)^{\!\!-1}\mspace{-14mu}
    \capacity(A, M_i).
  \end{align*}
  Thus, \eqref{eq:est:L2:sets} follows from Proposition~\ref{prop:OrliczBirnbaum} by choosing $(\Phi, \Psi) = (\Phi_1, \Psi_1)$ as in Example~\ref{ex:OrliczPairs}~a).  In the sequel, we aim at proving \eqref{eq:est:L1/2:h_ab:sets} and \eqref{eq:est:L1:sets}.  For any $t \in [0, 1]$ we write $A_t \ldef \set{x \in \cS : h_{M_j, M_i} > t}$ to denote the super level-sets of $h_{M_j, M_i}$, and set $h_t \ldef h_{A_t, M_i}$.  Then, 
  \begin{align}\label{eq:level:set:h}
    t \capacity(A_t, M_i) 
    \;\leq\; 
    \skp{-L h_t, h_{M_j, M_i}}_{\mu}
    \;=\;
    \skp{h_t, -L h_{M_j, M_i}}_{\mu}
    \;=\;
    \capacity(M_j, M_i).
  \end{align}
  Thus, for any $p > 1$, we obtain
  \begin{align*}
    \Mean_{\mu_i}\pra[\big]{h_{M_j, M_i}^p}
    &\;=\;
    \int_0^1 p t^{p-1}\, \mu_i[A_t]\, \dx{t}
    \\
    &\;\leq\;
    \frac{\varrho}{\mu[\cS_i]}\;
    \bigg(
      \max_{M \in \cM} 
      \prob_{\mu_{M}}\!\big[
        \tau_{{\scriptscriptstyle \bigcup_{j=1}^K} M_j \setminus M} < \tau_{M}
      \big]
    \bigg)^{\!\!-1}
    \int_0^1 p t^{p-1}\, \capacity(A_t, M_i)\, \dx{t}.
  \end{align*}
  Since,
  \begin{align*}
    \max_{M \in \cM}
    \prob_{\mu_M}\!\big[
      \tau_{{\scriptscriptstyle \bigcup_{j=1}^K} M_j \setminus M} < \tau_{M}
    \big]
    \;\geq\;
    \max\big\{
      \prob_{\mu_{M_i}}[\tau_{M_j} < \tau_{M_i}],\,
      \prob_{\mu_{M_j}}[\tau_{M_i} < \tau_{M_j}]
    \big\}
  \end{align*}
  we deduce that
  \begin{align*}
    \Mean_{\mu_i}\pra[\big]{h_{M_j, M_i}^p}
    &\overset{\eqref{eq:level:set:h}}{\;\leq\;}
    \varrho\, \min\bigg\{ 1, \frac{\mu[\cS_j]}{\mu[\cS_i]} \bigg\}\, 
    \int_0^1 p t^{p-2}\, \dx{t}
    \;=\;
    \varrho\, \frac{p}{p-1}\, 
    \min\bigg\{1, \frac{\mu[\cS_j]}{\mu[\cS_i]}\bigg\},
  \end{align*}
  which concludes the proof of \eqref{eq:est:L1/2:h_ab:sets}.  Likewise, we obtain for any $\varepsilon \in (0, 1]$ that
  \begin{align*}
    \Mean_{\mu_i}\pra[\big]{h_{M_j, M_i}}
    &\;=\;
    \varepsilon \,+\, \int_{\varepsilon}^1 \mu_i[A_t]\, \dx{t}
    \overset{\eqref{eq:level:set:h}}{\;\leq\;}
    \varepsilon \,+\, \varrho\,
    \min\bigg\{ 1, \frac{\mu[\cS_j]}{\mu[\cS_i]} \bigg\}
    \int_{\varepsilon}^1 t^{-1}\, \dx{t},
  \end{align*}
  and \eqref{eq:est:L1:sets} follows.
\end{proof}
The bound~\eqref{eq:est:L1:sets} of Lemma~\ref{lem:est:L1-L2-h_ab} provides the main ingredient for the proof of Theorem~\ref{thm:mean:hitting:times}.
\begin{proof}[Proof of Theorem~\ref{thm:mean:hitting:times}]
  Let $B$ and $J \equiv J(i)$ be defined as in Theorem~\ref{thm:mean:hitting:times}.  By \cite[Corollary~7.11]{BdH15} we have that
  \begin{align*}
    \mean_{\nu_{M_i,B}}\pra{\tau_{B}}
    \;=\;
    \frac{\Mean_{\mu}\pra{h_{M_i, B}}}{\capacity(M_i, B)}
    \;=\;
    \frac{\mu[\cS_i]}{\capacity(M_i, B)}\,
    \bra[\bigg]{
      \Mean_{\mu_i}\pra{h_{M_i, B}} 
      \,+\, 
      \sum_{j \ne i}\, \frac{\mu[\cS_j]}{\mu[\cS_i]}\, \Mean_{\mu_j}\pra{h_{M_i, B}}
    }.
  \end{align*}
  In order to prove a lower bound, we neglect the last term in the bracket above.  Since $\prob_{x}[\tau_{{\scriptscriptstyle \bigcup_{j \in J}} M_j} < \tau_{M_i}] \leq \sum_{j \in J} \prob_{x}[\tau_{M_j} < \tau_{M_i}]$, Lemma~\ref{lem:est:L1-L2-h_ab} implies with $\varepsilon = \varrho$
  \begin{align*}
    \Mean_{\mu_i}\pra{h_{M_i, B}}
    \;=\;
    1 - \Mean_{\mu_i}\pra{h_{B, M_i}}
    \;\geq\;
    1 - \sum_{j \in J}\, \Mean_{\mu_i}\pra{h_{M_j, M_i}}
    \overset{\eqref{eq:est:L1:sets}}{\;\geq\;}
    1 - |\cM|\, \varrho \big(1 + \ln 1/\varrho\big).
  \end{align*}
  Hence, we conclude that
  \begin{align*}
    \mean_{\nu_{M_i, B}}[\tau_B] 
    \;\geq\; 
    \frac{\mu[\cS_i]}{\capacity(M_i, B)}\, 
    \bra[\Big]{1 - |\cM| \big(\varrho +\varrho \ln 1/\varrho\big)}.
  \end{align*}
  Concerning the upper bound, recall that by assumption $\mu[\cS_j] / \mu[\cS_i] \leq \delta$ for all $j \not\in J \cup \{i\}$.  Thus, by Lemma~\ref{lem:est:L1-L2-h_ab} with $\varepsilon = \varrho / C_{\textup{ratio}}$, we get
  \begin{align*} 
    \sum_{j \ne i}\, \frac{\mu[\cS_j]}{\mu[\cS_i]}\, \Mean_{\mu_j}\pra{h_{M_i, B}}
    \;\leq\;
    |\cM|\, \delta 
    +
    \sum_{j \in J}\, \frac{\mu[\cS_j]}{\mu[\cS_i]}\, \Mean_{\mu_j}\pra{h_{M_i, M_j}}
    \overset{\eqref{eq:est:L1:sets}}{\;\leq\;}
    |\cM|\, 
    \bigg(
      \delta + \varrho + \varrho \ln \frac{C_{\textup{ratio}}}{\varrho} 
    \bigg).
  \end{align*}
  Since $\Mean_{\mu_i}[h_{M_i, B}] \leq 1$, the proof concludes with the estimate
  \begin{align*}
    \mean_{\nu_{M_i, B}}[\tau_B] 
    \;\leq\; 
    \frac{\mu[\cS_i]}{\capacity(M_i, B)}\, 
    \bra[\Big]
    {
      1 
      + 
      |\cM|\,\big(\delta + \varrho + \varrho \ln(C_{\textup{ratio}} / \varrho)\big)
    }.
  \end{align*}
\end{proof}
Let us define neighborhoods of the metastable sets in terms of level sets of harmonic functions. Therefore, we consider two non-empty, disjoint subsets $\cA, \cB \subset \cM$ of the set of metastable sets, and let $I_{\cA}, I_{\cB} \subset \{1, \ldots, K\}$ be such that $\cA = \{M_i : i \in I_{\!\cA}\}$ and $\cB = \{M_i : i \in I_{\!\cB}\}$.  Further, set $A = \bigcup_{M \in \cA} M$ and $B = \bigcup_{M \in \cB} M$.  For $\delta \in (0, 1)$ define the harmonic neighborhood of $A$ relative to $B$ by
\begin{align}\label{eq:def:neighborhood}
  \cU_{A}(\delta, B)
  \;\ldef\;
  \Big\{
    x \in {\textstyle \bigcup_{i \in I_{\!\cA}}} \cS_i 
    \,:\, 
    h_{A, B}(x) \geq 1 - \delta 
  \Big\}.
\end{align}
The following lemma shows that the capacity of $(\cU_A(\delta,B), \cU_B(\delta,A))$ is comparable to the capacity of $(A, B)$.
\begin{lem}\label{lem:harm:neighborhood}
  Let $M_i \in \cM$ and $\cB \subset \cM \setminus \{M_i\}$.  Then, for any $\delta \in (0, 1/2)\,$,
  \begin{align}\label{eq:harm:neighborhood:cap}
    1 - 2\delta
    \;\leq\;
    \frac{\capacity(M_i, B)}
    {\capacity(\cU_{M_i}(\delta, B), \cU_{B}(\delta, M_i))}
    \;\leq\;
    1.
  \end{align}
  Moreover, for $X \ldef \cS_i \setminus \cU_{M_i}(\delta, B)$,
  \begin{align}\label{eq:harm:neighborhood:mu}
    \mu[X]
    \;\leq\;
    \varrho\, \delta^{-1}\, \mu[M_i].
  \end{align}
\end{lem}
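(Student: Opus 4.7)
The proof splits into the capacity comparison \eqref{eq:harm:neighborhood:cap} and the mass bound \eqref{eq:harm:neighborhood:mu}. For the upper bound in \eqref{eq:harm:neighborhood:cap}, I would invoke monotonicity of the capacity: since $h_{M_i, B} \equiv 1$ on $M_i$ we have $M_i \subset \cU_{M_i}(\delta, B)$, and analogously $B \subset \cU_B(\delta, M_i)$, so either by the Dirichlet principle \eqref{eq:DirichletPrinciple} or directly from the probabilistic representation \eqref{eq:capacity:def} one obtains $\capacity(\cU_{M_i}(\delta, B), \cU_B(\delta, M_i)) \geq \capacity(M_i, B)$. The lower bound requires an admissible test function in the Dirichlet problem for $\capacity(\cU_{M_i}, \cU_B)$ built from $h \ldef h_{M_i, B}$. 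I would take $g \ldef \phi \circ h$ with the piecewise linear clip $\phi(t) \ldef \min\set*{1, \max\set*{0, (t-\delta)/(1-2\delta)}}$. Since $h_{B, M_i} = 1 - h$ everywhere on $\cS$, this $g$ satisfies $g \equiv 1$ on $\cU_{M_i}(\delta, B)$ and $g \equiv 0$ on $\cU_B(\delta, M_i)$.

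The crucial step is to establish $\cE(g) \leq \capacity(M_i, B) / (1 - 2\delta)$, yielding the claimed sharp constant $1 - 2\delta$. Naively, the Lipschitz bound $|\phi(t) - \phi(s)| \leq |t - s|/(1-2\delta)$ only produces $\cE(g) \leq (1-2\delta)^{-2}\, \cE(h)$, which gives the strictly weaker ratio $(1-2\delta)^2$. The trick is to exploit the monotonicity of $\phi$ jointly with its Lipschitz bound: since $\phi$ is nondecreasing and $(1-2\delta)^{-1}$-Lipschitz, one has pointwise
\begin{align*}
  (\phi(t)-\phi(s))^2
  \;\leq\;
  \frac{(\phi(t)-\phi(s))(t-s)}{1-2\delta},
  \qquad \forall\, t, s \in [0, 1].
\end{align*}
Summing this over edges against the symmetric weights $\mu(x) p(x,y)$ yields $\cE(g) \leq \skp{g, -Lh}_\mu / (1-2\delta)$. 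Now $-Lh$ is supported on $M_i \cup B$, with $g \equiv 1$ on $M_i$ (where $h = 1$) and $g \equiv 0$ on $B$ (where $h = 0$), so the right-hand side collapses to $\capacity(M_i, B)/(1-2\delta)$ by the identity $\capacity(M_i, B) = \skp{\one_{M_i}, -Lh}_\mu$ from \eqref{eq:capacity:def}. The Dirichlet principle applied to $g$ then finishes the lower bound.

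For the mass estimate \eqref{eq:harm:neighborhood:mu}, I would observe that $X$ is contained in the super-level set $\set*{x \in \cS_i : h_{B, M_i}(x) > \delta}$. Repeating the level-set capacity argument of \eqref{eq:level:set:h} in the proof of Lemma~\ref{lem:est:L1-L2-h_ab} with $h_{B, M_i}$ in place of $h_{M_j, M_i}$ (and using that the extremal function vanishes on $M_i$), one obtains $\delta\, \capacity(X, M_i) \leq \capacity(B, M_i) = \capacity(M_i, B)$. Since $X \subset \valley_i \setminus M_i$, the metastability estimate \eqref{eq:metaest:sets} bounds $\prob_{\mu_X}[\tau_{M_i} < \tau_X]$ from below by $\capacity(M_i, B)/(\varrho\, \mu[M_i])$; combining this with \eqref{eq:Capacity:HittingTimes} converts the capacity estimate into the desired bound $\mu[X] \leq \varrho\, \delta^{-1}\, \mu[M_i]$. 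The main obstacle throughout is recognizing the chain-rule-type inequality in the capacity comparison: without pairing the monotonicity of $\phi$ with Green's identity, one only achieves the inferior constant $(1-2\delta)^2$.
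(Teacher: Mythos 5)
Your proof is correct, and for the capacity bound \eqref{eq:harm:neighborhood:cap} you take a genuinely different route than the paper. The paper's argument pairs the two equilibrium potentials in the bilinear form and then uses the sign structure of the equilibrium measures: writing $\capacity(M_i, B) = \skp{-L h_{M_i, B},\, h_{\cU_{M_i}(\delta), \cU_B(\delta)}}_{\mu}$ and switching $-L$ by symmetry, one bounds the resulting sum below by $(1-\delta)\,\capacity(\cU_{M_i}, \cU_B)$ on $\cU_{M_i}$ (where $e_{\cU_{M_i},\cU_B} \geq 0$) and by $-\delta\,\capacity(\cU_{M_i}, \cU_B)$ on $\cU_B$ (where $e_{\cU_{M_i},\cU_B} \leq 0$ and $h_{M_i,B} \leq \delta$), giving $(1-2\delta)$. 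Your construction instead stays entirely inside the Dirichlet principle: you feed the clipped harmonic function $g = \phi \circ h_{M_i, B}$ as a test function and obtain the sharp constant via the scalar inequality $(\phi(t)-\phi(s))^2 \leq (\phi(t)-\phi(s))(t-s)/(1-2\delta)$, which reduces $\cE(g)$ to the Green pairing $\skp{g,-Lh_{M_i,B}}_{\mu} = \capacity(M_i, B)$. Both achieve the same constant; the paper's version is shorter but requires one to track the signs of equilibrium measures on both boundary components, whereas yours makes the mechanism behind the sharpness (monotone-Lipschitz pairing versus raw Lipschitz) more transparent and is immediately amenable to other clip profiles. Your observation that naive Lipschitz gives only $(1-2\delta)^2$ is exactly the pitfall. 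For the mass bound \eqref{eq:harm:neighborhood:mu} your argument is essentially the paper's, just routed through the level-set inequality \eqref{eq:level:set:h} and monotonicity of capacity in place of the direct pairing $\skp{h_{B,M_i}, -Lh_{X,M_i}}_{\mu} \geq \delta\,\capacity(X, M_i)$; both are the same Green's-identity computation.
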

\begin{proof}
  Since $M_i \subset \cU_{M_i}(\delta, B)$ and $B \subset \cU_B(\delta, M_i)$ by definition, the upper bound in \eqref{eq:harm:neighborhood:cap} follows from the monotonicity of the capacity, see \eqref{eq:cap:monotonicity}.  In order to prove the lower bound in \eqref{eq:harm:neighborhood:cap}, notice that
  \begin{align*}
    h_{M_i, B}(x) \;\geq\; 1-\delta,
    \quad \forall\, x \in \cU_{M_i}(\delta, B)
    \qquad \text{and} \qquad
    h_{M_i, B}(x) \;\leq\; \delta,
    \quad  \forall\, x \in \cU_{B}(\delta, M_i).
  \end{align*}
  Thus, by using the symmetry of $-L$ in $\ell^2(\mu)$, we obtain
  \begin{align*}
    \capacity(M_i, B)
    &\;=\;
    \skp{-L h_{M_i, B}, h_{\cU_{M_i}(\delta), \cU_B(\delta)}}_{\mu}
    \\[.5ex]
    &\;=\;
    \skp{h_{M_i, B}, -Lh_{\cU_{M_i}(\delta), \cU_B(\delta)}}_{\mu}
    \;\geq\;
    \capacity(\cU_{M_i}(\delta), \cU_{B}(\delta))\,\big(1-2\delta\big).
  \end{align*}
  The proof of \eqref{eq:harm:neighborhood:mu} is similar to the one of Lemma~\ref{lem:negligible:points}.  Since $h_{M_i, B}(x) \leq 1-\delta$ for any $x \in X = \cS_i \setminus \cU_{M_i}(\delta, B)$, we get
  \begin{align*}
    \capacity(M_i, B)
    &\;\geq\;
    \skp{-L h_{B, M_i}, h_{X, M_i}}_{\mu}
    \;=\;
    \skp{h_{B, M_i}, -L h_{X, M_i}}_{\mu}\\
    &\;\geq\;
    \delta\, \capacity(X, M_i)
    \overset{\eqref{eq:metaest:sets}}{\;\geq\;}
    \frac{\delta\, \mu[X]}{\varrho}\,
    \prob_{\mu_{M_i}}\!\big[\tau_B < \tau_{M_i}\big].
  \end{align*}
  Thus, the assertion follows from \eqref{eq:Capacity:HittingTimes}.
\end{proof}

\subsection{Poincar\'e inequality}
\label{s:PI}
In this section we denote by $c$ a numerical finite constant, which may change from line to line.
\begin{thm}\label{thm:PI:metastable}
  Suppose that Assumption~\ref{ass:regularity} and \ref{ass:PI+LSI:metastable:sets}~i) hold.  Then,
  \begin{align}
    \label{eq:thm:PI:constant:sets:lb}
    C_{\PI}
    &\;\geq\;
    \max_{\substack{i,j \in \{1, \ldots, K\} \\ i \ne j}}\,
    \frac{\mu[\cS_{i}]\, \mu[\cS_{j}]}{\capacity(M_i, M_{j})}\,
    \Big( 1 - c \sqrt{\varrho} \Big),
    \\
    \label{eq:thm:PI:constant:sets:ub}
    C_{\PI}
    &\;\leq\;
    \frac{1}{2}\,
    \sum_{\substack{i,j=1\\i \ne j}}^K
    \frac{\mu[\cS_{i}]\, \mu[\cS_{j}]}{\capacity(M_i,M_j)}\,
    \bra*{ 1 + c \sqrt{C_{\PI,\cM}\bra{\varrho  + \eta}} }.
  \end{align}
\end{thm}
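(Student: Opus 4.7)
For the lower bound, I would pick the pair $(i,j)$ achieving the maximum on the right-hand side of~\eqref{eq:thm:PI:constant:sets:lb} and test the variational problem~\eqref{eq:def:PI-constant} with the equilibrium potential $f = h_{M_i,M_j}$. By~\eqref{eq:capacity:def} one has $\cE(f) = \capacity(M_i,M_j)$, so it suffices to establish $\var_\mu[h_{M_i,M_j}] \geq \mu[\cS_i]\mu[\cS_j](1 - c\sqrt{\varrho})$. Applying Jensen's inequality to the double-sum formula for the variance restricted to pairs $x\in\cS_i,\,y\in\cS_j$ yields
\[
  \var_\mu[h_{M_i,M_j}] \;\geq\; \mu[\cS_i]\,\mu[\cS_j]\,\bra[\big]{\Mean_{\mu_i}[h_{M_i,M_j}] - \Mean_{\mu_j}[h_{M_i,M_j}]}^{\!2}.
\]
Since $h_{M_j,M_i} = 1-h_{M_i,M_j}$, an application of Lemma~\ref{lem:est:L1-L2-h_ab}~\eqref{eq:est:L1/2:h_ab:sets} with $p=2$ (once on each of $\mu_i$, $\mu_j$) combined with Jensen gives $\Mean_{\mu_i}[h_{M_i,M_j}] \geq 1-\sqrt{2\varrho}$ and $\Mean_{\mu_j}[h_{M_i,M_j}] \leq \sqrt{2\varrho}$, which closes the lower bound after maximization over $(i,j)$.

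For the upper bound, I would fix $f\in\ell^2(\mu)$ and start from the variance decomposition~\eqref{eq:var:split1} over the partition $\cF$. The within-set term $\sum_i\mu[M_i]\,\var_{\mu_{M_i}}[f]$ is already controlled by $C_{\PI,\cM}\,\cE(f)$ via Assumption~\ref{ass:PI+LSI:metastable:sets}~i), so it contributes only to the error. The main work is to bound $\var_\mu[\tilde f]$, with $\tilde f = \Mean_\mu[f\,|\,\cF]$. The key step is to introduce the piecewise-constant auxiliary function $g(x) = c_i := \Mean_{\mu_{M_i}}[f]$ on each valley $\cS_i$, so that $\var_\mu[g] = \tfrac12\sum_{i\ne j}\mu[\cS_i]\mu[\cS_j](c_i-c_j)^2$ carries exactly the sharp prefactor appearing in~\eqref{eq:thm:PI:constant:sets:ub}. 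The triangle inequality $\var_\mu[\tilde f]^{1/2} \leq \var_\mu[g]^{1/2} + \var_\mu[\tilde f - g]^{1/2}$ separates the main term from the correction $\tilde f - g$, which vanishes on $\bigcup_i M_i$ and is therefore estimated valley-by-valley through the capacitary $\ell^2$-bound~\eqref{eq:est:L2:sets} of Lemma~\ref{lem:est:L1-L2-h_ab}.

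To bound $(c_i-c_j)^2$, I would insert the last-exit biased distributions as intermediate averages,
\[
  c_i - c_j \;=\; \bra[\big]{c_i - \Mean_{\nu_{M_i,M_j}}[f]} + \bra[\big]{\Mean_{\nu_{M_i,M_j}}[f] - \Mean_{\nu_{M_j,M_i}}[f]} + \bra[\big]{\Mean_{\nu_{M_j,M_i}}[f] - c_j}.
\]
The outer brackets are controlled by Cauchy--Schwarz against the regularity bound of Assumption~\ref{ass:regularity}, yielding contributions of order $\eta\,\mu[M_i]\var_{\mu_{M_i}}[f]/\capacity(M_i,M_j)$. The middle bracket is handled by the Green identity $\cE(f,h_{M_i,M_j}) = \capacity(M_i,M_j)\bra*{\Mean_{\nu_{M_i,M_j}}[f] - \Mean_{\nu_{M_j,M_i}}[f]}$ followed by Cauchy--Schwarz, producing the principal contribution $\cE(f)/\capacity(M_i,M_j)$. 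Summing against the weights $\mu[\cS_i]\mu[\cS_j]$ assembles both the main term of~\eqref{eq:thm:PI:constant:sets:ub} and an error of the form $\eta\,C_{\PI,\cM}\cE(f)$ on the relative scale of this main term.

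The hardest step will be the bookkeeping that merges the three error contributions — the $C_{\PI,\cM}$-error from the inner variance, the $\eta$-error from the regularity insertion, and the $\sqrt{\varrho}$-error from the capacitary bound on $\tilde f - g$ — so that the expansion $(\sqrt{A}+\sqrt{B})^2 = A + 2\sqrt{AB} + B$ produces exactly the advertised relative error $\sqrt{C_{\PI,\cM}(\varrho+\eta)}$ while preserving the sharp prefactor $\tfrac12\sum_{i\ne j}\mu[\cS_i]\mu[\cS_j]/\capacity(M_i,M_j)$. A secondary difficulty is the passage from $\mu[M_i]$-prefactors (which naturally arise from the regularity estimate) to $\mu[\cS_i]$-prefactors (which appear in the target), a gap that is bridged by the harmonic-neighborhood control in Lemma~\ref{lem:harm:neighborhood}.
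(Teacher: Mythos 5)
Your proposal follows essentially the same strategy as the paper: the lower bound via the test function $h_{M_i,M_j}$ together with the $\ell^p$-estimate of Lemma~\ref{lem:est:L1-L2-h_ab}, and the upper bound via the variance splitting~\eqref{eq:var:split1}, a local Poincar\'e estimate (Lemma~\ref{lemma:local:PI}), and a mean-difference estimate using the last-exit biased distributions, the Green identity $\langle -Lh_{M_i,M_j},f\rangle_\mu = \capacity(M_i,M_j)(\Mean_{\nu_{M_i,M_j}}[f]-\Mean_{\nu_{M_j,M_i}}[f])$, and Assumption~\ref{ass:regularity} (Lemma~\ref{lemma:mean:diff}). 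The one organizational difference is that the paper does not pass through the triangle inequality with a piecewise-constant auxiliary $g$; instead it applies the projection property once more to split $\var_\mu[\tilde f]$ exactly into valley-wise variances plus the weighted differences $(\Mean_{\mu_i}[f]-\Mean_{\mu_j}[f])^2$ (see~\eqref{eq:var:split2}), which produces the same three error sources, so the two routes are functionally equivalent. One small correction: Lemma~\ref{lem:harm:neighborhood} is not used in the proof of Theorem~\ref{thm:PI:metastable}; the $\mu[M_i]$-to-$\mu[\cS_i]$ mismatch you flag is harmless because the regularity term yields a relative error $\eta\,\mu[M_i]\,C_{\PI,i}\leq\eta\,C_{\PI,\cM}$ after weighting by $\mu[\cS_i]\mu[\cS_j]$, and the leftover additive $C_{\PI,\cM}\cE(f)$ is absorbed via~\eqref{eq:HitProbCap}; Lemma~\ref{lem:harm:neighborhood} only enters the lower bound of the logarithmic Sobolev estimate, Theorem~\ref{thm:LSI:metastable}.
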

\begin{rem}
  It is possible to formulate a result with asymptotically matching upper and lower bounds for $C_{\PI}$ under suitable non-degeneracy assumption. These essentially demand that one of the term in the right-hand side of~\eqref{eq:thm:PI:constant:sets:ub} dominates the others.
\end{rem}
Let $\cG \ldef \sigma(\cS_i : i = 1, \ldots, K)$ be the $\sigma$-algebra generated by the sets of the metastable partition.  Since $M_i \in \cS_i$ for all $i=1,\ldots, K$, we have $\cG \subset \cF$ by construction of $\cF$, cf.~\eqref{def:F:cond:EX}.  We denote by $\Mean_{\mu}[f \,|\, \cG]$ the conditional expectation given $\cG$.  In order to prove Theorem~\ref{thm:PI:metastable} we use again the projection property of the conditional expectation to further split the variance $\var_{\mu}[\Mean_{\mu}[f \,|\, \cF]]$ into the local variances and the mean difference
\begin{align}\label{eq:var:split2}
  &\var_{\mu}\!\big[\Mean_{\mu}[f \,|\, \cF]\big]
  \nonumber\\[.5ex]
  &\mspace{36mu}=\;
  \sum_{i=1}^K\, \mu[\cS_i]\, \var_{\mu_i}\!\big[\Mean[f \,|\, \cF]\big]
  \,+\,
  \frac{1}{2}\, \sum_{i,j = 1}^K
  \mu[\cS_i]\, \mu[\cS_j]\,
  \bra*{\Mean_{\mu_i}[f] - \Mean_{\mu_j}[f]}^{\!2}.
\end{align}
Therewith, the proof of Theorem~\ref{thm:PI:metastable} consists in bounding both the local variances and the mean difference in terms of the Dirichlet form. Bounding the local variances is established by local Poincar\'e inequalities, which are a consequence of Lemma~\ref{lem:est:L1-L2-h_ab}.
\begin{lem}[Local Poincar\'e inequality]\label{lemma:local:PI}
  Suppose that Assumption~\ref{ass:PI+LSI:metastable:sets}~i) is satisfied.  Then, for any $f \in \ell^2(\mu)$ and $i \in \{1, \ldots, K\}$,
  \begin{align}\label{eq:local:PI:metastable}
    \var_{\mu_i}\pra*{\Mean_{\mu}[f \,|\, \cF]}
    &\;\leq\;
    \Mean_{\mu_i}\pra*{\bra[\big]{\Mean_{\mu}[f \,|\, \cF] - \Mean_{\mu_{M_i}}\![f]}^2}
    \nonumber\\[.5ex]
    &\;\leq\;
    c \, \frac{C_{\PI, \cM} \, \varrho }{\mu[\cS_i]}\;
    \bigg(\!
      \max_{M \in \cM}
      \prob_{\mu_{M}}\!\big[
        \tau_{{\scriptscriptstyle \bigcup_{j=1}^K} M_j \setminus M} < \tau_{M}
      \big]
    \bigg)^{\!\!-1}\!
    \cE(f).
  \end{align}
\end{lem}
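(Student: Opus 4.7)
The plan is to establish the two inequalities separately. The first, $\var_{\mu_i}[\Mean_\mu[f\,|\,\cF]] \leq \Mean_{\mu_i}[(\Mean_\mu[f\,|\,\cF] - \Mean_{\mu_{M_i}}\![f])^2]$, is immediate from the variational characterization $\var_{\mu_i}[\phi] = \inf_{c\in\R}\Mean_{\mu_i}[(\phi-c)^2]$ with the suboptimal choice $c = \Mean_{\mu_{M_i}}\![f]$. For the second inequality, I would set $g \ldef \Mean_\mu[f\,|\,\cF]$. Since $\cF$ treats each metastable set $M_j \in \cM$ as a single atom and each point $x \in \cS \setminus \bigcup_j M_j$ as its own atom, $g$ equals the constant $\Mean_{\mu_{M_j}}\![f]$ on $M_j$ and coincides with $f$ everywhere else. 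Hence $h \ldef g - \Mean_{\mu_{M_i}}\![f]$ vanishes on $M_i$, so eq.~\eqref{eq:est:L2:sets} of Lemma~\ref{lem:est:L1-L2-h_ab} applies to $h$ and gives
\[
  \Mean_{\mu_i}[h^2]
  \;\leq\;
  \frac{4\varrho}{\mu[\cS_i]}\,
  \bigg(\max_{M \in \cM}
    \prob_{\mu_{M}}\!\big[\tau_{{\scriptscriptstyle \bigcup_j} M_j \setminus M} < \tau_{M}\big]
  \bigg)^{\!\!-1}\!\cE(h),
\]
with $\cE(h) = \cE(g)$ since the Dirichlet form is invariant under additive constants. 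The proof therefore reduces to the Dirichlet-form bound $\cE(g) \leq c\, C_{\PI,\cM}\,\cE(f)$ for some universal constant $c$.

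To prove this bound, I would expand $\cE(g) = \tfrac{1}{2}\sum_{x,y}\mu(x)p(x,y)(g(x)-g(y))^2$ and partition the pairs $(x,y)$ according to the location of the endpoints: (i)~both in $\cS \setminus \bigcup_j M_j$; (ii)~exactly one of them in some $M_j$; (iii)~endpoints in two distinct metastable sets; (iv)~both in a common $M_j$. Type~(iv) contributes zero since $g$ is constant on each $M_j$, and type~(i) reproduces the matching piece of $\cE(f)$. For types~(ii) and~(iii), the idea is to decompose $g(x)-g(y) = (g(x) - f(x)) + (f(x) - f(y)) + (f(y) - g(y))$ (with the boundary terms vanishing wherever $g$ and $f$ agree) and apply $(a_1+\cdots+a_k)^2 \leq k\sum_\ell a_\ell^2$. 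The $(f(x)-f(y))^2$ pieces then reassemble into a bounded multiple of $\cE(f)$, while the remaining pieces collapse, via $\sum_y p(x,y) = 1$ for the discrete-time chain, to
\[
  \sum_{x \in M_j} \mu(x)\,\bra[\big]{f(x) - \Mean_{\mu_{M_j}}\![f]}^2
  \;=\;
  \mu[M_j]\,\var_{\mu_{M_j}}\![f].
\]
By Assumption~\ref{ass:PI+LSI:metastable:sets}(i), each such term is at most $\mu[M_j]\,C_{\PI,j}\,\cE(f)$, and summing over $j$ yields the desired $C_{\PI,\cM}\,\cE(f)$-bound.

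The main obstacle is the bookkeeping in this case analysis: verifying that every residual term produced by inserting $f$ at an interior endpoint of a metastable set can indeed be absorbed either into $\cE(f)$ or into a local-variance term controlled by $C_{\PI,j}$, and that the trailing sums $\sum_y p(x,y)$ really do collapse cleanly (which is precisely where discrete time is used). Apart from this, the proof is a direct combination of the variational characterization of variance with the $\ell^2$-norm estimate~\eqref{eq:est:L2:sets} already obtained from the capacitary inequality.
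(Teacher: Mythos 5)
Your proposal follows the paper's proof step for step: the first inequality is the variational characterization of the variance, the second applies the $\ell^2$-estimate \eqref{eq:est:L2:sets} to the function $\Mean_{\mu}[f\,|\,\cF]-\Mean_{\mu_{M_i}}[f]$ (which vanishes on $M_i$), and the remaining bound $\cE(\Mean_{\mu}[f\,|\,\cF])\leq c\,C_{\PI,\cM}\,\cE(f)$ is obtained by inserting $f$ at each endpoint and controlling the boundary residues by $\sum_i\mu[M_i]\var_{\mu_{M_i}}[f]\leq C_{\PI,\cM}\cE(f)$. The only cosmetic difference is that the paper packages the decomposition via Young's inequality with an optimized parameter $\delta=\sqrt{2C_{\PI,\cM}}$ rather than your crude $(a_1+\dots+a_k)^2\leq k\sum a_\ell^2$; since $C_{\PI,\cM}\geq 1$ both yield the same $O(C_{\PI,\cM})$ bound.
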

\begin{proof}
  By noting that $\var_{\mu_i}[\Mean_{\mu}[f \,|\, \cF]] = \min_{a \in \mathbb{R}}\Mean_{\mu_i}[(\Mean_{\mu}[f \,|\, \cF] - a)^2]$, the first estimate in~\eqref{eq:local:PI:metastable} is immediate.  Moreover, the function $x \mapsto \Mean_{\mu}[f \,|\, \cF](x) - \Mean_{\mu_{M_i}}\![f]$ vanishes on $M_i$.  Hence, by \eqref{eq:est:L2:sets} we obtain
  \begin{align*}
    \Mean_{\mu_i}\pra*{(\Mean_{\mu}[f \,|\, \cF] - \Mean_{\mu_{M_i}}\![f])^2}
    \;\leq\;
    \frac{4 \varrho}{\mu[\cS_i]}\;
    \bigg(
       \max_{M \in \cM}
       \prob_{\mu_{M}}\!\big[
         \tau_{{\scriptscriptstyle \bigcup_{j=1}^K} M_j \setminus M} < \tau_{M}
       \big]
     \bigg)^{\!\!-1}
    \cE\big(\!\Mean_{\mu}[f \,|\, \cF] \big).
  \end{align*}
  Thus, we are left with bounding $\cE\big(\Mean_{\mu}[f \,|\, \cF])$ from above by $\cE(f)$. For any $\delta > 0$ by Young's inequality, that reads $|a b| \leq \delta a^2 + b^2 / (4\delta)$, we get for any $x, y \in \cS$
  \begin{align*}
    &\bra*{\Mean_{\mu}[f \,|\, \cF](x) \,-\, \Mean_{\mu}[f \,|\, \cF](y)}^{2}
    \\
    &\mspace{36mu}\leq\;
    \big(1 + 2\delta \big)\, \big(f(x) - f(y))^2
    \,+\,
    \Big(2 + \frac{1}{\delta}\Big)\,
    \sum_{z \in \{x,y\}}\! \big(f(z) - \Mean_{\mu}[f \,|\, \cF](z) \big)^{\!2}.
  \end{align*}
  Recall that $\Mean_{\mu}[f \,|\, \cF](x) = \Mean_{\mu_{M_i}}[f]$ for any $x \in M_i$.  Since $f(x) - \Mean_{\mu}[f \,|\, \cF](x) = 0$ for any $x \in \cS \setminus \bigcup_{i=1}^K M_i$, we obtain 
  \begin{align*}
    \cE\bra*{\Mean_{\mu}[f \,|\, \cF]}
    &\;\leq\;
    \big(1 + 2\delta \big)\, \cE(f)
    \,+\,
    \Big(2 + \frac{1}{\delta}\Big)\,
    \sum_{i=1}^K \mu[M_i]\, \var_{\mu_{M_i}}[f].
  \end{align*}
  Since $\var_{\mu_{M_i}}\![f] \leq C_{\PI,i}\, \cE(f)$ for any $i = 1, \ldots, K$, the assertion \eqref{eq:local:PI:metastable} follows by choosing $\delta = \sqrt{2 C_{\PI, \cM}}$.
\end{proof}
\begin{lem}[Mean difference estimate]\label{lemma:mean:diff}
  Let Assumptions~\ref{ass:regularity} and \ref{ass:PI+LSI:metastable:sets}~i) be satisfied.  Then, for any $f \in \ell^2(\mu)$ and $M_i, M_j \in \cM$ with $i \ne j$ it holds that
  \begin{align*}
    \bra*{\Mean_{\mu_i}[f] \,-\, \Mean_{\mu_j}[f]}^{\!2}
    \;\leq\;
    \frac{\cE(f)}{\capacity(M_i, M_j)}\;
    \bra*{1 + c \, \sqrt{ C_{\PI,\cM}\bra{\varrho + \eta}}}.
  \end{align*}
\end{lem}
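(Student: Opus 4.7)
The plan is to bridge the means $\Mean_{\mu_i}[f]$ and $\Mean_{\mu_j}[f]$ through four intermediate averages attached to the metastable sets, namely the local averages $\Mean_{\mu_{M_i}}[f], \Mean_{\mu_{M_j}}[f]$ and the last-exit biased averages $\Mean_{\nu_{M_i, M_j}}[f], \Mean_{\nu_{M_j, M_i}}[f]$. Concretely, I write
\begin{align*}
  \Mean_{\mu_i}[f] - \Mean_{\mu_j}[f]
  \;=\;
  T_1 + T_2 + T_3 + T_4 + T_5,
\end{align*}
with $T_1 \ldef \Mean_{\mu_i}[f] - \Mean_{\mu_{M_i}}[f]$, $T_2 \ldef \Mean_{\mu_{M_i}}[f] - \Mean_{\nu_{M_i, M_j}}[f]$, $T_3 \ldef \Mean_{\nu_{M_i, M_j}}[f] - \Mean_{\nu_{M_j, M_i}}[f]$, and $T_4, T_5$ the mirror images of $T_2, T_1$. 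The strategy is to show that $T_3$ carries the leading order $\cE(f)/\capacity(M_i, M_j)$, while $T_1, T_5$ are controlled via the local Poincaré inequality (Lemma~\ref{lemma:local:PI}) and $T_2, T_4$ via the regularity condition (Assumption~\ref{ass:regularity}).

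For the main term $T_3$, I would use the identity $-L h_{M_i, M_j} = e_{M_i, M_j}\, \one_{M_i} - e_{M_j, M_i}\, \one_{M_j}$, which follows directly from the boundary value problem~\eqref{eq:equi:potential}. Together with the definitions of $\nu_{\cdot, \cdot}$ and of the capacity, this yields $\cE(f, h_{M_i, M_j}) = \skp{f, -L h_{M_i, M_j}}_{\mu} = \capacity(M_i, M_j)\, T_3$, so that the Cauchy-Schwarz inequality for the Dirichlet form gives
\begin{align*}
  T_3^2
  \;\leq\;
  \frac{\cE(f)\, \cE(h_{M_i, M_j})}{\capacity(M_i, M_j)^2}
  \;=\;
  \frac{\cE(f)}{\capacity(M_i, M_j)}.
\end{align*}

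To handle the regularity term $T_2$, I would rewrite it as a covariance, $T_2 = -\cov_{\mu_{M_i}}(f, \nu_{M_i, M_j}/\mu_{M_i})$, and apply Cauchy-Schwarz combined with Assumption~\ref{ass:regularity} and Assumption~\ref{ass:PI+LSI:metastable:sets}~i) to obtain
\begin{align*}
  T_2^2
  \;\leq\;
  \var_{\mu_{M_i}}[f]\, \var_{\mu_{M_i}}\!\big[\nu_{M_i, M_j} / \mu_{M_i}\big]
  \;\leq\;
  \frac{C_{\PI, \cM}\, \eta}{\capacity(M_i, M_j)}\, \cE(f),
\end{align*}
using $\mu[M_i]\, C_{\PI, i} \leq C_{\PI, \cM}$. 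For $T_1$, note that the partition $\cF$ resolves every point outside $\bigcup_k M_k$, hence $\Mean_{\mu_i}\!\big[\Mean_{\mu}[f \mid \cF]\big] = \Mean_{\mu_i}[f]$; Jensen's inequality then delivers $T_1^2 \leq \Mean_{\mu_i}\!\big[\bra*{\Mean_{\mu}[f \mid \cF] - \Mean_{\mu_{M_i}}[f]}^2\big]$, so Lemma~\ref{lemma:local:PI} bounds $T_1^2$ up to a numerical factor by $C_{\PI,\cM}\, \varrho\, \mu[\cS_i]^{-1}\, (\max_M \prob_{\mu_M}[\tau_{\bigcup_k M_k \setminus M} < \tau_M])^{-1}\, \cE(f)$. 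The maximum in the denominator is at least $\prob_{\mu_{M_i}}[\tau_{\bigcup_{k \neq i} M_k} < \tau_{M_i}] \geq \capacity(M_i, M_j)/\mu[M_i]$, since $M_j \subset \bigcup_{k \neq i} M_k$; together with $\mu[M_i] \leq \mu[\cS_i]$ this yields $T_1^2, T_5^2 \leq c\, C_{\PI, \cM}\, \varrho\, \cE(f)/\capacity(M_i, M_j)$.

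The final step is a Young-type expansion
\begin{align*}
  (T_1 + T_2 + T_3 + T_4 + T_5)^2
  \;\leq\;
  (1+\delta)\, T_3^2 \,+\, 4\,(1 + \delta^{-1})\, (T_1^2 + T_2^2 + T_4^2 + T_5^2),
\end{align*}
optimised at $\delta = \sqrt{C_{\PI, \cM}(\varrho + \eta)}$, which collapses the four error contributions into the claimed error $\sqrt{C_{\PI,\cM}(\varrho + \eta)}$ under the standing assumption $C_{\PI,\cM}(\varrho + \eta) \ll 1$. The main obstacle, in my view, is less the application of Cauchy-Schwarz itself than the bookkeeping: one must verify that all four correction terms reduce to the \emph{same} dimensional quantity $\cE(f)/\capacity(M_i, M_j)$. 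The critical step is recognising $\mu[M_i]/\capacity(M_i, M_j)$ as the natural scale that absorbs the maximal escape probability produced by Lemma~\ref{lemma:local:PI}; this relies precisely on the comparison of capacities to a \emph{single} competing metastable set $M_j$ rather than to the whole set $\bigcup_{k \neq i} M_k$.
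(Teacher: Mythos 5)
Your proof is correct and follows essentially the same route as the paper's: the telescoping decomposition through the five intermediate means $\Mean_{\mu_i}[f]\to\Mean_{\mu_{M_i}}[f]\to\Mean_{\nu_{M_i,M_j}}[f]\to\Mean_{\nu_{M_j,M_i}}[f]\to\Mean_{\mu_{M_j}}[f]\to\Mean_{\mu_j}[f]$ is precisely the three-term split of each $\Mean_{\mu_k}[f]$ that the paper uses, and the handling of each piece (Jensen plus Lemma~\ref{lemma:local:PI} for $T_1,T_5$, Cauchy--Schwarz with Assumptions~\ref{ass:regularity} and~\ref{ass:PI+LSI:metastable:sets}~i) for $T_2,T_4$, the Dirichlet-form Cauchy--Schwarz identity for $T_3$, and Young's inequality at scale $\delta=\sqrt{C_{\PI,\cM}(\varrho+\eta)}$) coincides with the paper's. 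Your version is in fact slightly more explicit than the paper's on the key identity $-Lh_{M_i,M_j}=e_{M_i,M_j}\one_{M_i}-e_{M_j,M_i}\one_{M_j}$ that reduces $T_3$ to $\cE(f,h_{M_i,M_j})/\capacity(M_i,M_j)$, which the paper dispatches with the phrase ``summation by parts together with Cauchy--Schwarz.''
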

\begin{proof}
  For $M_i, M_j \in \cM$ with $i \ne j$ let $\nu_{M_i, M_j}$ be the last-exit biased distribution as defined in \eqref{eq:def:LastExitBiasedDistri}, and denote by $g_{i,j} \ldef \nu_{M_i, M_j} / \mu_{\mu_{M_i}}$ the relative density of $\nu_{M_i,M_j}$ with respect to $\mu_{M_i}$.  Then, it holds that
  \begin{align*}
    \Mean_{\mu_i}[f]
    &\;=\;
    \Mean_{\mu_i}\pra*{\Mean_{\mu}[f \,|\, \cF]}
    \\[.5ex]
    &\;=\;
    \Mean_{\nu_{M_i, M_j}}\![f]
    \,+\,
    \Mean_{\mu_i}\pra*{ \Mean_{\mu}[f \,|\, \cF] - \Mean_{\mu_{M_i}}[f] }
    \,-\, \Mean_{\mu_{M_i}}\pra*{ \big(g_{i,j} - 1\big)\, f }.
  \end{align*}
  Thus, by applying Young's inequality, we obtain for any $\delta > 0$ and $f \in \ell^2(\mu)$,
  \begin{align*}
    \bra*{\Mean_{\mu_i}[f] - \Mean_{\mu_j}[f]}^{\!2}
    &\;\leq\;
    (1 + \delta)\,
    \bra*{\Mean_{\nu_{M_i, M_j}}\![f] \,-\, \Mean_{\nu_{M_j, M_i}}[f]}^{\!2}
    \\[.5ex]
    &\mspace{36mu}+\,
    2\Big(1 + \frac{1}{\delta}\Big)
    \sum_{k \in \{i,j\}}\mspace{-3mu}
    \Mean_{\mu_k}\pra*{ \Mean_{\mu}[f \,|\, \cF] - \Mean_{\mu_{M_k}}[f] }^2
    \\[.5ex]
    &\mspace{36mu}+\,
    2\Big(1 + \frac{1}{\delta}\Big)\,
    \Big(
      \Mean_{\mu_{M_i}}\pra*{\big(g_{i,j} - 1\big)f }^2
      \,+\, \Mean_{\mu_{M_j}}\pra*{\big(g_{j,i} - 1\big)f }^2
    \Big).
  \end{align*}
  Let $h_{M_i, M_j}$ be the equilibrium potential of the pair $(M_i, M_j)$.  Observe that a summation by parts together with an application of the Cauchy--Schwarz inequality yields
  \begin{align*}
    \bra*{ \Mean_{\nu_{M_i, M_j}}\![f] \,-\, \Mean_{\nu_{M_j,M_i}}\![f] }^{\!2}
    \;\leq\;
    \frac{\cE(f)}{\capacity(M_i, M_j)}.
  \end{align*}
  Recall that the function $x \mapsto \Mean_{\mu}[f \,|\, \cF](x) - \Mean_{\mu_{M_i}}[f]$ vanishes on $M_i$.  Thus, \eqref{eq:local:PI:metastable} implies that
  \begin{align*}
    \Mean_{\mu_i}\pra*{
      \big(\Mean_{\mu}[f \,|\, \cF] - \Mean_{\mu_{M_i}}[f]\big)^2
    }
    \;\leq\;
    c\, C_{\PI, \cM}\, \varrho\,
    \frac{\cE(f)}{\capacity(M_i, M_j)},
  \end{align*}
  where we used that $\max_{M \in \cM} \prob_{\mu_{M}}\!\big[ \tau_{{\scriptscriptstyle \bigcup_{j=1}^K} M_j \setminus M} < \tau_{M} \big] \geq \capacity(M_i, M_j) / \mu[M_i]$.  Further, the covariance between $g_{i,j}$ and $f$, thanks to Assumptions~\ref{ass:regularity} and~\ref{ass:PI+LSI:metastable:sets}~i), is bounded from above by
  \begin{align*}
    \Mean_{\mu_{M_i}}\pra*{\big(g_{i,j} - 1\big)f }^2
    \;\leq\;
    \var_{\mu_{M_i}}\![g_{i,j}]\,
    \var_{\mu_{M_i}}\![f]
    \;\leq\;
    \eta\, \mu[M_i]\, C_{\PI, i}\, \frac{\cE(f)}{\capacity(M_i, M_j)}.
  \end{align*}
  By combining the estimates above and choosing $\delta = \sqrt{C_{\PI, \cM}\bra{\varrho  + \eta}}$, we obtain the assertion.
\end{proof}
A combination of the splitting~\eqref{eq:var:split2} with the Lemmas~\ref{lemma:local:PI} and~\ref{lemma:mean:diff} gives the upper bound~\eqref{eq:thm:PI:constant:sets:ub} of Theorem~\ref{thm:PI:metastable}. The proof is complemented by a suitable test function yielding the lower bound~\eqref{eq:thm:PI:constant:sets:lb}.
\begin{proof}[Proof of Theorem~\ref{thm:PI:metastable}]
  The lower bound of $C_{\PI}$ is an immediate consequence of the variational definition of $C_{\PI}$; cf.\ \eqref{eq:def:PI-constant}.  Indeed, by choosing the equilibrium potential $h_{M_i, M_j}$ for any $M_i, M_j \in \cM$ with $i \ne j$ as a test function, we deduce from \eqref{eq:var:split1} and \eqref{eq:var:split2} that
  \begin{align*}
    \var_{\mu}\pra*{h_{M_i, M_j}}
    &\;\geq\;
    \mu[\cS_i]\, \mu[\cS_j]\;
    \bra*{ \Mean_{\mu_i}[h_{M_i, M_j}] - \Mean_{\mu_j}[h_{M_i, M_j}] }^{\!2}
    \\[.5ex]
    &\;=\;
    \mu[\cS_i]\, \mu[\cS_j]\;
    \bra*{ 1 - \Mean_{\mu_i}[h_{M_j, M_i}] - \Mean_{\mu_j}[h_{M_i, M_j}] }^{\!2}.
  \end{align*}
  Thus, in view of \eqref{eq:est:L1:sets}, we obtain that $\var_{\mu}[f] \geq \mu[\cS_i] \mu[\cS_j] \big(1 - 8 \varrho \big)^2$.  Since $\cE(h_{M_i, M_j}) = \capacity(M_i, M_j)$, \eqref{eq:thm:PI:constant:sets:lb} follows by optimizing over all $M_i \ne M_j \in \cM$.

  For the upper bound, observe that by using~\eqref{eq:Capacity:HittingTimes} and~\eqref{eq:cap:monotonicity},
  \begin{align}\label{eq:HitProbCap}
    &\bigg(\!
      \max_{M \in \cM}
      \prob_{\mu_{M}}\!\big[
        \tau_{{\scriptscriptstyle \bigcup_{j=1}^K} M_j \setminus M} < \tau_{M}
      \big]
    \bigg)^{\!\!-1}
    \nonumber\\[.5ex]
    &\mspace{36mu}\leq\;
    \frac{1}{|\cM|-1}\,
    \sum_{\substack{i,j=1 \\ i \ne j}}^K \, 
    \frac{\mu[\cS_i]}{\prob_{\mu_{M_j}}\pra[\big]{\tau_{M_i} < \tau_{M_j}}}
    \;\leq\;
    \frac{1}{|\cM|-1}\,
    \sum_{\substack{i,j=1 \\ i \ne j}}^K \,
    \frac{\mu[\cS_i]\, \mu[\cS_j]}{\capacity(M_i, M_j)}.
  \end{align}
  Hence, by an application of Lemma~\ref{lemma:local:PI}, it follows that
  \begin{align*}
    \sum_{i=1}^K
    \mu[\cS_i]\, \var_{\mu_i}\pra*{\Mean_{\mu}[f \,|\, \cF]}
    \overset{\eqref{eq:local:PI:metastable}}{\;\leq\;}
    c\,C_{\PI,\cM} \, \varrho \, \frac{1}{2}
    \sum_{\substack{i,j=1 \\ i \ne j}}^K
    \frac{\mu[\cS_i]\, \mu[\cS_j]}{\capacity(M_i, M_j)}\;
    \cE(f).
  \end{align*}
  Thus, a combination of \eqref{eq:var:split1} and \eqref{eq:var:split2} together with Lemma~\ref{lemma:mean:diff} yields \eqref{eq:thm:PI:constant:sets:ub} up to an additive factor $C_{\PI,\cM}$. To bound this additive error term, notice that
  \begin{align*}
    C_{\PI,\cM} 
    \;\leq\;  
    C_{\PI,\cM}\, \varrho\,
    \bigg(\!
      \max_{M \in \cM}
      \prob_{\mu_{M}}\!\big[
        \tau_{{\scriptscriptstyle \bigcup_{j=1}^K} M_j \setminus M} < \tau_{M}
      \big]
    \bigg)^{\!\!-1}
    \overset{\eqref{eq:HitProbCap}}{\!\leq\;}
    \frac{C_{\PI,\cM} \, \varrho}{|\cM| - 1}\, 
    \sum_{\substack{i,j=1 \\ i \ne j}}^K 
    \frac{\mu[\cS_i] \, \mu[\cS_j]}{\capacity(M_i, M_j)},
  \end{align*}
  which shows that $C_{\PI,\cM}$ can be absorbed into the right-hand side of~\eqref{eq:thm:PI:constant:sets:ub}.
\end{proof}

\subsection{Logarithmic Sobolev inequality}
\label{s:LSI}
In this subsection, we focus on sharp estimates of the logarithmic Sobolev constant in the context of metastable Markov chains. Again, we denote by $c$ a numerical finite constant, which may change from line to line.
\begin{thm}\label{thm:LSI:metastable}
  Suppose that the Assumptions~\ref{ass:regularity}, \ref{ass:PI+LSI:metastable:sets} and~\eqref{ass:Cmass} hold.  Then,
  \begin{align}
    \label{eq:thm:LSI:constants:lb}
    C_{\LSI}
    &\;\geq\;
    \max_{\substack{i,j \in \{1, \ldots, K\} \\ i \ne j}}\,
    \frac{\mu[\cS_{i}]\, \mu[\cS_{j}]}{\Lambda(\mu[\cS_i], \mu[\cS_j])}\,
    \frac{1}{\capacity(M_i, M_{j})}\,
    \Big( 1 - c \sqrt{\varrho} \Big)^2
    \\
    \label{eq:thm:LSI:constants:ub}
    C_{\LSI}
    &\;\leq\;
    \sum_{\substack{i,j=1\\i \ne j}}^K
    \frac{\mu[\cS_{i}]\, \mu[\cS_{j}]}{\Lambda(\mu[\cS_i], \mu[\cS_j])}\,
    \frac{1}{\capacity(M_i,M_j)}\,
    \Big( 1 + c\sqrt{C_{\textup{mass}} \, C_{\LSI,\cM}\bra{\varrho + \eta} } \Big) .
  \end{align}
\end{thm}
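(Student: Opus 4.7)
The plan is to proceed in parallel with the proof of Theorem~\ref{thm:PI:metastable}, substituting variance by entropy at each step. Starting from the entropy splitting~\eqref{eq:ent:split1}, a further application of the projection property with respect to the coarser $\sigma$-algebra $\cG \ldef \sigma(\cS_1, \ldots, \cS_K)$ produces the three-way decomposition
\begin{align*}
  \Ent_{\mu}[f^2]
  \;=\;
  \sum_{i=1}^K \mu[M_i]\, \Ent_{\mu_{M_i}}[f^2]
  \,+\,
  \sum_{i=1}^K \mu[\cS_i]\, \Ent_{\mu_i}\!\big[\Mean_{\mu}[f^2 \mid \cF]\big]
  \,+\,
  \Ent_{\mu}\!\big[\Mean_{\mu}[f^2 \mid \cG]\big].
\end{align*}
The first summand is absorbed directly into the error via Assumption~\ref{ass:PI+LSI:metastable:sets}(ii), contributing at most $C_{\LSI,\cM}\, \cE(f)$.

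For the local entropy terms I would establish a logarithmic Sobolev analogue of Lemma~\ref{lemma:local:PI}: applying Corollary~\ref{cor:LSI_via_CapInequ} to the conditional measure $\mu_i$ with base point in $M_i$, the required measure-capacity inequality~\eqref{eq:Ent:MC} follows by combining the metastability bound~\eqref{eq:metaest:sets} with $\ln(1+\me^2/\mu_i[A]) \leq C_{\textup{mass}}$ from~\eqref{ass:Cmass}; this is precisely the step at which the $C_{\textup{mass}}$ factor enters. The Young-inequality splitting used in Lemma~\ref{lemma:local:PI} to control $\cE(\Mean_\mu[g \mid \cF])$ by $\cE(g)$ then carries over to $g = \sqrt{\Mean_\mu[f^2 \mid \cF]}$, with Assumption~\ref{ass:PI+LSI:metastable:sets}(ii) replacing its Poincaré counterpart, and yields an overall error of order $C_{\textup{mass}}\, C_{\LSI,\cM}\, \varrho\, \cE(f)$.

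The main contribution $\Ent_{\mu}[\Mean_{\mu}[f^2 \mid \cG]]$ is the entropy of a $K$-valued function taking the constant value $\Mean_{\mu_i}[f^2]$ on each $\cS_i$ under the discrete distribution $(\mu[\cS_i])_{i=1}^K$. The sharp two-point logarithmic Sobolev inequality of Diaconis--Saloff-Coste (extended to $K$ atoms by pairwise decomposition) provides
\begin{align*}
  \Ent_{\mu}\!\big[\Mean_{\mu}[f^2 \mid \cG]\big]
  \;\leq\;
  \sum_{i \ne j}
  \frac{\mu[\cS_i]\, \mu[\cS_j]}{\Lambda(\mu[\cS_i], \mu[\cS_j])}\;
  \Big(\sqrt{\Mean_{\mu_i}[f^2]} - \sqrt{\Mean_{\mu_j}[f^2]}\Big)^{\!2}.
\end{align*}
The elementary bridging $\big|\sqrt{\Mean_{\mu_i}[f^2]} - |\Mean_{\mu_i}[f]|\big| \leq \sqrt{\var_{\mu_i}[f]}$ together with Young's inequality reduces these square-root differences to the mean differences $(\Mean_{\mu_i}[f] - \Mean_{\mu_j}[f])^2$ plus local-variance corrections, the latter absorbed through the local Poincaré inequality of Lemma~\ref{lemma:local:PI}. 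The mean-difference estimate of Lemma~\ref{lemma:mean:diff} then delivers the sharp prefactor $\cE(f)/\capacity(M_i, M_j)$ with the same $\sqrt{\varrho + \eta}$-error structure as in the Poincaré case, establishing~\eqref{eq:thm:LSI:constants:ub}.

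The lower bound~\eqref{eq:thm:LSI:constants:lb} is obtained by testing~\eqref{eq:def:LSI-constant} with $f = \sqrt{a}\, h_{M_i, M_j} + \sqrt{b}\,(1 - h_{M_i, M_j})$ and optimizing over $a, b > 0$: the Dirichlet form equals $(\sqrt{a}-\sqrt{b})^2 \capacity(M_i, M_j)$ exactly, while Lemma~\ref{lem:est:L1-L2-h_ab} ensures $\Mean_{\mu_i}[f^2] = a + O(\varrho)$ and $\Mean_{\mu_j}[f^2] = b + O(\varrho)$, so that $\Ent_\mu[f^2]$ reduces to the sharp two-point value $\mu[\cS_i]\mu[\cS_j]/\Lambda(\mu[\cS_i],\mu[\cS_j]) \cdot (\sqrt{a}-\sqrt{b})^2$ up to $O(\sqrt{\varrho})$ error. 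The main technical obstacle is the square-root artifact on the upper-bound side: the optimal $K$-point LSI naturally produces differences of $\sqrt{\Mean_{\mu_i}[f^2]}$ rather than of $\Mean_{\mu_i}[f]$, and the careful absorption of the local-variance corrections into the LSI error is exactly the mechanism responsible for the extra $\sqrt{C_{\textup{mass}}}$-factor in the error term compared to the Poincaré statement.
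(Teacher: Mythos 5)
Your overall strategy is correct and closely mirrors the paper's. For the upper bound you use the same three-way entropy decomposition, the same local logarithmic Sobolev lemma (via the Orlicz--Birnbaum mechanism of Corollary~\ref{cor:LSI_via_CapInequ} with $\ln(1+\me^2/\mu_i[A])$ absorbed into $C_{\textup{mass}}$), and the same final reduction to mean differences $(\Mean_{\mu_i}[f]-\Mean_{\mu_j}[f])^2$ plus local-variance corrections. The only difference there is presentational: where the paper invokes \cite[Corollary~2.8]{MS14} for the inequality $\Ent_\mu[\Mean_\mu[f^2\mid\cG]]\leq \frac{1}{2}\sum_{i\neq j}\frac{\mu[\cS_i]\mu[\cS_j]}{\Lambda(\mu[\cS_i],\mu[\cS_j])}\bigl(\sum_{k\in\{i,j\}}\var_{\mu_k}[f]+(\Mean_{\mu_i}[f]-\Mean_{\mu_j}[f])^2\bigr)$, you re-derive the same content by first stating the $K$-atom LSI in terms of $\sqrt{\Mean_{\mu_i}[f^2]}$ and then bridging to $\Mean_{\mu_i}[f]$ through $|\sqrt{\Mean_{\mu_i}[f^2]}-|\Mean_{\mu_i}[f]||\leq\sqrt{\var_{\mu_i}[f]}$ and Young's inequality. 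That bridging is exactly the argument underlying the cited corollary, so you are rebuilding the ingredient rather than citing it; this is fine, but note that the ``$K$-atom pairwise decomposition'' you invoke with the optimal $\Lambda$-prefactors is itself a nontrivial fact that deserves a reference or a short proof in a write-up.

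The lower bound is where you genuinely deviate from the paper. You test with $f=\sqrt{a}\,h_{M_i,M_j}+\sqrt{b}\,(1-h_{M_i,M_j})$, invoke the projection property $\Ent_\mu[f^2]\geq\Ent_\mu[\Mean_\mu[f^2\mid\cG]]$, and control the conditional second moments with Lemma~\ref{lem:est:L1-L2-h_ab}. The paper instead uses the equilibrium potentials of the harmonic $\delta$-neighborhoods $\cU_{M_i}(\delta,M_j)$ from \eqref{eq:def:neighborhood}, so that the test function is \emph{exactly} constant on enlarged sets covering $(1-\varrho\delta^{-1})$ of each valley's mass (Lemma~\ref{lem:harm:neighborhood}), and the entropy integrand (which is pointwise nonnegative for any constant shift $c$) can be restricted directly to these sets. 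Your route works, with two caveats you gloss over: (a) for $K\geq 3$ the projected entropy is a $K$-atom entropy, not a two-atom one, and reducing it to the $(i,j)$-pair requires the same restriction-by-nonnegativity argument the paper uses; (b) you claim ``$\Mean_{\mu_i}[f^2]=a+O(\varrho)$'', but the contributions from the cross term and from $\Mean_{\mu_i}[h_{M_j,M_i}]$ via \eqref{eq:est:L1:sets} are of order $\varrho\ln(1/\varrho)$ (or $\sqrt{\varrho}$ if one optimizes $\varepsilon$), not $\varrho$. Neither is a real gap --- both produce errors within the stated $(1-c\sqrt{\varrho})^2$ envelope --- but they should be made explicit. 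In exchange, your construction avoids introducing the $\cU$-sets and the auxiliary parameter $\delta$ entirely, working directly with the $\ell^p$-moment bounds already available; this is a legitimate simplification of the lower-bound argument.
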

In order to proof Theorem~\ref{thm:LSI:metastable}, we decompose the entropy $\Ent_{\mu}\!\big[\Mean_{\mu}[f^2 \,|\, \cF]\big]$ in~\eqref{eq:ent:split1} into the local entropies within the sets $\cS_1, \ldots, \cS_K$ and the macroscopic entropy
\begin{align*}
  \Ent_{\mu}\pra*{\Mean_{\mu}[f^2 \,|\, \cF]}
  \;=\;
  \sum_{i=1}^K\, \mu[\cS_i]\, \Ent_{\mu_i}\pra*{\Mean_{\mu}[f^2 \,|\, \cF]}
  \,+\,
  \Ent_{\mu}\pra*{\Mean_{\mu}[f^2 \,|\, \cG]}.
\end{align*}
In the next lemma we derive an upper bound on the local entropies.
\begin{lem}[Local logarithmic Sobolev inequality]\label{lemma:local:LSI}
  Let Assumption~\ref{ass:PI+LSI:metastable:sets}~i) be satisfied, and assume that $C_{\textup{mass}} < \infty$. Then, for any $f \in \ell^2(\mu)$ and $i \in \{1, \ldots, K\}$,
  \begin{align}\label{eq:local:LSI:metastable}
    \Ent_{\mu_i}\pra*{\Mean_{\mu}[f^2 \,|\, \cF]}
    \;\leq\;
    c \frac{C_{\textup{mass}}\, C_{\PI, \cM} \, \varrho}{\mu[\cS_i]}\;
    \bigg(\!
      \max_{M \in \cM}
      \prob_{\mu_{M}}\!\big[
        \tau_{{\scriptscriptstyle \bigcup_{j=1}^K} M_j \setminus M} < \tau_{M}
      \big]
    \bigg)^{\!\!-1}\!
    \cE(f).
  \end{align}
\end{lem}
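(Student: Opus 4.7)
The plan is to mimic the proof of Lemma~\ref{lemma:local:PI} but with the entropy Orlicz pair $(\Phi_{\Ent}, \Psi_{\Ent})$ from Example~\ref{ex:OrliczPairs}~b) in place of $(\Phi_1, \Psi_1)$, combined with the Rothaus-type argument used in the proof of Corollary~\ref{cor:LSI_via_CapInequ}. Concretely, Proposition~\ref{prop:OrliczBirnbaum} will be applied with $\nu = \mu_i$, $B = M_i$, and $K = \me^2$ to the test function
\begin{align*}
  \psi \;\ldef\; \sqrt{\Mean_{\mu}[f^2 \,|\, \cF]} \,-\, \sqrt{\Mean_{\mu_{M_i}}\![f^2]} ,
\end{align*}
which vanishes on $M_i$ because $\Mean_{\mu}[f^2 \,|\, \cF]$ is constant there.

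The measure-capacity inequality required by Proposition~\ref{prop:OrliczBirnbaum} is assembled as follows. The bound
\begin{align*}
  \mu_i[A]
  \;\leq\;
  \frac{\varrho}{\mu[\cS_i]}
  \bigg(
    \max_{M\in\cM}\prob_{\mu_M}\!\big[
      \tau_{{\scriptscriptstyle \bigcup_{j=1}^K} M_j \setminus M} < \tau_M
    \big]
  \bigg)^{\!\!-1}
  \capacity(A, M_i),
  \qquad A \subset \cS \setminus M_i,
\end{align*}
is already established in the proof of Lemma~\ref{lemma:local:PI}. Since $\Psi_{\Ent}^{-1}(r) = \ln(1+r)$ and $\mu_i[A] \geq \min_{x \in \cS_i}\mu_i(x)$ (choose any $x \in A \subset \cS_i$), assumption~\eqref{ass:Cmass} yields $\ln(1 + \me^2/\mu_i[A]) \leq C_{\mathrm{mass}}$, and multiplying the two bounds delivers exactly the LSI-type measure-capacity inequality with constant of order $c\, C_{\mathrm{mass}}\,\varrho/(\mu[\cS_i]\, p_*)$, where $p_*$ abbreviates the maximum above. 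Feeding this into Proposition~\ref{prop:OrliczBirnbaum} and invoking Rothaus exactly as in the proof of Corollary~\ref{cor:LSI_via_CapInequ} gives
\begin{align*}
  \Ent_{\mu_i}\!\big[\Mean_{\mu}[f^2\,|\,\cF]\big]
  \;\leq\;
  \Ent_{\mu_i}[\psi^2] + 2\,\Mean_{\mu_i}[\psi^2]
  \;\leq\;
  \frac{c\, C_{\mathrm{mass}}\,\varrho}{\mu[\cS_i]\, p_*}\, \cE(G),
\end{align*}
where $G \ldef \sqrt{\Mean_{\mu}[f^2\,|\,\cF]}$ and $\cE(\psi) = \cE(G)$ since $\psi - G$ is constant.

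The remaining task, which is the main obstacle, is to compare $\cE(G)$ with $\cE(f)$. Since $G = |f|$ outside $\bigcup_{j=1}^K M_j$ and $G \equiv \sqrt{\Mean_{\mu_{M_j}}[f^2]}$ on each $M_j$, Young's inequality applied as in the proof of Lemma~\ref{lemma:local:PI} reduces the task to controlling $\Mean_{\mu_{M_j}}[(\sqrt{\Mean_{\mu_{M_j}}[f^2]} - |f|)^2]$ for each $j$. A direct computation using $(\sqrt{a} + b)(\sqrt{a} - b) = a - b^2$ shows that this quantity equals
\begin{align*}
  \frac{2\sqrt{\Mean_{\mu_{M_j}}[f^2]}}{\sqrt{\Mean_{\mu_{M_j}}[f^2]} + \Mean_{\mu_{M_j}}[|f|]}\, \var_{\mu_{M_j}}[|f|]
  \;\leq\;
  2\, \var_{\mu_{M_j}}[f],
\end{align*}
which by Assumption~\ref{ass:PI+LSI:metastable:sets}~i) is bounded by $2 C_{\PI, j}\, \cE(f)$. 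Optimizing the Young parameter $\delta = \sqrt{2 C_{\PI,\cM}}$ and using $\cE(|f|) \leq \cE(f)$ yields $\cE(G) \leq c\, C_{\PI, \cM}\, \cE(f)$, which combined with the previous display gives~\eqref{eq:local:LSI:metastable}. In contrast to Lemma~\ref{lemma:local:PI}, where $\cE(\Mean_{\mu}[f\,|\,\cF])$ is directly amenable to Young's inequality, the square root appearing in $G$ produces correction terms $(\sqrt{\Mean[f^2]} - |f|)^2$ that can only be digested through the local Poincaré inequality; this explains the appearance of $C_{\PI, \cM}$ (rather than $C_{\LSI, \cM}$) in the statement.
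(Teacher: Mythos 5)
Your proposal is correct and follows essentially the same route as the paper: the entropy Orlicz pair from Example~\ref{ex:OrliczPairs}~b) and the Rothaus argument from Corollary~\ref{cor:LSI_via_CapInequ} convert the measure--capacity inequality (obtained by combining the bound from Lemma~\ref{lemma:local:PI} with the $C_{\textup{mass}}$ cap on $\ln(1+\me^2/\mu_i[A])$) into a Dirichlet-type bound on $\Ent_{\mu_i}$ in terms of $\cE(\sqrt{\Mean_{\mu}[f^2\,|\,\cF]})$, and the paper too finishes with the Young-inequality comparison to $\cE(f)$ absorbing the correction terms via $C_{\PI,\cM}$. Your explicit computation of $\Mean_{\mu_{M_j}}\big[\big(\sqrt{\Mean_{\mu_{M_j}}[f^2]}-|f|\big)^2\big]\leq 2\var_{\mu_{M_j}}[f]$ via $(\sqrt a+b)(\sqrt a-b)=a-b^2$ is just a more transparent version of the paper's one-line appeal to Jensen's inequality, and your closing remark correctly identifies why $C_{\PI,\cM}$ rather than $C_{\LSI,\cM}$ appears.
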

\begin{proof}
  First, notice that for any $A \subset \cS \setminus M_i$,
  \begin{align*}
    \mu_i[A]\, \ln \bigg(1 + \frac{\me^2}{\mu_i[A]}\bigg)
    &\;\leq\;
    \frac{\max_{x \in \cS_i} \ln(1 + \me^2 / \mu_i(x))}
    {\mu[\cS_i]\, \capacity(A \cap \cS_i, M_i)}\,
    \capacity(A, M_i)
    \\
    &\overset{\!\eqref{eq:metaest:sets}\!}{\;\leq\;}
    C_{\textup{mass}}\, \frac{\varrho}{\mu[\cS_i]}\;
    \bigg(
      \max_{M \in \cM} 
      \prob_{\mu_{M}}\!\big[
        \tau_{{\scriptscriptstyle \bigcup_{j=1}^K} M_j \setminus M} < \tau_{M}
      \big]
    \bigg)^{\!\!-1}\mspace{-10mu}
    \capacity(A, M_i).
  \end{align*}
  Since the function $x \mapsto \Mean_{\mu}[f^2 \,|\, \cF](x)$ is constant on $M_i$, Corollary~\ref{cor:LSI_via_CapInequ} implies that
  \begin{align*}
    \Ent_{\mu_i}\pra*{\Mean_{\mu}[f^2 \,|\, \cF] }
    \;\leq\;
    C_{\textup{mass}}\, \frac{4 \varrho}{\mu[\cS_i]}\;
    \bigg(
      \max_{M \in \cM} 
      \prob_{\mu_{M}}\!\big[
        \tau_{{\scriptscriptstyle \bigcup_{j=1}^K} M_j \setminus M} < \tau_{M}
      \big]
    \bigg)^{\!\!-1}
    \cE\Big(\!\sqrt{\Mean_{\mu}[f^2 \,|\, \cF]}\,\Big).
  \end{align*}
  Thus, we are left with bounding $\cE\big(\!\sqrt{\Mean_{\mu}[f^2 \,|\, \cF]}\big)$ from above with $\cE(f)$.  Applying Young's inequality, we get, for any $\delta > 0$ and $x, y \in \cS$,
  \begin{align*}
    &\Big(
      \sqrt{\Mean_{\mu}[f^2 \,|\, \cF](x)}
      \,-\, \sqrt{\Mean_{\mu}[f^2 \,|\, \cF](y)}\,
    \Big)^{\!2}
    \\
    &\mspace{36mu}\leq\;
    \big(1 + 2\delta \big)\, \big(|f(x)| - |f(y)|)^2
    \,+\,
    \Big(2 + \frac{1}{\delta}\Big)\,
    \sum_{z \in \{x,y\}}\!
    \Big( |f(z)| - \sqrt{\Mean_{\mu}[f^2 \,|\, \cF](z)}\, \Big)^{\!2}.
  \end{align*}
  Since $|f(z)| - \sqrt{\Mean_{\mu}[f^2 \,|\, \cF](z)} = 0$ for any $z \in \cS \setminus \bigcup_{i=1}^K M_i$ and $\Mean_{\mu}[f \,|\, \cF](x) = \Mean_{\mu_{M_i}}[f]$ for any $x \in M_i$, we obtain
  \begin{align*}
    \cE\Big(\!\sqrt{\Mean_{\mu}[f^2 \,|\, \cF]}\, \Big)
    &\;\leq\;
    \big(1 + 2\delta \big)\, \cE(|f|)
    \,+\,
    2\,\Big(2 + \frac{1}{\delta}\Big)\,
    \sum_{i=1}^K \mu[M_i]\, \var_{\mu_{M_i}}[f],
  \end{align*}
  where we additionally exploited the fact that, by Jensens' inequality, 
  \begin{align*}
    \Mean_{\mu_{M_i}}\pra*{\Big(|f| - \sqrt{\Mean_{\mu_{M_i}}[f^2]}\Big)^2}
    \;\leq\;
    2\, \var_{\mu_{M_i}}[f].
  \end{align*}
  Since $\var_{\mu_{M_i}}\![f] \leq C_{\PI,i}\, \cE(f)$ for any $i = 1, \ldots, K$ and $\cE(|f|) \leq \cE(f)$, \eqref{eq:local:LSI:metastable} follows by choosing $\delta = \sqrt{2 C_{\PI, \cM}}$.
\end{proof}
\begin{proof}[Proof of Theorem~\ref{thm:LSI:metastable}]
  In view of the variational definition of $C_{\LSI}$ (cf.\ \eqref{eq:def:LSI-constant}), \eqref{eq:thm:LSI:constants:lb} follows from the construction of a suitable test function.  For any $M_i, M_j \in \cM$ with $i \ne j$, $\delta \in [0, 1/2)$ and $g\!: \{i,j\} \to \mathbb{R}$ set
  \begin{align*}
    f(x)
    \;\ldef\;
    g(i)\, h_{\cU_{M_i}(\delta), \cU_{M_j}(\delta)}(x)
    \,+\,
    g(j)\, h_{\cU_{M_j}(\delta), \cU_{M_i}(\delta)}(x),
  \end{align*}
  $\cU_{M_i}(\delta) \equiv \cU_{M_i}(\delta, M_j)$ and $\cU_{M_j}(\delta) \equiv \cU_{M_j}(\delta, M_i)$ are the $\delta$-neighborhoods of $M_i$ and $M_j$ as defined in \eqref{eq:def:neighborhood}.  Then, by Lemma~\ref{lem:harm:neighborhood},
  \begin{align*}
    \cE(f)
    \;=\;
    \big(g(i) - g(j)\big)^2\, 
    \capacity(\cU_{M_i}(\delta), \cU_{M_j}(\delta))
    \overset{\eqref{eq:harm:neighborhood:cap}}{\;\leq\;}
    \big(g(i) - g(j)\big)^2\, 
    \frac{\capacity(M_i, M_j)}{1-2\delta}.
  \end{align*}
  Further, notice that $a \ln a - a\ln b - a + b \geq 0$ for all $a,b > 0$. Thus,
  \begin{align*}
    \Ent_{\mu}[f^2]
    &\;=\;
    \min_{c > 0}\, \Mean_{\mu}\pra*{f^2 \ln f^2 - f^2 \ln c - f^2 + c}
    \\[1ex]
    &\;\geq\;
    \min_{c > 0}\,
    \Mean_{\mu}\!\big[
      f^2 \ln f^2 - f^2 \ln c - f^2 + c)\, 
      \one_{\cU_{M_i}(\delta) \cup \cU_{M_j}(\delta)}
    \big]
    \\
    &\overset{\eqref{eq:harm:neighborhood:mu}}{\;\geq\;}
    \big(\mu[\cS_i] + \mu[\cS_j]\big)\, \big(1 - \varrho \delta^{-1}\big)\;
    \Ent_{\mathrm{Ber}(p)}[g^2],
  \end{align*}
  where $\mathrm{Ber}(p) \in \cP(\{i,j\})$ denotes the Bernoulli measure on the two-point space $\{i, j\}$ with success probability $p = 1 - q = \mu[\cS_i] / (\mu[\cS_i] + \mu[\cS_j])$.  This yields
  \begin{align*}
    C_{\LSI}
    \;\geq\;
    \frac{\Ent_{\mu}[f^2]}{\cE(f)}
    \;=\;
    \frac{\mu[\cS_{M_i}] + \mu[\cS_{M_j}]}{\capacity(M_i, M_j)}\,
    (1-2\delta)\, \big(1 - \varrho \delta^{-1}\big)\;
    \frac{\Ent_{\mathrm{Ber}(p)}[g^2]}{(g(i) - g(j))^2},
  \end{align*}
  for any $g\!:\{i,j\} \to \mathbb{R}$ with $g(i) \ne g(j)$.  Recall that the logarithmic Sobolev constant for Bernoulli measures is explicitly known and given by
  \begin{align*}
    \sup\bigg\{
      \frac{\Ent_{\mathrm{Ber}(p)}[g^2]}{(g(i) - g(j))^2}
      \;:\;
      g(i)\ne g(j)
    \bigg\}
    \;=\;
    \frac{pq}{\Lambda(p, q)}
    \;=\;
    \frac{\mu[\cS_i]\, \mu[\cS_j]}{\Lambda(\mu[\cS_i], \mu[\cS_j])}\,
    \big(\mu[\cS_i] + \mu[\cS_j]\big).
  \end{align*}
  This was found in \cite{HY95} and independently in \cite{DSC96}.  Thus, by choosing $\delta = \sqrt{\varrho}$, \eqref{eq:thm:LSI:constants:lb} follows.
  
  Let us now address the upper bound.  First, since $\Lambda(\mu[\cS_i], \mu[\cS_j]) \leq 1$ we deduce from Lemma~\ref{lemma:local:LSI} by following similar arguments as in the proof of Theorem~\ref{thm:PI:metastable} that
  \begin{align*}
    &\sum_{i=1}^K \mu[\cS_i]\, \Ent_{\mu_i}\pra*{\Mean[f^2 \,|\, \cF]}
    \overset{\eqref{eq:local:LSI:metastable}}{\;\leq\;}
    \! c C_{\textup{mass}}\, C_{\PI, \cM}\, \varrho
    \!\sum_{\substack{i,j=1 \\ i \ne j}}^K
    \frac{\mu[\cS_i]\, \mu[\cS_j]}{\Lambda(\mu[\cS_i], \mu[\cS_j])}\;
    \frac{\cE(f)}{\capacity(M_i, M_j)}.
  \end{align*}
  On the other hand, by \cite[Corollary~2.8]{MS14}, we have that
  \begin{align*}
    &\Ent_{\mu}\!\big[\!\Mean_{\mu}[f^2 \,|\, \cG]\big]
    \;\leq\;
    \frac{1}{2}\, \sum_{\substack{i,j=1\\i \ne j}}^K
    \frac{\mu[\cS_i]\, \mu[\cS_j]}{\Lambda(\mu[\cS_i], \mu[\cS_j])}\,
    \bigg(
      {\textstyle \sum\limits_{k \in \{i,j\}}}\!
      \var_{\mu_k}[f] 
      \,+\, \Big(\!\Mean_{\mu_i}[f] - \Mean_{\mu_j}[f]\Big)^{\!2}
    \bigg).
  \end{align*}
  In view of the projection property of the conditional expectation together with \eqref{eq:ass:PI:metastable:sets} and \eqref{eq:local:PI:metastable}
  \begin{align*}
    \sum_{k \in \{i,j\}}\!\var_{\mu_k}[f] 
    &\;=\!
    \sum_{k \in \{i,j\}}
    \bra*{
      \mu_k[M_k]\, \var_{\mu_{M_k}}[f] 
      + 
      \var_{\mu_k}\pra*{\Mean_{\mu}[f \,|\, \cF]} 
    }
    \;\leq\;
    c C_{\PI, \cM}\,
    \frac{\varrho\, \cE(f)}{\capacity(M_i, M_j)}.
  \end{align*}
  Thus, \eqref{eq:thm:LSI:constants:ub} follows up to the additive constant $C_{\LSI,\cM}$ by combining the estimates above and using Lemma~\ref{lemma:mean:diff}.  To bound the additive error term $C_{\LSI,\cM}$, notice that
  \begin{align*}
    C_{\LSI,\cM}
    \overset{\eqref{eq:HitProbCap}}{\;\leq\;}
    C_{\LSI,\cM}\, \varrho\,
    \sum_{\substack{i,j=1 \\ i \ne j}}^K 
    \frac{\mu[\cS_i] \, \mu[\cS_j]}{\Lambda(\mu[\cS_i], \mu[cS_j])}
    \frac{1}{\capacity(M_i, M_j)},
  \end{align*}
  where we used that $\Lambda(\mu[\cS_i], \mu[\cS_j]) \leq 1$.  This allows us to absorb the additive constant $C_{\LSI,\cM}$ into the right-hand side of \eqref{eq:thm:LSI:constants:ub}.
\end{proof}
\begin{proof}[Proof of Theorem~\ref{thm:main:PI}]
  For $K=2$ \eqref{eq:main:PI} and \eqref{eq:main:LSI} follow directly from Theorem~\ref{thm:PI:metastable} and Theorem~\ref{thm:LSI:metastable}.
\end{proof}

\section{Random field Curie--Weiss model}
\label{sec:RFCW}
The proof of Theorem~\ref{CW:thm:EyringKramers} follows from Theorems~\ref{thm:PI:metastable} and~\ref{thm:LSI:metastable} after having established Propositions~\ref{CW:prop:rho:metastable}, \ref{CW:prop:meta_sets:PI_LSI}, and \ref{CW:prop:varLEBD} in each of the three following sections. 
\subsection{Verification of \texorpdfstring{$\varrho$-}{}metastability}
%
%
%
%
In view of \eqref{eq:equi:measure}, estimates of hitting probabilities can be deduced from upper and lower bounds of the corresponding capacities.  Based on the  Dirichlet principle and a comparison argument for Dirichlet forms, our strategy is to compare the microscopic with suitable mesoscopic capacities via a coarse-graining. One direction of the comparison follows immediately from the Dirichlet principle. In this way, we can utilize the estimates on capacities contained in \cite{BBI09}.

For disjoint subsets $\bfA, \bfB \subset \Gamma^n$ set $A = \order^{-1}(\bfA)$ and $B = \order^{-1}(\bfB)$.  Then, the \emph{microscopic capacity} $\capacity(A, B)$ is bounded from above by the \emph{mesoscopic capacity} $\bcapacity(\bfA, \bfB)$
\begin{align}\label{eq:def:meso:cap}
  \capacity(A, B)
  &\overset{\eqref{eq:DirichletPrinciple}}{\;\leq\;}
  \inf_{g \in \mathcal{H}_{\bfA, \bfB}} \cE(g \circ \order) 
  \nonumber\\[.5ex]
  &\;=\;
  \inf_{g \in \mathcal{H}_{\bfA, \bfB}}
  \frac{1}{2}\, 
  \sum_{\bfx,\bfy \in \Gamma^n} \magmu(\bfx)\, \bfr(\bfx, \bfy)\, 
  \big(g(\bfx) - g(\bfy)\big)^2
  \nonumber\\[.5ex]
  &\;\rdef\;
  \bcapacity(\bfA, \bfB),
\end{align}
where
\begin{align}\label{eq:meso:rates}
  \bfr(\bfx, \bfy)
  \;\ldef\;
  \frac{1}{\magmu(\bfx)} 
  \sum_{\sigma \in \order^{-1}(\bfx)} \mu(\sigma)\, 
  \sum_{\sigma' \in \order^{-1}(\bfy)} p(\sigma, \sigma')
\end{align}
and $\mathcal{H}_{\bfA, \bfB} \ldef \set{g\!: \Gamma^n \to [0,1] : g|_{\bfA} = 1, g|_{\bfB} = 0}$.  Notice that the \emph{mesoscopic transition probabilities} $(\bfr(\bfx, \bfy) : \bfx, \bfy \in \Gamma^n)$ are reversible with respect to $\magmu$.  Recall that the metastable sets $M_1, M_2$ are defined as preimages under $\order$ of particular minima $\bfm_1, \bfm_2$ of $F$.  Hence, an upper bound on the numerator in \eqref{intro:eq:cap:meta_sets} follows from an upper bound on $\bcapacity(\bfm_1, \bfm_2)$.

In the following lemma we show that the denominator in \eqref{intro:eq:cap:meta_sets} can also be expressed in terms of mesoscopic capacities.
\begin{lem}\label{lem:micro:meso:lb}
  For $n \geq 1$ let $\bfB \subset \Gamma^n$ be non-empty and set $B = \order^{-1}(\bfB)$.  Further, define $\varepsilon(n) \ldef 2h_{\infty}/n$.  Then, for any $A \subset \cS \setminus B$ and $N \geq n$,
  \begin{align}\label{eq:micro:meso:lb}
    \prob_{\mu_A}\pra*{\tau_B < \tau_A}
    \;\geq\;
    |\Gamma^n|^{-1}\, \me^{-4 \beta \varepsilon(n) (2N + 1)}\,
    \min_{\bfx \in \Gamma^n \setminus \bfB} 
    \frac{\bcapacity(\bfx, \bfB)}{\magmu(\bfx)}.
  \end{align}
\end{lem}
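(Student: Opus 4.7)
By~\eqref{eq:Capacity:HittingTimes} the claim is equivalent to the capacity lower bound
\[
  \capacity(A, B)\;\ge\;|\Gamma^n|^{-1}\,\me^{-4\beta\varepsilon(n)(2N+1)}\,\mu[A]\,\min_{\bfx \in \Gamma^n \setminus \bfB}\frac{\bcapacity(\bfx, \bfB)}{\magmu(\bfx)}.
\]
First, I would apply a pigeonhole reduction. Writing $A = \bigsqcup_{\bfx \in \order(A)} A_{\bfx}$ with $A_{\bfx} := A \cap \order^{-1}(\bfx)$ and using $|\order(A)| \le |\Gamma^n|$, there exists $\bfx^{*} \in \order(A) \subset \Gamma^n \setminus \bfB$ with $\mu[A_{\bfx^{*}}] \ge \mu[A]/|\Gamma^n|$. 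The monotonicity $\capacity(A, B) \ge \capacity(A_{\bfx^{*}}, B)$ is immediate from the Dirichlet principle~\eqref{eq:DirichletPrinciple} (the feasible set shrinks as $A$ grows), reducing the task to the single-cell estimate
\[
  \frac{\capacity(A_{\bfx^{*}}, B)}{\mu[A_{\bfx^{*}}]}\;\ge\;\me^{-4\beta\varepsilon(n)(2N+1)}\,\frac{\bcapacity(\bfx^{*}, \bfB)}{\magmu(\bfx^{*})}.
\]

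Next I would invoke the Thompson (dual) principle, which asserts $\capacity(A_{\bfx^{*}}, B) \ge E(J)^{-1}$ for any antisymmetric unit flow $J$ from $A_{\bfx^{*}}$ to $B$. Starting from a minimising antisymmetric mesoscopic unit flow $\bar J$ from $\bfx^{*}$ to $\bfB$, whose Thompson energy equals $\bcapacity(\bfx^{*}, \bfB)^{-1}$, I would define the fibre-lift
\[
  J(\sigma, \sigma')\;:=\;\bar J\bigl(\order(\sigma), \order(\sigma')\bigr)\,\frac{\mu(\sigma)\,p(\sigma, \sigma')}{\magmu(\order(\sigma))\,\bfr(\order(\sigma), \order(\sigma'))}
\]
on Glauber neighbours with distinct mesoscopic labels (and $J \equiv 0$ on intra-cell edges). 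Using microscopic reversibility together with $\magmu(\bfx)\bfr(\bfx,\bfy) = \magmu(\bfy)\bfr(\bfy,\bfx)$, $J$ is antisymmetric. I would then add a divergence-repairing circulation inside each cell so that the resulting flow has source $\mu_{A_{\bfx^{*}}}$ on $A_{\bfx^{*}}$ and a sink on $B$.

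Finally I would bound the Thompson energy of $J$ via the fluctuation-field estimates implied by $|\tilde h_i| \le \varepsilon(n)$: from $|\beta \sum_i \sigma_i \tilde h_i| \le \beta \varepsilon(n) N$ one has $\mu(\sigma)/\magmu(\bfx) \in \me^{\pm 2\beta\varepsilon(n)N}/|\order^{-1}(\bfx)|$ for $\sigma \in \order^{-1}(\bfx)$, and from $|H(\sigma') - H(\sigma) + N(E(\order(\sigma')) - E(\order(\sigma)))| \le 2\varepsilon(n)$ on single spin-flips the Glauber rates agree with the averaged mesoscopic rates up to $\me^{\pm 2\beta\varepsilon(n)}$. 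Collecting these contributions over the inter-cell part, where summability via~\eqref{eq:meso:rates} reduces the lifted energy to that of $\bar J$, and the repairing intra-cell part, yields the factor $\me^{4\beta\varepsilon(n)(2N+1)}$, at which point Thompson's principle concludes.

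\emph{Main obstacle.} The naive fibre-lift $J$ is not divergence-free within each cell: at $\sigma \in \order^{-1}(\bfx)$ the divergence reads $\sum_\bfy \bar J(\bfx, \bfy)\,\lambda(\sigma, \bfy)$ with $\sigma$-dependent weights $\lambda(\sigma,\bfy)=\mu(\sigma)\sum_{\sigma'\in\order^{-1}(\bfy)}p(\sigma,\sigma')/(\magmu(\bfx)\bfr(\bfx,\bfy))$ that do not cancel pointwise even though $\sum_\bfy \bar J(\bfx, \bfy) = 0$. The principal difficulty is constructing an explicit intra-cell repairing circulation and controlling its energy uniformly by the fluctuation-field estimate.
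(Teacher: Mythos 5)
Your proposal takes a genuinely different route (Thompson's principle applied to a fibre‑lifted flow) from the paper, which never constructs a flow at all. Instead the paper introduces an auxiliary chain with modified transition probabilities
$\overbar{p}(\sigma,\sigma') = N^{-1}\exp\bigl(-\beta N[E(\order(\sigma'))-E(\order(\sigma))]_+\bigr)\one_{|\sigma-\sigma'|_1=2}$,
which depends only on mesoscopic data and is therefore \emph{exactly lumpable}: the image process under $\order$ is Markov, so $\overbar\prob_\sigma[\tau_B<\tau_{X_k}]$ is constant over each fibre $X_k=\order^{-1}(\bfx_k)$ and one gets
$\overbar\capacity(A_k,B)\geq \overbar\mu[A_k]\,\overbar\bcapacity(\bfx_k,\bfB)/\overbar\magmu(\bfx_k)$ for free, with the final inequality following by two-sided comparison of $\mu$ with $\overbar\mu$ and of $p$ with $\overbar p$ at cost $\me^{\pm 2\beta\varepsilon(n)N}$ and $\me^{\pm 2\beta\varepsilon(n)}$ respectively. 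This is what your flow construction cannot reproduce, and the difficulty you flag as the ``main obstacle'' is in fact fatal in the form you state it: for single–spin–flip Glauber dynamics there are \emph{no} intra-cell edges. Every flip of a spin in $\Lambda_\ell$ changes $\order_\ell$ by $\pm 2/N$, so the conductance graph restricted to $\order^{-1}(\bfx)$ is edgeless. A ``divergence-repairing circulation inside each cell'' therefore does not exist; any repair would have to detour through neighbouring fibres, whose energy cost is not controlled by the fluctuation-field estimates alone. There is also a second, unacknowledged gap: the raw lift places its unit source spread over the entire fibre $\order^{-1}(\bfx^{*})$ proportionally to $\mu(\sigma)$, not concentrated on $A_{\bfx^{*}}$ as Thompson's principle for $\capacity(A_{\bfx^{*}},B)$ requires, and fixing this again needs intra-fibre routing. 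Consequently the argument does not close; the auxiliary lumpable chain is the missing idea that makes the fibre reduction painless.
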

\begin{proof}
  Notice that the image process $(\order(\sigma(t)) : t \geq 0)$ on $\Gamma^n$ is in general not Markovian.  For that reason, we introduce an additional Markov chain on $\cS$ with the property that its image under $\order$ is Markov and the corresponding Dirichlet form is comparable to the original one with a controllable error provided $n$ is chosen large enough.
 
  For fixed $n \geq 1$ let $(\overbar{\sigma}(t) : t \geq 0)$ be a Markov chain in discrete-time on $\cS$ with transition probabilities
  \begin{align*}
    \overbar{p}(\sigma,\sigma')
    \;\ldef\;
    \frac{1}{N}\, 
    \exp\bra[\big]{- \beta N \pra{ E(\order(\sigma')) - E(\order(\sigma))}_+}\,
    \one_{\abs{\sigma - \sigma'}_1=2}
  \end{align*}
  and $\overbar{p}(\sigma, \sigma) = 1 - \sum_{\sigma' \in \cS} \overbar{p}(\sigma, \sigma')$, which is reversible with respect to the random Gibbs measure
  \begin{align*}
    \overbar{\mu}(\sigma)
    \;\ldef\;
    \overbar{Z}^{-1}\, 
    \exp\bra[\big]{-\beta N E\bra[\big]{\order(\sigma)}}\, 2^{-N},
    \qquad \sigma \in \cS.
  \end{align*}
  Let us denote the law of this process by $\overbar{\prob}$, and we write $\overbar{\capacity}(A,B)$ for the corresponding capacities. Likewise, let $\overbar{\magmu} \ldef \overbar{\mu} \circ \order^{-1}$, and define $\overbar{\bfr}$ analog to \eqref{eq:meso:rates}.  Note that
  \begin{align}\label{eq:comparison:mu:p}
    \me^{-2\beta \varepsilon(n) N}
    \;\leq\;
    \frac{\overbar{\mu}(\sigma)}{\mu(\sigma)}
    \;\leq\;
    \me^{2\beta \varepsilon(n) N}
    \qquad \text{and} \qquad
    \me^{-2\beta \varepsilon(n)}
    \;\leq\;
    \frac{\overbar{p}(\sigma, \sigma')}{p(\sigma, \sigma')}
    \;\leq\;
    \me^{2\beta \varepsilon(n)}
  \end{align}
  for any $\sigma, \sigma' \in \cS$.  On the other hand, for any $\bfx, \bfy \in \Gamma^n$ it holds that $\overbar{p}(\sigma, \order^{-1}(\bfy)) = \overbar{p}(\sigma', \order^{-1}(\bfy))$ for every $\sigma, \sigma' \in \order^{-1}(\bfx)$. This ensures (see e.g.\ \cite{BR58}) that the Markov chain $(\overbar{\sigma}(t) : t \geq 0)$ is exactly lumpable, that is, $(\order(\overbar{\sigma}(t)) : t \geq 0)$ is a Markov process on $\Gamma^n$ with transition probabilities $\overbar\bfr$ and reversible measure $\overbar\magmu$.  As a corollary of \cite[Theorem 9.7]{BdH15} we obtain that, for $A = \order^{-1}(\bfa)$ and $B = \order^{-1}(\bfB)$ with $\set{\bfa}, \bfB \subset \Gamma^n$ disjoint,
  \begin{align}\label{eq:exact:lumpable}
    \overbar{\prob}_{\sigma}\pra*{\tau_B < \tau_A}
    \;=\;
    \overbar{\prob}_{\sigma'}\pra*{\tau_B < \tau_A}
    \qquad \forall\, \sigma, \sigma' \in A.
  \end{align}
  In particular, $\overbar{\capacity}(A, B) = \overbar{\bcapacity}(\bfa, \bfB)$. By using a comparison of Dirichlet forms, we deduce from \eqref{eq:comparison:mu:p} that, for any $A, B \subset \cS$,
  \begin{align}\label{eq:comparison:cap}
    \me^{-2 \beta \varepsilon(n) (N+1)}
    \;\leq\;
    \frac{\capacity(A, B)}{\overbar{\capacity}(A, B)}
    \qquad \text{and} \qquad
    \me^{-2 \beta \varepsilon(n) (N+1)}
    \;\leq\;
    \frac{\overbar{\bcapacity}(\bfa, \bfB)}{\bcapacity(\bfa, \bfB)}.
  \end{align}

  Let us now address the proof of \eqref{eq:micro:meso:lb}.  For a given $\emptyset \ne \bfB \subset \Gamma^n$ set $B = \order^{-1}(\bfB)$ and let $A \subset \cS \setminus B$ be arbitrary.  Then, we can find $\set{\bfx_k : k=1, \ldots, L} \subset \Gamma^n$ such that
  \begin{align*}
    A \cap \order^{-1}(\bfx_k) 
    \;\ne\; 
    \emptyset 
    \qquad\text{and}\qquad 
    A \;\subset\; \bigcup_{k=1}^L \order^{-1}(\bfx_k).
  \end{align*}
  We set $X_k \ldef \order^{-1}(\bfx_k)$ and $A_k \ldef A \cap X_k$ for $k \in \set{1, \ldots, L}$ to lighten notation.  Since
  \begin{align*}
    \overbar{\capacity}(A_k, B)
    \;\geq\;
    \sum_{\sigma \in A_k} \overbar{\mu}(\sigma)\, 
    \overbar{\prob}_{\sigma}\pra*{\tau_{B} < \tau_{X_k}}
    \overset{\eqref{eq:exact:lumpable}}{\;=\;}
    \overbar{\mu}[A_k]\;
    \frac{\overbar{\capacity}(X_k, B)}{\overbar{\mu}[X_k]}
    \;=\;
    \overbar{\mu}[A_k]\; 
    \frac{\overbar{\bcapacity}(\bfx_k, \bfB)}{\overbar{\magmu}(\bfx_k)}
  \end{align*}
  an application of \eqref{eq:comparison:cap} and \eqref{eq:comparison:mu:p} yields
  \begin{align}\label{eq:est:cap(Ak,B)}
    \capacity(A_k, B)
    \;\geq\;
    \me^{-2 \beta \varepsilon(n) (N+1)}\; \overbar{\mu}[A_k]\; 
    \frac{\overbar{\bcapacity}(\bfx_k, \bfB)}{\overbar{\magmu}(\bfx_k)}
    \;\geq\;
    \me^{-4 \beta \varepsilon(n) (2N+1)}\; \mu[A_k]\;
    \min_{\bfx \in \Gamma^n\setminus \bfB}\!
    \frac{\bcapacity(\bfx, \bfB)}{\magmu(\bfx)}.
  \end{align}
  Thus,
  \begin{align*}
    \capacity(A, B)
    \overset{\eqref{eq:cap:monotonicity}}{\;\geq\;}
    \frac{1}{L}\, \sum_{k=1}^L\, \capacity(A_k, B)
    &\overset{\eqref{eq:est:cap(Ak,B)}}{\;\geq\;}
    \frac{1}{L}\, \me^{-4 \beta \varepsilon(n) (2N+1)}\, \mu[A]\;
    \min_{\bfx \in \Gamma^n\setminus \bfB}\; 
    \frac{\bcapacity(\bfx, \bfB)}{\magmu(\bfx)}.
  \end{align*}
  Since $L \leq |\Gamma^n|$, the assertion \eqref{eq:micro:meso:lb} follows.
\end{proof}
\begin{proof}[Proof of Proposition~\ref{CW:prop:rho:metastable}]
  Let $n \geq 1$ and $M_k = \order^{-1}(\bfm_k)$ with $\bfm_k\in \Gamma^n$ for $k=1,\dots, K$ the local minima of $F$ as in the assumptions of Proposition~\ref{CW:prop:rho:metastable} with decreasing energy barriers $\set{\Delta_k : k \in \set{1, \ldots, K}}$ as defined in \eqref{CW:def:Delta}.  Then, by \cite[Proposition~4.5, Corollary~4.6 and Proposition~3.1]{BBI09} there exists $C < \infty$ such that, $\prob^h$-a.s., for any $N \geq N_0(h) \vee N_1(h)$ and all $1< k \leq K$ we have
  \begin{align*}
    \prob_{\mu_{M_k}}\pra*{\tau_{\bigcup_{i=1}^{k-1}M_i}  < \tau_{M_k}}
    \overset{\eqref{eq:def:meso:cap}}{\;\leq\;}
    \frac{\bcapacity(\bfM_{k-1}, \bfm_k)}{\magmu(\bfm_k)}
    \;\leq\;
    C\, N^n\, \me^{-\beta N \Delta_{k-1}}.
  \end{align*}
  On the hand, for any $A \subset \cS \setminus \bigcup_{i=1}^{K} M_i$ Lemma~\ref{lem:micro:meso:lb} implies that
  \begin{align*}
    \prob_{\mu_A}\pra*{\tau_{\bigcup_{i=1}^{K} M_i} < \tau_A}
    \;\geq\;
    |\Gamma^n|^{-1}\, \me^{-4 \beta \varepsilon(n) (2N + 1)}\,
    \min_{\bfx \in \Gamma^n \setminus \bfM} 
    \frac{\bcapacity(\bfx, \bfM)}{\magmu(\bfx)},
  \end{align*}
  where $\bfM := \bigcup_{i=1}^{K} \bfm_i$.  For any $\bfx \in \Gamma^n \setminus \bfM$ a lower bound on the mesoscopic capacity $\bcapacity(\bfx, \bfM)$ follows by standard comparison with the explicitly computable capacity $\bcapacity_{\boldsymbol{\gamma}}(\bfx, \bfM)$ of a one-dimensional path connecting $\bfx$ with $\bfM$.  For $\bfx \not\in \bfM$ there exists a cycle-free mesoscopic path $\boldsymbol{\gamma} = (\boldsymbol{\gamma}_0, \ldots, \boldsymbol{\gamma}_k)$ in $\Gamma^n$ such that $\boldsymbol{\gamma}_0 = \bfx$, $\boldsymbol{\gamma}_k \in \bfM$, $\bfr(\boldsymbol{\gamma}_i, \boldsymbol{\gamma}_{i+1}) > 0$ for all $i \in \set{0, \ldots, k-1}$ and $F(\boldsymbol{\gamma}_i)\leq F(\bfx) + O(1/N)$. This path can be obtained from the best-lattice approximation of the continuous gradient flow trajectory $\dot \bfx(t) = - \nabla F(\bfx(t))$ with $\bfx(0)= \bfx$. In particular, by \cite[Proposition~3.1]{BBI09}, there exists $C < \infty$ such that, $\prob^h$-a.s., for any $N \geq N_0(h) \vee N_1(h)$
  \begin{align}\label{eq:mag:path}
    \frac{\magmu(\bfx)}{\magmu(\boldsymbol{\gamma}_i)}
    \;\leq\;
    C N^n 
    \qquad \forall\, i \in \set{0, \ldots, k}.
  \end{align}
  Hence,
  \begin{align*}
    \frac{\bcapacity(\bfx, \bfM)}{\magmu(\bfx)}
    \;\geq\;
    \frac{\bcapacity_{\boldsymbol{\gamma}}(\bfx, \bfM)}{\magmu(\bfx)}
    \;=\;
    \bra*{
      \sum_{i=0}^{k-1} 
      \frac{\magmu(\bfx)}
      {
        \magmu(\boldsymbol{\gamma}_i)\, 
        \bfr(\boldsymbol{\gamma}_i,\boldsymbol{\gamma}_{i+1} )
      }
    }^{\!-1}
    \;\geq\;
    \frac{\me^{-\beta (2+h_{\infty})}}{k C N^{n+1}}\, , 
  \end{align*}
  where we used in the last step \eqref{eq:mag:path} and the fact that $\bfr(\bfz, \bfz') \geq N^{-1}\me^{-2\beta(2+h_{\infty})}$ for any $\bfz, \bfz' \in \Gamma^n$ with $\bfr(\bfz, \bfz') > 0$.  Since the path $\boldsymbol{\gamma}$ is assumed to be cycle-free, its length is bounded by $|\Gamma^n|$, which itself is bounded by $N^n$.  Thus, by combining the estimates above and using the fact that by Assumption~\ref{CW:ass:law} $\Delta_{K-1}>0$, we can absorb the subexponential prefactors. That is, $\prob^h$-a.s., for any $c_1 \in (0, \Delta_{K-1})$ there exists $n_0(c_1)$ such that for all $n \geq n_0(c_1)$ the following holds: there exists $\overbar N < \infty$ such that for every $N \geq N_0(h) \vee N_1(h) \vee \overbar N$,
  \begin{align*}
    K\; \frac{\max_{M \in \set{M_1,\dots,M_K}}
      \prob_{\mu_M}\!\big[
        \tau_{{\scriptscriptstyle \bigcup_{i=1}^K} M_i \setminus M} < \tau_M
      \big]}
    {
      \min_{A \,\subset \cS \setminus \bigcup_{i=1}^K M_i}
      \prob_{\mu_A}\!\big[\tau_{\bigcup_{i=1}^K M_i} < \tau_A\big]
    }
    \;\leq\;
    \me^{-\beta c_1 N}
    \;\rdef\;
    \varrho.
  \end{align*}
  This completes the proof.
\end{proof}
\begin{proof}[Proof of Proposition~\ref{CW:cor:rho:metastable}]
  The proof is very similar to the one presented above. However, one has to be more careful in the construction of the path for~\eqref{eq:mag:path}, which is replaced by the bound
  \begin{align*}
    \frac{\magmu(\bfx)}{\magmu(\boldsymbol{\gamma}_i)}
    \;\leq\;
    C N^n  \, \me^{\beta N \Delta_2},
    \qquad \forall\, i \in \set{0, \ldots, k}.
  \end{align*}
  The mesoscopic path $\boldsymbol{\gamma}$ is now constructed such that it passes through the communication height $\Phi(\bfx, \set{\bfm_1,\bfm_2}) = \max_{i \in \set{0, \ldots, k}} F(\boldsymbol{\gamma}_i)$, where $\Phi(\bfx, \bfM)$ is defined in \eqref{CW:def:CommHeight}.  The definition of $\Delta_2$ and the ordering of $\bfm_1$ and $\bfm_2$ ensures that $\Phi(\bfx, \set{\bfm_1,\bfm_2})\leq \Delta_2$ for all $\bfx\in \Gamma^n$. Finally, the non-degeneracy Assumption~\ref{CW:ass:dominance} implies that the subexponential factors can be absorbed. 
\end{proof}

\subsection{Regularity estimates via coupling arguments}
%
%
%
%
The main objective in this subsection is to show that Assumption~\ref{ass:regularity} is satisfied in the random field Curie--Weiss model.
\begin{prop}\label{CW:prop:varLEBD}
  Let the assumptions of Proposition~\ref{CW:cor:rho:metastable} be satisfied.  Then, $\prob^h$-a.s., for any $c_2 \in (0, c_1)$ there exists $n_1 \equiv n_1(c_1,c_2, \beta, h_{\infty})$ such that for any $n \geq n_0 \vee n_1$, for any $i \ne j \in \set{1,2}$ and $N \geq N_0(h) \vee N_1(h)$,
  \begin{align}\label{CW:eq:regularity}
    \var_{\mu_{M_i}}\pra*{\frac{\nu_{M_i,M_j}}{\mu_{M_i}}}
    \;\leq\;
    \frac{\eta\,\mu[M_i]}{\capacity(M_i, M_j)} \qquad\text{with}\qquad \eta = \me^{-c_2 \beta N}.
  \end{align}
  Moreover, if the external field $h$ takes only finite many discrete values then \eqref{CW:eq:regularity} holds with $\eta = 0$.
\end{prop}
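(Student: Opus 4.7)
The plan is to first rewrite~\eqref{CW:eq:regularity} in a more convenient form. By~\eqref{eq:def:LastExitBiasedDistri}, for $\sigma \in M_i$ one has $\nu_{M_i,M_j}(\sigma)/\mu_{M_i}(\sigma) = \mu[M_i]\,e_{M_i,M_j}(\sigma)/\capacity(M_i,M_j)$, and $\Mean_{\mu_{M_i}}[e_{M_i,M_j}] = \capacity(M_i,M_j)/\mu[M_i]$ by~\eqref{eq:capacity:def}. Hence~\eqref{CW:eq:regularity} is equivalent to
\[
\var_{\mu_{M_i}}[e_{M_i,M_j}] \;\leq\; \eta\,\Mean_{\mu_{M_i}}[e_{M_i,M_j}].
\]
Since $e_{M_i,M_j}\in[0,1]$ and $\Mean_{\mu_{M_i}}[e_{M_i,M_j}]\le 1$, the latter is implied by the pointwise multiplicative oscillation bound
\[
|e_{M_i,M_j}(\sigma)-e_{M_i,M_j}(\sigma')| \;\le\; \sqrt{\eta}\,\Mean_{\mu_{M_i}}[e_{M_i,M_j}]
\qquad \forall\, \sigma,\sigma'\in M_i .
\]
The task thus reduces to showing that $e_{M_i,M_j}$ is almost constant on $M_i$ with relative precision $\sqrt{\eta}=\me^{-c_2\beta N/2}$.

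For the discrete case, I would choose $n$ so large that each mesoscopic interval $I_\ell^h$ of width $2h_\infty/n$ contains at most one point of the finite support of~$\prob^h$. Then every block $\Lambda_\ell$ consists of sites carrying identical fields, so the Hamiltonian~\eqref{CW:Hamiltonian} depends only on the block magnetizations $\order_\ell(\sigma)$. Consequently the Gibbs measure~$\mu$, the Glauber transition probabilities~\eqref{CW:Glauber}, and the sets $M_i=\order^{-1}(\bfm_i)$ are all invariant under the group~$G$ of permutations of $\{1,\ldots,N\}$ that preserve every~$\Lambda_\ell$. Since two configurations in $M_i$ have the same number of $+1$ spins in each $\Lambda_\ell$, the group $G$ acts transitively on $M_i$, so the hitting probability $\sigma\mapsto e_{M_i,M_j}(\sigma)=\prob_\sigma[\tau_{M_j}<\tau_{M_i}]$ is $G$-invariant and hence constant on $M_i$; this yields $\var_{\mu_{M_i}}[e_{M_i,M_j}]=0$ and $\eta=0$.

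For a continuous field I would follow the coupling strategy of~\cite{BBI12}. Any two configurations in $M_i$ can be joined by at most $O(N)$ elementary transpositions of two opposite spins within a common block, so by the triangle inequality it suffices to control the oscillation over such an elementary pair $(\sigma,\sigma')$ differing only at $i_1,i_2\in\Lambda_\ell$. For such a pair I construct a Markovian coupling of two Glauber chains which moves synchronously with the same noise at sites where the chains agree and applies an optimal Bernoulli coupling at the two discrepancy sites. The quantitative input is that the Metropolis acceptance rates at $i_1$ and $i_2$ differ multiplicatively by at most $\me^{2\beta\varepsilon(n)}=\me^{4\beta h_\infty/n}$, because $|h_{i_1}-h_{i_2}|\le\varepsilon(n)$. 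Combined with the Radon--Nikodym distortion $\mu(\pi\sigma)/\mu(\sigma)\le\me^{2\beta h_\infty N/n}$ for any block-preserving permutation~$\pi$ and the fact, from Proposition~\ref{CW:prop:rho:metastable}, that the expected number of returns of the chain to $M_i$ before reaching $M_j$ is at least of order $\me^{c_1\beta N}$, iterating the coupling over excursions via the strong Markov property produces the required multiplicative estimate on $|e_{M_i,M_j}(\sigma)-e_{M_i,M_j}(\sigma')|$ relative to $\Mean_{\mu_{M_i}}[e_{M_i,M_j}]$.

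The main obstacle is the quantitative coupling analysis in the continuous case: the oscillation bound must beat the already exponentially small $\Mean_{\mu_{M_i}}[e_{M_i,M_j}]$ by an additional factor $\me^{-c_2\beta N/2}$. The multiplicative distortion $\me^{O(\beta h_\infty N/n)}$ accumulated along an excursion forces the threshold $n_1=n_1(c_1,c_2,\beta,h_\infty)$ to grow at least like $h_\infty/(c_1-c_2)$; once $n$ exceeds this threshold, the polynomial overhead from chaining $O(N)$ elementary transpositions is absorbed into the exponential decay and yields~\eqref{CW:eq:regularity} with $\eta=\me^{-c_2\beta N}$.
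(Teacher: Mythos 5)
Your reformulation of the statement as a variance bound on the equilibrium measure $e_{M_i,M_j}$, and your observation that it amounts to showing $e_{M_i,M_j}$ is nearly constant on $M_i$, are both correct and agree with the paper's starting point. Your treatment of the discrete case via block-preserving permutations acting transitively on $M_i$ is a clean argument for exact lumpability and matches the paper's claim that $\eta=0$ there.

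In the continuous case, however, there are two problems. First, your reduction to the two-sided oscillation bound $|e_{M_i,M_j}(\sigma)-e_{M_i,M_j}(\sigma')|\le\sqrt{\eta}\,\Mean_{\mu_{M_i}}[e_{M_i,M_j}]$ asks for much more than is needed. Since $e_{M_i,M_j}\in[0,1]$, one has $\var_{\mu_{M_i}}[e_{M_i,M_j}]\le(\max e_{M_i,M_j}-\Mean_{\mu_{M_i}}[e_{M_i,M_j}])\,\Mean_{\mu_{M_i}}[e_{M_i,M_j}]$, so the regularity condition is already implied by the one-sided \emph{additive} bound $\max_\sigma e_{M_i,M_j}(\sigma)\le\Mean_{\mu_{M_i}}[e_{M_i,M_j}]+\eta$. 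That is exactly what the paper establishes; the target you set yourself is smaller by an extra factor $\Mean_{\mu_{M_i}}[e_{M_i,M_j}]\approx\me^{-\beta N\Delta_1}$, and the coupling you sketch does not plausibly deliver such a doubly-exponentially small oscillation.

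Second, and more seriously, your coupling argument is incomplete where it matters most. The paper constructs a \emph{single} coupling of two chains started at \emph{arbitrary} $\sigma,\varsigma\in\order^{-1}(\bfx)$ which preserves the mesoscopic order parameter $\order(\sigma(t))=\order(\varsigma(t))$ until either merging or a Bernoulli mismatch occurs; the essential outputs are the multiplicative distortion $\me^{-4\beta\eps(n)sN}$ (from $sN$ Bernoulli coins) and the additive error $\me^{-I_\alpha^{\nBer}(s-1)N}$ from a negative-binomial tail bound on the number of spin-flip attempts needed to merge. Crucially, since $A,B$ are $\order$-preimages, the two chains' hitting times to $B$ coincide even if merging has not completed, which is what allows a direct comparison of $\prob_\sigma[\tau_B<\tau_A]$ with the average $\capacity(A,B)/\mu[A]$ — no chaining over transpositions and no excursion decomposition is needed. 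Your proposal to chain over $O(N)$ elementary transpositions and then ``iterate the coupling over excursions via the strong Markov property'' is not spelled out; it is unclear what event structure you are conditioning on, why the number of returns to $M_i$ enters, or how this would produce a bound beating $\Mean_{\mu_{M_i}}[e_{M_i,M_j}]$ by the required extra exponential factor. As you yourself note, this is precisely the ``main obstacle,'' and as it stands the argument has a genuine gap at that point.
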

Let us emphasize, that although the bound~\eqref{CW:eq:regularity} can in principle be deduced from~\cite[Proposition 6.12]{BBI09}, we include a proof of Proposition~\ref{CW:prop:varLEBD} that is based on a coupling construction. Coupling methods were first applied in the analysis of the classical Curie--Weiss model in \cite{LLP10}.  Later, this technique was adapted in~\cite[Section 3]{BBI12} to obtain pointwise estimates on the mean hitting time for a certain class of general spin models. This approach was simplified and generalized to Potts models in~\cite{Sl12}. Here, we give a streamlined presentation of~\cite{BBI12} thanks to the simplification of~\cite{Sl12} in the setting of the random field Curie--Weiss model.

We are going to construct a coupling $(\sigma(t), \varsigma(t) : t \in \N_0)$ such that $\sigma(t)$ and $\varsigma(t)$ are two versions of the Glauber dynamics of the random field Curie--Weiss model.  Hereby, we choose $\sigma(0) \in \order^{-1}(\bfx)$ and $\varsigma(0) \in \order^{-1}(\bfx)$, that is, the initial conditions have the same mesoscopic magnetization $\bfx \in \Gamma^n$.  We use that the Glauber dynamics of the Curie--Weiss model defined via~\eqref{CW:Glauber} can be implemented by first choosing a site $i \in \set{1, \dots, N}$ uniform at random and then flipping the spin at this site $i$ with probability given by the distribution $\nu_{i, \sigma}$ in the following way
\begin{align*}
  \nu_{i, \sigma}[-\sigma_i]
  \;\ldef\;
  N p(\sigma, \sigma^i)
  \qquad \text{and} \qquad
  \nu_{i, \sigma}[+1] + \nu_{i, \sigma}[-1] \;=\; 1,
\end{align*}
where $\sigma_j^i = \sigma_j$ for all $j \ne i$ and $\sigma_i^i = -\sigma_i$.  Note that for any $\sigma, \varsigma\in \order^{-1}(\bfx)$ and $i,j \in \set{1,\dots,N}$ such that $\order(\sigma^i) = \order(\varsigma^j)$, the estimate~\eqref{eq:comparison:mu:p} implies that 
\begin{align}\label{CW:e:comp:nu}
  \me^{-4\beta \varepsilon(n)} \nu_{i, \sigma}[-\sigma_i]
  \;\leq\;
  \nu_{j,\varsigma}[-\varsigma_j] .
\end{align}
The first objective is to couple the probability distributions $\nu_{i,\sigma}$ and $\nu_{j,\varsigma}$ for $\sigma, \varsigma\in \order^{-1}(\bfx)$ with $i,j$ chosen such that $\sigma_i = \varsigma_j$.  In view of~\eqref{CW:e:comp:nu}, the coupling can be constructed in such a way that we can decide in advance by tossing a coin whether both chains maintain the property of having the same mesoscopic value after the coupling step.

The actual construction of the coupling is a modification of the optimal coupling result on finite point spaces introduced in \cite[Proposition 4.7]{LPW06}. The constant $\me^{-4 \beta \eps(n)}$ from~\eqref{CW:e:comp:nu} will play the role of $\delta$, when we apply the following Lemma~\ref{lem:RFCW:OptCoupl}.
\begin{lem}[{Optimal coupling~\cite[Lemma 2.3]{Sl12}}]\label{lem:RFCW:OptCoupl}
  Let $\nu, \nu' \in \cP(\set{-1,1})$ and suppose that there exists $\delta\in (0,1)$ such that $\delta \nu(s) \leq \nu'(s)$ for $s \in \set{-1,1}$. Then, there exists an optimal coupling $(X,X')$ of $\nu$ and $\nu'$ with the additional property that for a Bernoulli-$\delta$-distributed random variable $V$ independent of $X$ it holds that
  \begin{align*}
    \prob\pra*{ X' = s' \mid V = 1, X = s } 
    \;=\; 
    \one_{s}(s')  
    \qquad\text{for } s, s' \in \set{-1,1}.
  \end{align*}
\end{lem}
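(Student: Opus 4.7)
The plan is to exhibit the coupling explicitly via a mixture decomposition that makes the Bernoulli variable $V$ appear as a natural ``success'' indicator. Concretely, the hypothesis $\delta\,\nu(s) \leq \nu'(s)$ for $s \in \set*{-1,1}$ ensures that
\begin{align*}
  \tilde\nu(s) \;\ldef\; \frac{\nu'(s) - \delta\,\nu(s)}{1-\delta},
  \qquad s \in \set*{-1,1},
\end{align*}
defines a probability measure on $\set*{-1,1}$, and it satisfies the convex representation $\nu' = \delta\,\nu + (1-\delta)\,\tilde\nu$. This decomposition is the only algebraic input we need from the assumption.

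I would then construct $(X,X',V)$ as follows: take $V \sim \mathrm{Ber}(\delta)$ independent of $X \sim \nu$, set $X' \ldef X$ on the event $\set*{V=1}$, and on $\set*{V=0}$ sample $X'$ according to an optimal coupling (in total variation) of $\nu$ and $\tilde\nu$ on the two-point set $\set*{-1,1}$; such an optimal coupling is elementary because the state space has only two points. Conditioning on $V$ and using the independence of $V$ and $X$, a direct computation gives
\begin{align*}
  \prob\pra*{X' = s}
  \;=\; \delta\,\nu(s) \,+\, (1-\delta)\,\tilde\nu(s)
  \;=\; \nu'(s),
\end{align*}
so $(X,X')$ is a coupling of $\nu$ and $\nu'$, and the conditional property $\prob\pra*{X' = s' \mid V = 1,\, X = s} = \one_{s}(s')$ holds by construction.

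What remains is to verify that the enforced Bernoulli structure does not destroy optimality; this is the only mild subtlety. On $\set*{V=0}$ the choice of optimal coupling yields $\prob\pra*{X \ne X' \mid V = 0} = d_{TV}(\nu, \tilde\nu)$, while $X = X'$ on $\set*{V=1}$, so $\prob\pra*{X \ne X'} = (1-\delta)\,d_{TV}(\nu, \tilde\nu)$. On the other hand, the mixture representation gives the pointwise identity $\nu(s) - \nu'(s) = (1-\delta)(\nu(s) - \tilde\nu(s))$ and hence $d_{TV}(\nu, \nu') = (1-\delta)\,d_{TV}(\nu, \tilde\nu)$. Combining these identities shows that $\prob\pra*{X \ne X'}$ attains the universal lower bound $d_{TV}(\nu, \nu')$, so the coupling is optimal and the lemma follows.
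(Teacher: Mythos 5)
Your proposal is correct and is essentially the standard proof of this kind of ``coin-flip'' optimal coupling lemma. The paper itself does not give a proof here but merely cites \cite[Lemma~2.3]{Sl12}, so there is no in-text argument to compare against. Your mixture decomposition $\nu' = \delta\,\nu + (1-\delta)\,\tilde\nu$ (valid precisely because $\delta\,\nu(s)\leq\nu'(s)$ guarantees $\tilde\nu\geq 0$ and it integrates to one) is exactly the mechanism behind the cited result: the marginal of $X'$ comes out right by the tower property, the conditional identity on $\set{V=1}$ holds by fiat, and optimality follows from the pointwise identity $\nu(s)-\nu'(s)=(1-\delta)\bigl(\nu(s)-\tilde\nu(s)\bigr)$, which forces $\prob[X\neq X']=(1-\delta)\,d_{TV}(\nu,\tilde\nu)=d_{TV}(\nu,\nu')$. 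The only point worth stating slightly more explicitly is the construction on $\set{V=0}$: one should sample $X'$ from the conditional kernel $\pi(\cdot\mid X)$ of an optimal coupling $\pi$ of $\nu$ and $\tilde\nu$, which is consistent because $X$ is still $\nu$-distributed given $V=0$ (thanks to the independence of $V$ and $X$). With that clarification, the argument is complete.
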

Therewith, we are able to describe the coupling construction. Let $T>0$ and $M>0$ and choose a family $(V_i : i \in \set{1, \ldots, M})$ of i.i.d.~Bernoulli variables with
\begin{align*}
  \prob\pra{ V_i = 1} 
  \;=\; 
  1 - \prob\pra{ V_i = 0} 
  \;=\; 
  \me^{-4\beta \eps(n)} .
\end{align*}
The coupling is initialized with $\sigma(0) = \sigma$, $\varsigma(0) = \varsigma$, $M_0 = 0$ and $\xi = 0$.
\begin{algorithmic}
  \For{$t = 0, 1, \dots, T-1$}
    \If{$\xi = 0$ and $M_t < M$}
      \State{%
        Choose $i$ uniform at random in $\set{1, \dots, N}$ and
        set $I_t = i$.
      }
      \If{$\sigma_{i}(t) = \varsigma_{i}(t)$}
        \State{%
          Choose $s \in \set{-1, 1}$ at random according to $\nu_{i, \sigma}$ and
          set
          \begin{align*}
            \mspace{64mu}
            \sigma_j(t+1)
            \;=\;
            \begin{cases}
              \sigma_j(t), &j \ne i\\
              s, &j=i
            \end{cases}
            \quad \text{and} \quad
            \varsigma(t+1)
            \;=\;
            \begin{cases}
              \varsigma_j(t), &j\ne i\\
              s, &j=i
            \end{cases}.
          \end{align*}
        }
        \State{Set $M_{t+1} = M_t$.}
      \Else
        \State{Let $\ell$ be such that $i \in \Lambda_{\ell}$.}
        \State{%
          Choose $j$ uniform at random in $\set{j \in \Lambda_{\ell} : \varsigma_j \ne \sigma_j \text{ and } \varsigma_j = \sigma_i}.$
        }
        \State{%
          Apply Lemma~\ref{lem:RFCW:OptCoupl} to the distributions $\nu_{i, \sigma}$ and $\nu_{j, \varsigma}$, where $V_{M_t}$ decides
        }
        \State{if both chains maintain the same mesoscopic value.}
        \State{Set $M_{t+1} = M_t + 1$.}
        \If{$V_{M_t}=0$}
          \State{Set $\xi=1$.}
        \EndIf
      \EndIf
    \Else
      \State{
        Use the independent coupling to update $\sigma(t)$ and $\varsigma(t)$.
      }
    \EndIf
  \EndFor
\end{algorithmic}
\begin{lem}[Coupling property]
  The joint probability measure $\prob_{\sigma, \varsigma}$ of the processes $\bra*{\bra{\sigma(t)}, \bra{\varsigma(t)}, \bra{V_t} : t \in \set{1, \ldots, T}}$ obtained from the construction above is a coupling of two versions of the random field Curie--Weiss model started in $\sigma$ and $\varsigma$, respectively.
\end{lem}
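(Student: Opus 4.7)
My plan is to verify that each marginal of the joint process $(\sigma(t),\varsigma(t))_t$ is a copy of the discrete-time random field Curie-Weiss Glauber chain with transition kernel~\eqref{CW:Glauber}. Since the update rule in the algorithm depends only on the current state $(\sigma(t),\varsigma(t),\xi,M_t)$ together with fresh external randomness, the joint process is Markovian; hence it suffices to check that the one-step conditional law of $\sigma(t+1)$ given $\sigma(t)$ alone (respectively $\varsigma(t+1)$ given $\varsigma(t)$ alone) agrees with $p(\cdot,\cdot)$. The initial conditions $\sigma(0)=\sigma$ and $\varsigma(0)=\varsigma$ are correct by construction.

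The first marginal is straightforward: in every branch, the site $i$ is drawn uniformly in $\set{1,\ldots,N}$ and the new value of $\sigma_i$ is sampled from $\nu_{i,\sigma(t)}$ — directly in the first and independent branches, and via the first marginal of the optimal coupling (Lemma~\ref{lem:RFCW:OptCoupl}) in the else branch. This yields exactly the Glauber transition for $\sigma$.

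The second marginal is the delicate part and rests on an invariant that I would prove by induction: as long as $\xi = 0$ and $M_t < M$, the mesoscopic magnetizations agree, $\order(\sigma(t)) = \order(\varsigma(t))$. Initially this holds by hypothesis. In the first branch both chains update the same site $i$ to the same value $s$, so every $\order_{\ell}$ changes by an identical amount. In the else branch the preselection rule enforces $\sigma_i = \varsigma_j$ and $\sigma_j = \varsigma_i$ with $i,j \in \Lambda_\ell$; conditionally on $V_{M_t}=1$ the optimal coupling produces the same new spin value at site $i$ of $\sigma$ and site $j$ of $\varsigma$, so the increments of $\order_\ell$ are identical, while if $V_{M_t}=0$ the flag $\xi$ is set to $1$ and the invariant is no longer required.

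Given the invariant, I would compute, for fixed $(s,c)$ with $\order(s)=\order(c)$, the probability that $\varsigma$ flips the spin at a site $k$ in one step. For $k$ with $s_k = c_k$ only the first branch contributes: site $k$ is chosen with probability $1/N$ and the new value is drawn from $\nu_{k,s}$, which equals $\nu_{k,c}$ because $s_k = c_k$ and $\order(s)=\order(c)$. For $k$ with $s_k \ne c_k$ only the else branch contributes; writing
\[
  \Lambda_\ell^{+-}
  \;\ldef\; \set*{j \in \Lambda_\ell : c_j = +1,\, s_j = -1}
  \quad\text{and}\quad
  \Lambda_\ell^{-+}
  \;\ldef\; \set*{j \in \Lambda_\ell : c_j = -1,\, s_j = +1},
\]
the mesoscopic equality $\order_\ell(s)=\order_\ell(c)$ forces $\abs{\Lambda_\ell^{+-}} = \abs{\Lambda_\ell^{-+}}$, and a direct count yields
\[
  \prob_{\sigma,\varsigma}\pra*{j = k \mid \sigma(t)=s,\, \varsigma(t)=c}
  \;=\; \sum_{i \in \Lambda_\ell^{\mp\pm}} \frac{1}{N}\, \frac{1}{\abs{\Lambda_\ell^{\pm\mp}}}
  \;=\; \frac{1}{N}
\]
for every such $k$, after which the second marginal of the optimal coupling assigns the new spin value the distribution $\nu_{k,c}$. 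Combining both cases, and noting that in the independent regime $\xi=1$ or $M_t \geq M$ the Glauber marginals are immediate, the effective one-step transition of $\varsigma$ is $p(c,\cdot)$. The main obstacle will be the invariant itself together with the cancellation $\abs{\Lambda_\ell^{+-}} = \abs{\Lambda_\ell^{-+}}$, which is precisely what makes the asymmetric site-selection rule for $j$ look uniform from the viewpoint of $\varsigma$ alone.
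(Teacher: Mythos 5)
Your proposal is correct and substantially more thorough than the paper's own proof, which is essentially a one-liner. The paper simply observes that site $i$ is uniform, that $\nu_{i,\sigma}=\nu_{i,\varsigma}$ when $\sigma_i=\varsigma_i$, and that ``Lemma~\ref{lem:RFCW:OptCoupl} ensures the coupling property'' in the remaining case; it never explicitly verifies that the induced site $j$ selected for the $\varsigma$-chain is uniformly distributed, nor does it isolate the invariant $\order(\sigma(t))=\order(\varsigma(t))$ that makes everything work. You identify both of these as the crux. Your inductive argument for the invariant is correct (in the else branch, $\sigma_i=\varsigma_j$ and the optimal coupling forces the same new spin at $i$ and $j$ on $\set{V_{M_t}=1}$, so the $\order_\ell$-increments match), and your bookkeeping computation
\[
  \prob_{\sigma,\varsigma}\pra[\big]{j=k}
  \;=\;
  \sum_{i \in \Lambda_\ell^{\mp\pm}} \frac{1}{N}\,\frac{1}{\abs{\Lambda_\ell^{\pm\mp}}}
  \;=\;
  \frac{\abs{\Lambda_\ell^{\mp\pm}}}{N\,\abs{\Lambda_\ell^{\pm\mp}}}
  \;=\;
  \frac{1}{N},
\]
which hinges on $\abs{\Lambda_\ell^{+-}}=\abs{\Lambda_\ell^{-+}}$ (a direct consequence of $\order_\ell(\sigma)=\order_\ell(\varsigma)$), is exactly the missing cancellation that shows $j$ is uniform. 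You also correctly note that $\nu_{k,\sigma}=\nu_{k,\varsigma}$ requires not just $\sigma_k=\varsigma_k$ but also equality of the total magnetizations, which the invariant supplies. In short, you and the paper follow the same construction, but you make rigorous the step the paper elides, namely that the asymmetric $i\mapsto j$ rule, conditioned on mesoscopic agreement, produces a uniform site distribution for the second chain; this buys a genuine verification of both marginals rather than a verification of the first marginal plus an appeal to the optimal-coupling lemma.
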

\begin{proof}
  As soon as $\xi=1$ or $M_t \geq M$ for some $t < T$, both chains evolve independently.  Hence, the assertion is immediate.  For $\xi = 0$ and $M_t < M$, by construction, $i$ is chosen uniform at random among $\set{1, \ldots, N}$.  Then, in the case $\sigma_i = \varsigma_i$ it follows that $\nu_{i,\sigma} = \nu_{i,\varsigma}$, whereas in the other case Lemma~\ref{lem:RFCW:OptCoupl} ensures the coupling property.
\end{proof}
The coupling construction ensures that, once $\varsigma(t)$ and $\sigma(t)$ have merged, they evolve together until time $T$. Hence, we call the event $\set{\sigma(t) = \varsigma(t)}$ a successful coupling.  Since conditioning on this event may distort the statistical properties of the paths $\varsigma$, we will introduce two independent subevents which are sufficient to ensure a merging of the processes until time $T$.
\begin{lem}\label{CW:lem:AB_event}
  For any value $T$ and $M$, define the following two events:
  \begin{enumerate}[label=(\roman{enumi})]
  \item The event that all Bernoulli variables $V_i$ are equal to $1$, that is,
    \begin{align*}
      \cA \,\ldef\; \set*{ V_i = 1 \,:\, i \in \set{0, \ldots, M-1}} .
    \end{align*}
  \item The stopping time $\mathfrak{t}_i$ is the first time the $i$-th spin flips and $\mathfrak{t}$ is first time all coordinates of $\sigma$ have been flipped, that is,
    \begin{align*}
      \mathfrak{t}_i 
      \;=\; 
      \inf\set*{t \geq 0 \,:\, \sigma(t+1) = - \sigma(0)} 
      \qquad\text{and}\qquad 
      \mathfrak{t} \,\ldef\; \max_{i\in \set{1, \ldots, N}}  \mathfrak{t}_i.
    \end{align*}
    Therewith, the random variable
    \begin{align*}
      \mathcal{N}
      \;\ldef\;
      \sum_{i=1}^{N}\, \sum_{t=0}^{\mathfrak{t}_i} \one_{I_t = i}
    \end{align*}
    represents the total number of flipping attempts until time $\mathfrak{t}$. The event $\cB$, only depending on $\set{\sigma(t) : t \in \set{0, \ldots, T}}$, is defined for any $B \subset \cS$ by
    \begin{align*}
      \cB
      \;\ldef\;
      \set{\mathfrak{t} \leq \tau_B} \,\cap\, \set{\cN \leq M}.
    \end{align*}
  \end{enumerate}
  Then, it holds that
  \begin{align*}
    \cA \cap \cB 
    \;\subset\; 
    \set{ \sigma(\mathfrak{t}) = \varsigma(\mathfrak{t})} .
   \end{align*}
\end{lem}
\begin{proof}
  The event $\cB$ ensures that $\sigma(t)$ has not reached the set $B$ and all its spins have flipped once. By the event $\cA$, each flipping aligns one more spin with $\varsigma(t)$, and hence we have $\sigma(\mathfrak{t}) = \varsigma(\mathfrak{t})$.
\end{proof}
By construction, we have
\begin{align}\label{CW:coupling:probA}
  \prob_{\varsigma, \sigma}\pra{\cA} \;\leq\; \me^{-4\beta \eps(n) M}.
\end{align}
which is exponentially small in $M$.  Moreover, since 
\begin{align}\label{CW:nu:lb}
  \nu_{i,\sigma}[-\sigma_i] \;\geq\; \exp\bra[\big]{- 2 \beta \bra{1+h_\infty}}
\end{align}
for any $\sigma \in \cS$ and $i \in \set{1, \ldots, N}$, by standard large deviation estimates, we can bound the tail of the probability of the random variable $\cN$.
\begin{lem}\label{CW:lem:Nbound}
  Let $s > \alpha^{-1} \ldef \exp\bra{2\beta \bra{1+h_\infty}}$ and set $M = c_3 N$.  Then,
  \begin{align*}
    \prob_{\sigma}\pra{\cN > M} \;\leq\; \me^{-I_\alpha^{\nBer}(s-1) N} ,
  \end{align*}
  where $I_\alpha^{\nBer}$ is the rate function of the negative Bernoulli distribution with parameters~$N$ and $\alpha$, that is given by
  \begin{align}\label{CW:def:c2}
    (0, \infty) \ni s
    \;\longmapsto\;
    I_\alpha^{\nBer}(s)
    \;\ldef\;
    s \ln \frac{s}{(1+s)(1-\alpha)} - \ln \alpha - \ln (1+s)
    \;\geq\; 
    0.
  \end{align}
  In particular, $I_{\alpha}^{\nBer}$ is strictly convex on $(0, \infty)$ and $I_\alpha^{\nBer}(s-1) > 0$ for all $s > \alpha^{-1}$.
\end{lem}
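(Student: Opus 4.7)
The plan is to stochastically dominate $\cN$ by a sum of $N$ i.i.d.\ geometric random variables and then invoke a standard Cram\'er/Chernoff bound. The only input from the dynamics I will use is the uniform lower bound $\nu_{i,\sigma}[-\sigma_i] \ge \alpha$, valid for every site $i$ and every configuration $\sigma$ by~\eqref{CW:eq:ass:h}; this is what makes an ``alpha-thinning'' coupling possible.

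First, I will realize the Glauber chain via i.i.d.\ random variables $(I_t, U_t)_{t \ge 0}$ with $I_t$ uniform on $\set{1,\ldots,N}$ and $U_t$ uniform on $[0,1]$, declaring that spin $I_t$ flips at step $t$ if and only if $U_t < \nu_{I_t, \sigma(t)}[-\sigma_{I_t}(t)]$. For each site $i$, I introduce the alpha-thinned stopping time $\widetilde{\mathfrak t}_i \ldef \inf\set{t \ge 0 : I_t = i \text{ and } U_t < \alpha}$ together with the count $\widetilde K_i \ldef \sum_{t=0}^{\widetilde{\mathfrak t}_i} \one_{I_t = i}$. Since $\set{U_t < \alpha} \subset \set{U_t < \nu_{I_t, \sigma(t)}[-\sigma_{I_t}(t)]}$, every alpha-success triggers an actual flip, forcing $\mathfrak t_i \le \widetilde{\mathfrak t}_i$ and $K_i \ldef \sum_{t=0}^{\mathfrak t_i} \one_{I_t = i} \le \widetilde K_i$. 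Summing over $i$ yields $\cN \le \widetilde{\cN} \ldef \sum_{i=1}^N \widetilde K_i$.

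Next, I will verify that $(\widetilde K_i)_{i=1}^N$ are i.i.d.\ with geometric law of parameter $\alpha$. For a fixed $i$, the subsequence of $U$-values sampled at times with $I_t = i$ is i.i.d.\ $\mathrm{Unif}[0,1]$, and $\widetilde K_i$ is the index of the first such value below $\alpha$, giving $\widetilde K_i \sim \mathrm{Geo}(\alpha)$. Joint independence across sites follows because distinct sites address disjoint sub-sequences of the $(U_t)$; this is made rigorous by conditioning on $(I_t)_{t \ge 0}$, noting that the conditional laws of the $\widetilde K_i$ are all $\mathrm{Geo}(\alpha)$ and depend on disjoint families of $U$'s, and then integrating out.

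Finally, I apply the Cram\'er upper bound to the i.i.d.\ sum $\widetilde{\cN} - N = \sum_{i=1}^N (\widetilde K_i - 1)$, whose summands have the negative Bernoulli law of parameter $\alpha$, mean $(1-\alpha)/\alpha$, and log-moment generating function $\Lambda(\theta) = \log \alpha - \log\bra[\big]{1 - (1-\alpha)\me^\theta}$ on its effective domain. A direct Legendre-transform computation identifies $\Lambda^*$ evaluated at $s-1$ with the function $I_\alpha^{\nBer}(s-1)$ displayed in~\eqref{CW:def:c2}, so that $\prob_\sigma\pra*{\cN > sN} \le \prob_\sigma\pra*{\widetilde{\cN} > sN} \le \exp\bra*{-N\, I_\alpha^{\nBer}(s-1)}$. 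Choosing $s = c_3 > \alpha^{-1}$, i.e.\ $M = c_3 N$, gives the claimed tail bound, while strict convexity of $I_\alpha^{\nBer}$ and the characterization of its unique zero at $s = \alpha^{-1}$ are classical consequences of $\Lambda$ being smooth and strictly convex on the interior of its effective domain, so $I_\alpha^{\nBer}(s-1) > 0$ for every $s > \alpha^{-1}$. The only conceptual subtlety is the independence claim in the third paragraph; once that is in place, the remainder is routine Cram\'er-type bookkeeping.
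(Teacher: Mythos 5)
Your proposal is correct and follows essentially the same route as the paper's proof: both use the uniform lower bound $\nu_{i,\sigma}[-\sigma_i] \ge \alpha$ to stochastically dominate $\cN$ by a negative binomial random variable (equivalently, a sum of $N$ i.i.d.\ $\mathrm{Geo}(\alpha)$ variables) and then apply a Cram\'er/Chernoff estimate with rate function $I_\alpha^{\nBer}$. The only difference is one of exposition: the paper cites an external coupling lemma from~\cite{Sl12} for the domination $\cN \le \cR + N$, whereas you build the $\alpha$-thinning coupling explicitly via the $(I_t, U_t)$ representation and verify independence of the per-site geometric counts by conditioning on the site-selection sequence; both arguments arrive at the same comparison and the same large-deviation bound.
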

\begin{proof}
  The bound \eqref{CW:nu:lb} implies that if a site is chosen uniformly at random among $\set{1, \ldots, N}$, it is flipped at least with probability $\alpha$.  Let $\bra{\omega(t) : t\in \set{0, \ldots, T}}$ be a family of independent $\mathop{\mathrm{Ber}}(\alpha)$-distributed random variables and define the negative binomial distributed random variable $\cR$ with parameters $N$ and $\alpha$ by
  \begin{align*}
    \cR
    \;\ldef\;
    \inf\set[\bigg]{ 
      s \geq 1 \,:\,  {\textstyle \sum\limits_{t=1}^s} \omega(t) = N
    } - N .
  \end{align*}
  Then, by using a straightforward coupling argument (see~\cite[Lemma 2.6]{Sl12}), we obtain that $\cN \leq \cR + N$.  Further, by using standard large deviation estimates, we find that
  \begin{align}\label{CW:est:probN}
    \prob_{\sigma}\pra*{ \cN > s N}
    \;\leq\;
    \prob\pra*{ \cR > (s -1)N}
    \;\leq\; 
    \me^{-N I_{\alpha}^{\nBer}(s-1)}.
  \end{align}
  Hereby, $I_{\alpha}^{\nBer}$ is given as the Legendre-Fenchel dual of the log-moment generating function of the negative Bernoulli distribution, that is $t \mapsto \log\bra*{\alpha / (1-(1-\alpha)e^t)}$.  The rate function $I_\alpha^{\nBer}$ is strictly convex, since $\partial_s^2 I_{\alpha}^{\nBer}(s-1) = \frac{1}{s(s+1)}$ and has its unique minimum in $s=\alpha^{-1} - 1$.  Hence, $I_{\alpha}^{\nBer}(s-1)$ is strictly positive for $s > \alpha^{-1}$.
\end{proof}
The above construction allows us to deduce the following bound on hitting probabilities of preimages of mesoscopic sets.
\begin{lem}\label{CW:lem:HitProb}
  For any $n \in \N$ and $\bfA, \bfB \subset \Gamma^n$ disjoint, set $A = \order^{-1}(\bfA)$ and $B = \order^{-1}(\bfB)$.  Further, let $\bfx \in \Gamma^n$ and choose $s > \alpha^{-1}$ according to Lemma~\ref{CW:lem:Nbound}.  Then
  \begin{align}\label{CW:est:HitProb}
    \prob_{\varsigma}\pra*{ \tau_B < \tau_A} 
    \;\geq\; 
    \me^{-4 \, \beta \, \eps(n) \, s \, N}\,
    \bra*{ \prob_{\sigma}\pra*{ \tau_B < \tau_A} \,-\, \me^{-I_{\alpha}^{\nBer}(s-1) N}},
    \qquad \forall\, \sigma, \varsigma \in \order^{-1}(\bfx)
  \end{align}
  where $I_\alpha^{\nBer}$ is given by~\eqref{CW:def:c2}.
\end{lem}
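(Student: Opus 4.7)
The plan is to use the coupling $(\sigma(t),\varsigma(t))_{t\geq 0}$ constructed above together with Lemmas~\ref{CW:lem:AB_event} and~\ref{CW:lem:Nbound} to transfer hitting-time information from $\sigma$ to $\varsigma$. Set $M = sN$ and introduce the good event $\cE \ldef \cA \cap \set{\cN \leq M}$.

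The core step is to verify that on $\cE$ the mesoscopic magnetizations of both chains coincide at \emph{every} time~$t$, i.e.\ $\order(\sigma(t)) = \order(\varsigma(t))$ for all $t\geq 0$. I would check this inductively along the trajectory of the coupling. In the branch $\sigma_i(t) = \varsigma_i(t)$ both chains apply the same flip at site~$i$, yielding identical changes in~$\order$. In the branch $\sigma_i(t)\ne \varsigma_i(t)$ the site~$j$ selected for the $\varsigma$-update lies in the \emph{same} block $\Lambda_\ell$ as~$i$ with $\varsigma_j(t) = \sigma_i(t)$, and Lemma~\ref{lem:RFCW:OptCoupl} with $V_{M_t}=1$ forces the updated values $\sigma_i(t+1)$ and $\varsigma_j(t+1)$ to agree, so $\order_\ell$ of both chains shifts by the common increment $(\sigma_i(t+1)-\sigma_i(t))/N$ while all other blocks remain unchanged. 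The condition $\cN\leq M$ keeps us inside the coupling regime, preventing the exit $M_t\geq M$: on $\cA$ each successfully-coupled pair of sites stays aligned forever, so $M_t$ is dominated by the number of selections of misaligned sites before their first flip, which is at most~$\cN$.

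Because $A = \order^{-1}(\bfA)$ and $B = \order^{-1}(\bfB)$ are mesoscopically defined, the preceding invariant gives $\tau_A^{\sigma} = \tau_A^{\varsigma}$ and $\tau_B^{\sigma} = \tau_B^{\varsigma}$ on~$\cE$. Moreover the Bernoulli family $(V_i)$ is by construction independent of the marginal evolution of~$\sigma$ (the $V_i$ enter only through the $\varsigma$-update), hence $\cA$ is independent of the $\sigma$-measurable events $\set{\tau_B^{\sigma}<\tau_A^{\sigma}}$ and $\set{\cN\leq M}$. Consequently
\begin{align*}
  \prob_{\varsigma}\pra[\big]{\tau_B < \tau_A}
  \;\geq\;
  \prob\!\pra[\big]{\cE \cap \set{\tau_B^{\sigma} < \tau_A^{\sigma}}}
  \;=\;
  \prob[\cA]\;
  \prob\!\pra[\big]{\set{\cN \leq M} \cap \set{\tau_B^{\sigma} < \tau_A^{\sigma}}}.
\end{align*}

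The proof concludes by combining $\prob[\cA] = \me^{-4\beta\eps(n)sN}$ (from $M = sN$ i.i.d.\ Bernoulli factors of parameter $\me^{-4\beta\eps(n)}$, cf.~\eqref{CW:coupling:probA}) with the elementary bound
\begin{align*}
  \prob\!\pra[\big]{\set{\cN \leq M} \cap \set{\tau_B^{\sigma} < \tau_A^{\sigma}}}
  \;\geq\;
  \prob_\sigma\pra[\big]{\tau_B < \tau_A} \,-\, \prob[\cN > M]
\end{align*}
and Lemma~\ref{CW:lem:Nbound}, which supplies $\prob[\cN > M] \leq \me^{-I_\alpha^{\nBer}(s-1)N}$. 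The main obstacle I anticipate is the trajectory-by-trajectory bookkeeping in the first step: one must argue that mesoscopic alignment $\order(\sigma(t)) = \order(\varsigma(t))$ persists \emph{throughout} the coupling (including unsuccessful flip attempts, and in the post-$\mathfrak{t}$ regime), not merely at the stopping time~$\mathfrak{t}$ of Lemma~\ref{CW:lem:AB_event}; this upgrade is precisely what turns the coupling-time statement into genuine equalities of mesoscopic hitting times.
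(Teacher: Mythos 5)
Your proposal is correct and follows essentially the same route as the paper: couple the two chains, use $\cA$ together with $\{\cN \leq M\}$ and Lemma~\ref{CW:lem:Nbound}, exploit independence of the $(V_i)$ from the $\sigma$-marginal, and invoke the fact that $A$ and $B$ are mesoscopic preimages so that alignment of $\order(\sigma(t))$ and $\order(\varsigma(t))$ forces the hitting times to coincide. The only organizational difference is that you work directly with the single event $\cA\cap\{\cN\leq M\}$ and argue mesoscopic alignment persists for all $t$, whereas the paper splits this event into $\cB=\{\mathfrak{t}\leq\tau_B\}\cap\{\cN\leq M\}$ and $\overbar{\cB}=\{\tau_B\leq\mathfrak{t}\}\cap\{\cN\leq M\}$ and treats the pre- and post-merging regimes separately; your version is a mild streamlining, and the careful bookkeeping you flag at the end (alignment persists through non-flip attempts and beyond $\mathfrak{t}$ because aligned sites remain aligned and $M_t$ stops incrementing once merged) is exactly what justifies it.
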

\begin{proof}
  We are going to use the above coupling construction with involved para\-meters $T = \infty$ and $M = s N$.  For that purpose, consider the following additional event:
  \begin{align*}
    \overbar{\cB}
    \;\ldef\;
    \set[\big]{\tau_B \leq \mathfrak{t}} \cap \set{\cN \leq M}.
  \end{align*}
  Notice that by Lemma~\ref{CW:lem:AB_event}, on the event $\cA \cap \cB$, we have $\sigma(\mathfrak{t}) = \varsigma(\mathfrak{t})$ and, in particular, $\tau^{\sigma}_B = \tau^{\varsigma}_B$.  Moreover, on the event $\cA \cap \cB \cap \set{\tau_B^{\varsigma} < \tau_A^{\varsigma}}$, it follows that $\tau_A^{\varsigma} = \tau_A^{\sigma}$.

  On the event $\cA \cap \overbar{\cB}$, the process $(\varsigma(t) : t \geq 0)$ reaches $B$ before time $\mathfrak{t}$.  However, by the coupling construction, we have that $\order(\sigma(t)) = \order(\varsigma(t))$ for all $t \leq \tau_B$. Since, by assumption, the sets $A, B$ are preimages of the mesoscopic sets $\bfA, \bfB$,  we conclude $\tau_B^{\sigma} = \tau_B^{\varsigma}$ and on the event $\set{\tau_B^{\varsigma} < \tau_A^{\varsigma}}$ the $\sigma$-chain can not reach $A$ before time $\tau_B^{\sigma}$.  Thus, we have
  \begin{align*}
    \prob_{\varsigma}\pra[\big]{ \tau_B < \tau_A}
    &\;\geq\; 
    \prob_{\varsigma,\sigma}\pra[\big]{
      \tau_B^{\varsigma}\, < \tau_A^{\varsigma}\,, \cA \cap \cB
    }
    \,+\, 
    \prob_{\varsigma,\sigma}\pra[\big]{
      \tau_B^{\varsigma}\, < \tau_A^{\varsigma}\,, \cA \cap \overbar{\cB}
    }
    \\[.5ex]
    &\;=\; 
    \prob_{\varsigma, \sigma}\pra[\big]{
      \tau_B^{\sigma} < \tau_A^{\sigma}, \cA \cap \cB
    } 
    \,+\, 
    \prob_{\varsigma, \sigma}\pra[\big]{ 
      \tau_B^{\sigma} < \tau_A^{\sigma}, \cA \cap \overbar{\cB}
    } 
    \\[.5ex]
    &\;\geq\; 
    \prob_{\varsigma, \sigma}[\cA]\, 
    \bra[\Big]{ 
      \prob_{\sigma}\pra[\big]{\tau_B < \tau_A} 
      \,-\,
      \prob_{\sigma}\pra[\big]{\cN > M}
    },
  \end{align*}
  which concludes the statement thanks to the estimates~\eqref{CW:coupling:probA} and~\eqref{CW:est:probN}.
\end{proof}
We are now in the position to apply the above lemma to the metastable situation of Proposition~\ref{CW:prop:varLEBD} and use the connection of hitting probabilities and the last exit biased distribution in~\eqref{eq:equi:measure}.
\begin{proof}[Proof of Proposition~\ref{CW:prop:varLEBD}]
  For an arbitrary $n \in N$ choose $\set{\bfa}, \bfB \subset \Gamma^n$ disjoint and set $A \ldef \order^{-1}(\bfa)$ and $B \ldef \order^{-1}(\bfB)$.  Then, Lemma~\ref{CW:lem:HitProb} implies that
  \begin{align*}
    e_{A,B}(\sigma)
    \overset{\eqref{eq:equi:measure}}{\;=\;\phantom{\leq}\mspace{-24mu}}
    \prob_{\sigma}\pra*{ \tau_B < \tau_A}
    \overset{\eqref{CW:est:HitProb}}{\;\leq\;}
    \me^{4\beta\, \eps(n) \, s \, N}\,
    \bra*{ \frac{\capacity(A,B)}{\mu[A]} \,+\, \me^{-I_{\alpha}^{\nBer}(s-1) N}},
    \qquad \forall\, \sigma \in A.
  \end{align*}
  Hence, in view of \eqref{eq:def:LastExitBiasedDistri} we obtain
  \begin{align*}
    \var_{\mu_A}\pra*{\frac{\nu_{A,B}}{\mu_A}}
    &\;=\;
    \frac{\mu[A]}{\capacity(A,B)}\, \Mean_{\nu_{A,B}}\pra*{e_{A,B}} - 1
    \nonumber\\[.5ex]
    &\;\leq\;
    \frac{\mu[A]}{\capacity(A,B)}\,
    \bra*{
      \bra*{\me^{4\beta \eps(n) s N} - 1}\, \frac{\capacity(A,B)}{\mu[A]}
      \,+\, 
      \me^{(4 \beta \eps(n) s -I_{\alpha}^{\nBer}(s-1)) N}
    }.
  \end{align*}
  In particular, under the assumptions of Proposition~\ref{CW:cor:rho:metastable}, we conclude from the estimate above that
  \begin{align*}
    &\bra*{\me^{4\beta \eps(n) s N} - 1}\, \frac{\capacity(M_1,M_2)}{\mu[M_2]}
    \,+\, 
    \me^{(4 \beta \eps(n) s -I_{\alpha}^{\nBer}(s-1)) N}
    \\[.5ex]
    &\mspace{144mu}\leq\;
    \me^{4\beta \eps(n) I_{\alpha}^{\nBer}(s-1) N}\, 
    \bra*{\me^{-c_1 \beta N} + \me^{-I_{\alpha}^{\nBer}(s-1) N}}.
  \end{align*}
  We have to show that the right-hand side is smaller $e^{-c_2 \beta N}$ as state in~\eqref{CW:eq:regularity}.  By exploiting the explicit definition of the rate function $I_{\alpha}^{\nBer}$ in~\eqref{CW:def:c2}, we can choose $s$ large enough such that $I_{\alpha}^{\nBer}(s-1) \geq \beta \, c_1$ with $c_1$ as in Proposition~\ref{CW:cor:rho:metastable}. Then, for any $c_2 \in (0,c_1)$, we find $n_1=n_1(c_1,c_2,h_\infty)$ such that for all $n>n_1$, it follows $4\eps(n) s = 8 h_\infty s /n < c_1-c_2$, and hence $\eta = e^{-c_2 \beta N}$ as stated in~\eqref{CW:eq:regularity}.
\end{proof}

\subsection{Local mixing estimates within metastable sets}
%
%
%
%
For the proof of Proposition~\ref{CW:prop:meta_sets:PI_LSI}, we follow \cite{MZ03} to compare the Poincar\'e constant $C_{\PI,i}$ in~\eqref{eq:ass:PI:metastable:sets} and logarithmic Sobolev constant $C_{\LSI,i}$ in~\eqref{eq:ass:LSI:metastable:sets} for any $M\in \cM$ with the ones of the Bernoulli--Laplace model.  First, we compare the variance and entropy.  Then we introduce the Bernoulli--Laplace model, for which we provide its Poincar\'e and logarithmic Sobolev constant from the literature.  Finally, by comparing the different Dirichlet forms we deduce a Poincar\'e and logarithmic Sobolev constant inside the metastable sets.
\smallskip

\noindent\textit{Step 1: Comparison of variance and entropy.} We compare the variance and entropy with respect to $\mu_{M}$ with the ones with respect to $\bar\mu_{M}$. Note that, by definition, $\bar{\mu}_{M}$ is the uniform measure on $M$. For the comparison of the variance, we use the two-sided comparison
\begin{align}\label{CW:H:Hbar}
  \overbar H(\sigma) - \eps N \leq H(\sigma) \leq \overbar H(\sigma) + \eps N
\end{align}
and obtain
\begin{align}\label{CW:metaPI:p1}
  \var_{\mu_{M}}[f] 
  \;=\;
  \inf_{a \in \R}\, \Mean_{\mu_M}\pra*{\bra*{f - a}^2}
  \;\leq\; 
  \me^{\beta \eps(n) N} \var_{\bar{\mu}_{M}}[f].
\end{align}
Similarly, for the entropy we use the fact that $b \log b - b \log a - b  + a \geq 0$ for any $a, b > 0$.  Then, by following essentially the same argument as given in~\cite{HS87},
\begin{align*}
  \Ent_{\mu_{M}}\pra*{f^2} 
  \;=\;
  \inf_{a>0}\, \Mean_{\mu_M}\pra*{ f^2 \log f^2 - f^2 \log a -f^2 +a }
  \;\leq\; 
  \me^{\beta \eps(n) N} \Ent_{\bar{\mu}_M}\pra*{f^2} .
\end{align*}
\smallskip

\noindent\textit{Step 2: Poincar\'e and logarithmic Sobolev constant of the Bernoulli--Laplace model.}
In the sequel, we introduce a dynamics on $M$.  For that purpose, denote by $\sigma^{j,k}$ the spin-exchange configuration, that is, $\sigma^{j,k}_i \ldef \sigma_i$ for $i\not\in\set{j,k}$ and $\sigma^{j,k}_j \ldef \sigma_k$ as well as $\sigma^{j,k}_k \ldef \sigma_j$.  Then, since $M=\order^{-1}(\bfm)$, we have for $\sigma \in M$, that $\sigma^{j,k}\in M$ if and only if $j,k\in \Lambda_\ell$ for some $\ell\in \set{1,\dots,n}$. Hence, for $\sigma,\sigma'\in M$ with $\abs{\sigma-\sigma'}_1 = 4$, we find $\ell\in \set{1,\dots,n}$ and $j,k\in \Lambda_{\ell}$ such that $\sigma'= \sigma^{j,k}$. Let us denote the according mesoscopic index by $\ell(\sigma,\sigma')$.  Therewith, we define the transition probabilities $\bra{\overbar{p}_{\BL}(\sigma, \sigma') : \sigma, \sigma' \in M}$ by
\begin{align*}
  \overbar{p}_{\BL}(\sigma, \sigma')
  \;\ldef\;
  \begin{cases}
    \frac{1}{\abs{\Lambda_{\ell(\sigma,\sigma')}}}, & \abs{\sigma-\sigma'}_1 = 4 \\
    0, &|\sigma-\sigma'|_1 > 4
    \\
    1-\sum\nolimits_{\eta \in M \setminus \sigma} \overbar{p}_{BL}(\sigma,\eta),
    &\sigma = \sigma'
  \end{cases} .
\end{align*}
Note that $\overbar{p}_{\BL}$ is reversible with respect to the uniform distribution $\bar{\mu}_M$.  Since $\abs{M} = \prod_{\ell=1}^n \binom{\abs{\Lambda_\ell}}{k_\ell}$ with $k_\ell = \bfm_\ell N$ and $\order^{-1}(\bfm) = M$, $\bar{\mu}_M$ is a product measure.  Moreover, the transition probabilities, $\overbar{p}_{\BL}$, are compatible with the tensorization, since any jump only occurs among two coordinates in $\Lambda_\ell$ for some $\ell \in \set{1, \dots, n}$.  Hence, if we regard the coordinates of $\sigma$ such that $\sigma_i=+1$ as particle position, then the Markov chain induced by $\overbar{p}_{\BL}$ is an exclusion process of particles in $n$ boxes of size $\set{\Lambda_\ell}_{\ell=1}^n$ such that in each box $\ell \in \set{1,\dots,n}$ the particle number is $k_\ell$.  This is the product of $n$ Bernoulli--Laplace models.  Both the spectral gap and logarithmic Sobolev constant are well known; cf.\ \cite{DS87} and \cite[Theorem 5]{LY98}.  Let us denote by $\overbar\cE_{\BL}$ the Dirichlet form corresponding to $(\bar{\mu}_M, \overbar{p}_{\BL})$. Then, by the tensorization property of the Poincar\'e and logarithmic Sobolev constant (see \cite[Lemma 3.2]{DSC96}), we obtain
\begin{align*}
  \var_{\bar{\mu}_M}[f] 
  &\;\leq\; 
  \max_{\ell \in \set{1, \dots, n}} 
  \set[\Big]{ C_{\PI,\BL(\abs{\Lambda_\ell}, k_\ell)} }\, 
  \overbar{\cE}_{\BL}(f) 
  \\
  \Ent_{\bar{\mu}_M}\pra*{f^2} 
  &\;\leq\; 
  \max_{\ell \in \set{1, \dots, n}} 
  \set[\Big]{ C_{\LSI,\BL(\abs{\Lambda_\ell},k_\ell)} }\,
  \overbar\cE_{\BL}(f),
\end{align*}
where for some universal constant $c_{\BL}>0$,
\begin{align}\label{CW:metaPI:p2}
  C_{\PI,\BL(\abs{\Lambda_\ell},k_\ell)} 
  &\;\ldef\; 
  \bra*{\frac{\abs{\Lambda_\ell}}{k_{\ell} \bra*{\abs{\Lambda_\ell} - k_\ell}}}^{-1}
  \;\leq\; 
  \frac{N}{4}
  \\
  C_{\LSI,\BL(\abs{\Lambda_\ell},k_\ell)} 
  &\;\ldef\; 
  C_{\PI,\BL(\abs{\Lambda_\ell},k_\ell)}\,
  \bra*{
    c_{\BL} \log 
    \frac{\abs{\Lambda_\ell}^2}{ k_\ell \bra*{\abs{\Lambda_\ell}- k_\ell}}
  }^{-1}
  \;\leq\; 
  \frac{N}{8 \log 2 \; c_{\BL}}.
  \nonumber
\end{align}

\noindent\textit{Step 3: Comparison of Dirichlet forms.}  Note that, for $\sigma, \sigma' \in M$, the transition probabilities $\overbar{p}_{\BL}(\sigma,\sigma')$ are not absolutely continuous with respect to $p(\sigma,\sigma')$.  For this reason, consider the auxiliary Dirichlet form $\cE_2$ associated to the two-step transition probabilities $\bra{p_2(\sigma, \sigma'): \sigma, \sigma' \in \cS}$ given by
\begin{align*}
  p_2(\sigma,\sigma')
  \;\ldef\;
  \sum_{\sigma''} p(\sigma, \sigma'')\, p(\sigma'', \sigma') .
\end{align*}
Since $\mu$ is reversible with respect to $p$, $\mu$ is also reversible with respect to $p_2$.  In addition, $p$ and $p_2$ have the same eigenvectors $\varphi_j$ and if we denote by $-1 \leq \lambda_j \leq 1$ the according eigenvalue of $p$, then the $j$th eigenvalue of $p_2$ is $\lambda_j^2$. Hence,
\begin{align}\label{CW:metaPI:p4}
  \cE_{2}(f)
  \;=\; 
  \sum_{j=1}^{\abs{\cS_n}}\, \bra{1 - \lambda_{j}^2}
  \abs[\big]{\skp{f, \varphi_j}_{\mu}}^2 
  \;\leq\; 
  2 \sum_{j=1}^{\abs{\cS_n}}\, \bra{1 - \lambda_{j}}
  \abs[\big]{\skp{f,\varphi_j}_{\mu}}^2 
  \;=\; 
  2 \cE(f) . 
\end{align}
Thus, it suffices to compare the Dirichlet forms $\cE_{\BL}$ and $\cE_2$.  For that purpose, we are left with establishing a bound on the ratio of the rates $\bar{\mu}_M(\sigma) \overbar{p}_{\BL}(\sigma,\sigma')$ and $\mu(\sigma) p_2(\sigma,\sigma')$ for $\sigma, \sigma' \in M$. For $\sigma,\sigma'\in M$ with $\abs{\sigma- \sigma'}_1 = 4$, we find $\sigma''$ such that $\abs{\sigma-\sigma''}_1=2$ and $\abs{\sigma'-\sigma''}_1=2$, which allows us to obtain a lower bound using the explicit representation of the Hamiltonian~\eqref{CW:Hamiltonian} as well as the boundedness of the external field~\eqref{CW:eq:ass:h}
\begin{align*}
  p_2(\sigma,\sigma') 
  \;\geq\; 
  p(\sigma,\sigma'')\, p(\sigma'',\sigma') 
  \;\geq\; 
  \frac{1}{N^2} \exp\bra[\big]{- 4 \beta \bra{1+h_\infty}} .
\end{align*}
Hence, the bound~\eqref{CW:H:Hbar} and the trivial estimate $\abs{\Lambda_\ell}\geq 1$ leads to
\begin{align}\label{CW:metaPI:p3}
  \frac{\bar{\mu}_M(\sigma) \overbar p_{\BL}(\sigma,\sigma')}
  {\mu(\sigma) p_2(\sigma,\sigma')} 
  &\;\leq\; 
  N^2 \exp\bra[\big]{\beta\bra{\eps(n) N + 4 + 4 h_\infty}} ,
\end{align}
which results in a comparison of the Dirichlet form $\overbar \cE_{\BL}$ and $\cE_2$ with the same constant.
\begin{prop}\label{CW:prop:meta_sets:PI_LSI}
  Assumption~\ref{ass:PI+LSI:metastable:sets} holds with constants $C_{\PI,\cM}$ and $C_{\LSI,\cM}$ satisfying
  \begin{align*}
    \max\set[\Big]{C_{\PI,\cM}, 2\log 2 c_{\BL} C_{\LSI,\cM}} 
    \;\leq\; 
    \frac{N^3}{2}\, \exp\bra[\big]{2\beta \bra*{\eps(n) N + 2 + 2h_\infty}}
  \end{align*}
  for some universal $c_{\BL}>0$.
\end{prop}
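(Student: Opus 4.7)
The strategy is to chain together the three comparison steps already established in the excerpt in order to bound $\var_{\mu_M}[f]$ and $\Ent_{\mu_M}[f^2]$ by a multiple of the Dirichlet form $\cE(f)$. Concretely, for any $M = \order^{-1}(\bfm) \in \cM$ and $f \in \ell^2(\mu)$, I would combine the variance comparison~\eqref{CW:metaPI:p1}, the Bernoulli-Laplace spectral inequalities with explicit constants in~\eqref{CW:metaPI:p2}, the rate-comparison bound~\eqref{CW:metaPI:p3}, and finally the spectral identity~\eqref{CW:metaPI:p4} that relates $\cE_2$ to $\cE$. Schematically,
\begin{align*}
  \var_{\mu_M}[f]
  \;\leq\; \me^{\beta \eps(n) N}\, \var_{\pi_M}[f]
  \;\leq\; \me^{\beta \eps(n) N}\, \tfrac{N}{4}\, \overbar\cE_{\BL}(f)
  \;\leq\; \me^{\beta \eps(n) N}\, \tfrac{N}{4}\, C_{\text{rate}}\, \cE_2(f)
  \;\leq\; \me^{\beta \eps(n) N}\, \tfrac{N}{4}\, C_{\text{rate}}\, 2\cE(f),
\end{align*}
where $C_{\text{rate}} \ldef N^2 \exp\bra[\big]{\beta\bra{\eps(n) N + 4 + 4 h_\infty}}$ is the upper bound obtained in~\eqref{CW:metaPI:p3} for the ratio $\pi_M(\sigma) \overbar p_{\BL}(\sigma,\sigma')/\mu(\sigma) p_2(\sigma,\sigma')$.

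The analogous chain for the entropy uses the entropy comparison derived at the start of Step~1 together with the Bernoulli-Laplace logarithmic Sobolev bound $C_{\LSI,\BL(|\Lambda_\ell|,k_\ell)} \leq N/(8\log 2 \, c_{\BL})$ from~\eqref{CW:metaPI:p2}. Collecting the factors gives, for every $M \in \cM$,
\begin{align*}
  C_{\PI,M} \;\leq\; \tfrac{N}{2}\, C_{\text{rate}}\, \me^{\beta \eps(n) N}
  \qquad \text{and} \qquad
  C_{\LSI,M} \;\leq\; \tfrac{N}{4 \log 2 \, c_{\BL}}\, C_{\text{rate}}\, \me^{\beta \eps(n) N}.
\end{align*}
Plugging in $C_{\text{rate}}$ and bounding $\me^{4\beta(1+h_\infty)} \leq \me^{2\beta(2+2h_\infty)}$ yields
\begin{align*}
  \max\set[\big]{C_{\PI,M},\, 2 \log 2 \, c_{\BL}\, C_{\LSI,M}}
  \;\leq\; \tfrac{N^3}{2}\, \exp\bra[\big]{2\beta (\eps(n) N + 2 + 2h_\infty)}.
\end{align*}
Since the right-hand side is independent of the particular minimum $\bfm$, and since $C_{\PI,\cM}$ and $C_{\LSI,\cM}$ are defined as weighted averages (with a $\max\{1, \cdot\}$ that is immediately absorbed by the right-hand side), the same bound transfers verbatim to $C_{\PI,\cM}$ and $C_{\LSI,\cM}$.

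Essentially all of the real work has been done in Steps~1--3 of the excerpt; the remaining task for the proposition itself is just bookkeeping. The one subtlety I want to verify carefully is the transition from the two-step Dirichlet form $\cE_2$ to the one-step form $\cE$ via the spectral argument~\eqref{CW:metaPI:p4}, which is valid because $\mu$ is reversible and the relevant eigenvalues $\lambda_j \in [-1,1]$ satisfy $1 - \lambda_j^2 \leq 2(1-\lambda_j)$. This is the only step that is not a pointwise comparison and thus the only one that could in principle leak information, so it is where I would double-check that the constant factor of $2$ is the right multiplicative price. The bound~\eqref{CW:metaPI:p3} is proven by exhibiting an intermediate configuration $\sigma''$ at Hamming distance two from both $\sigma$ and $\sigma'$, which exists precisely because $\abs{\sigma-\sigma'}_1 = 4$ under $\overbar p_{\BL}$; thereafter the uniform lower bound on single-flip rates from the boundedness assumption~\eqref{CW:eq:ass:h} gives the claimed estimate.
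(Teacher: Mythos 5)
Your proposal is correct and takes essentially the same route as the paper: you chain together the comparisons \eqref{CW:metaPI:p1}--\eqref{CW:metaPI:p4} in the same order, obtain the same intermediate constants, and conclude by noting that $C_{\PI,\cM}$, $C_{\LSI,\cM}$ are convex combinations of the $M$-wise constants, each of which satisfies the same $M$-independent bound. The only cosmetic difference is that you factor out $C_{\text{rate}}$ as a named constant before multiplying through, whereas the paper writes the chain as a single display; the arithmetic is identical.
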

\begin{proof}[Proof Proposition~\ref{CW:prop:meta_sets:PI_LSI}]
  The conclusion follows by combining the chain of estimates for the variance
  \begin{align*}
    \var_{\mu_{M}}[f] 
    &\overset{\eqref{CW:metaPI:p1}}{\;\leq\;} 
    \me^{\beta \eps(n) N}\, \var_{\bar{\mu}_M}[f] 
    \overset{\eqref{CW:metaPI:p2}}{\;\leq\;} 
    \frac{N}{4}\, \me^{\beta \eps(n) N} \overbar\cE_{\BL}(f)
    \\
    &\overset{\eqref{CW:metaPI:p3}}{\;\leq\;} 
    \frac{N^3}{4}\, \me^{2\beta \bra*{\eps(n) N + 2 + 2h_\infty}}\, \cE_{2}(f) 
    \overset{\eqref{CW:metaPI:p4}}{\;\leq\;} 
    \frac{N^3}{2}\, \me^{2\beta \bra*{\eps(n) N + 2 + 2h_\infty}}\, \cE(f)
  \end{align*}
  and likewise for the entropy.  Notice that the final estimate on $C_{\PI,M}$ and $C_{\LSI,M}$ is independent of $M$.  Since the constants $C_{\PI,\cM}$ and $C_{\LSI,\cM}$ are convex combinations of $C_{\PI,M}$ and $C_{\LSI,M}$ for $M\in \cM$ and, the assertion follows.
\end{proof}

\appendix
\section{Young functions}
%
%
%
%
\begin{lem}[Properties of Young functions]\label{lem:YoungFunctions}
  A function $\Phi\!: [0,\infty) \to [0, \infty]$ is a Young function if it is convex, $\Phi(0)=\lim_{r\to 0} \Phi(r) =0$ and $\lim_{r \to \infty} \Phi(r)=\infty$. Then it holds that
  \begin{enumerate}[label=(\roman{enumi})]
  \item $\Phi$ is non-decreasing;
  \item its Legendre-Fenchel dual $\Psi\!: [0,\infty) \to [0,\infty]$ defined by
    \begin{align*}
      \Psi(r) \;\ldef\; \sup_{s\in [0,\infty]} \set*{ s r - \Phi(s)}
    \end{align*}
    is again a Young function;
  \item the (pseudo)-inverse of $\Phi$, defined by $\Phi^{-1}(t) \ldef \inf \big\{ s\in [0,\infty] : \Phi(s) > t \big\}$ is concave and non-decreasing.
  \end{enumerate}
\end{lem}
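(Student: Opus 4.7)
The plan is to handle the three parts in order, using only convexity, non-negativity, and the boundary behaviour of $\Phi$.

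For (i), I would exploit $\Phi(0)=0$ together with convexity: for $0 \leq r \leq s$ with $s>0$, write $r = (1-r/s)\cdot 0 + (r/s)\cdot s$ and apply Jensen to obtain $\Phi(r) \leq (r/s)\,\Phi(s)$. Since $\Phi\geq 0$ and $r/s \leq 1$, this yields $\Phi(r)\leq \Phi(s)$, as desired.

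For (ii), convexity of $\Psi$ is free, being the pointwise supremum of the affine functions $r\mapsto sr-\Phi(s)$. The identity $\Psi(0) = -\inf_{s\geq 0}\Phi(s) = 0$ follows from $\Phi\geq 0$ and $\Phi(0)=0$. For $\lim_{r\to 0^+}\Psi(r)=0$, I would first note that $s\mapsto \Phi(s)/s$ is non-decreasing on $(0,\infty)$ by convexity with $\Phi(0)=0$, so its limit $L\in[0,\infty]$ exists. A quick argument rules out $L=0$: it would force $\Phi\equiv 0$, contradicting $\Phi(s)\to\infty$. Hence $L>0$, and for any $r<L$ the map $s\mapsto sr-\Phi(s)$ diverges to $-\infty$ as $s\to\infty$, so $\Psi(r)<\infty$ on a right-neighbourhood of $0$. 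Combined with $\Psi(0)=0$, convexity of $\Psi$ on this neighbourhood forces $\Psi(r)\to 0$ as $r\to 0^+$. For $\lim_{r\to\infty}\Psi(r)=\infty$, one uses the trivial lower bound $\Psi(r)\geq s_0 r - \Phi(s_0)$ with any fixed $s_0>0$ such that $\Phi(s_0)<\infty$, which tends to $+\infty$.

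For (iii), non-decreasingness of $\Phi^{-1}$ is immediate: if $t_1\leq t_2$, then $\{s:\Phi(s)>t_2\}\subset\{s:\Phi(s)>t_1\}$ and the infima compare accordingly. For concavity, I would first rewrite $\Phi^{-1}(t)=\sup\{s:\Phi(s)\leq t\}$, which equals the original definition because $\Phi$ is non-decreasing by~(i). Fix $t_1,t_2\geq 0$, $\lambda\in[0,1]$, set $t\ldef (1-\lambda)t_1+\lambda t_2$, and given $\eps>0$ choose $s_i$ with $\Phi(s_i)\leq t_i$ and $s_i>\Phi^{-1}(t_i)-\eps$. Convexity of $\Phi$ gives
\begin{align*}
  \Phi\bra{(1-\lambda)s_1+\lambda s_2}
  \;\leq\;
  (1-\lambda)\Phi(s_1)+\lambda\Phi(s_2)
  \;\leq\;
  t,
\end{align*}
so $(1-\lambda)s_1+\lambda s_2$ lies in the sup-defining set for $\Phi^{-1}(t)$, giving $\Phi^{-1}(t)\geq (1-\lambda)\Phi^{-1}(t_1)+\lambda\Phi^{-1}(t_2)-\eps$, and letting $\eps\to 0$ finishes the argument.

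The main obstacle I anticipate is the finiteness argument in (ii), which is the only place the hypothesis $\Phi(s)\to\infty$ does real work: one must prevent $\Psi$ from jumping from $0$ at the origin to $+\infty$ on every $(0,\delta)$, and the slope-monotonicity of $\Phi(s)/s$ is what rules this out. Everything else is a routine application of convexity and monotonicity.
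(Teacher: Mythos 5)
Your proof is correct, and for parts (i) and (ii) it follows essentially the same route as the paper: (i) is the standard convexity-plus-$\Phi(0)=0$ argument, and in (ii) you establish the required asymptotic linear lower bound on $\Phi$ (phrased via monotonicity of $s\mapsto\Phi(s)/s$ rather than the paper's ``$\Phi(r)\geq\kappa r$ for $r\geq R$'') before concluding that $\Psi$ is finite near the origin and therefore continuous there by convexity; the growth at infinity is the same elementary bound $\Psi(r)\geq s_0 r-\Phi(s_0)$.

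For (iii) you take a genuinely different and in fact cleaner route. The paper restricts to $u,v$ in the range of $\Phi$, writes $\Phi(\alpha\Phi^{-1}(u)+\beta\Phi^{-1}(v))\leq\alpha u+\beta v$ via the identity $\Phi(\Phi^{-1}(u))=u$, applies $\Phi^{-1}$ to both sides using $\Phi^{-1}(\Phi(x))\geq x$, and then has to argue separately that the extension of $\Phi^{-1}$ off the range of $\Phi$ is still concave. This requires a continuity claim (``$\Phi$ is continuous on its finite support'') that is slightly delicate when the effective domain of $\Phi$ has a finite right endpoint $b$ with a jump there (e.g.\ $\Phi\equiv 0$ on $[0,1)$, $\Phi(1)=1$, $\Phi=\infty$ on $(1,\infty)$, where $\Phi(\Phi^{-1}(0))=1\neq 0$), and it needs the extra step at the end. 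Your argument sidesteps all of this: rewriting $\Phi^{-1}(t)=\sup\{s:\Phi(s)\leq t\}$ (equivalent to the $\inf$ definition once monotonicity from (i) is in hand), you pick near-extremal $s_i$ in the sublevel sets and observe that their convex combination stays in the sublevel set at level $(1-\lambda)t_1+\lambda t_2$ by convexity of $\Phi$, giving concavity directly for all $t_1,t_2\in[0,\infty)$. This is shorter, avoids the $\Phi\circ\Phi^{-1}$ identity entirely, and does not need the ``finite range'' patch at the end.
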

\begin{proof}
  (i). By convexity, it holds for any $\alpha \in (0,1)$ that
  \begin{align*}
    \Phi(\alpha t) 
    \;=\ 
    \Phi\bra*{\alpha t + (1-\alpha) \cdot 0} 
    \;\leq\; 
    \alpha\, \Phi(t) + (1-\alpha\, \Phi(0)
    \;=\;
    \alpha\, \Phi(t).
  \end{align*}
  Hence, by using additionally the non-negativity of $\Phi$ it follows for any $\alpha \in (0,1)$
  \begin{align*}
    \Phi(t) 
    \;\geq\; 
    \frac{1}{\alpha}\, \Phi(\alpha t) 
    \;\geq\; 
    \Phi(\alpha t) .
  \end{align*}

  (ii).  The convexity of $\Psi$ follows by convex duality for Legendre-Fenchel transform, since $\Phi$ is a convex function.  Since $\Phi(s)\geq 0$ for all $s$ and at least equality for $s=0$, it first follows $\Psi(r)\geq 0$ for all $r$ and in particular
  \begin{align*}
    \Psi(0) \;=\; \sup_{s\in [0,\infty]} \set*{- \Phi(s)} \;=\; 0 .
  \end{align*}
  Now, from $\lim_{r\to \infty} \Phi(r)=\infty$ and the convexity of $\Phi$ follows that, there exists $\kappa> 0$ such that $\Phi(r)\geq \kappa r$ for $r\geq R$. Hence, we get
  \begin{align*}
    \lim_{r\to 0 } \sup_{s\in [0,\infty]} \set*{ sr - \Phi(s)} 
    \;\leq\; 
    \lim_{r\to 0 } \max\set*{ \sup_{ s\in [0,R]} sr, \sup_{s \geq R } \set*{s( r- \kappa)}}
    \;=\; 
    0
  \end{align*}
  Similarly, since $\lim_{r \to \infty} \Phi(r) = 0$, it follows that $\Phi(r) \leq \eps < \infty$ for all $r\in [0,\delta]$, and hence
  \begin{align*}
    \lim_{r\to \infty} \sup_{s \in [0,\infty]}\set*{ sr -\Phi(s)} 
    \;\geq\; 
    \lim_{r\to \infty} \bra*{ \delta r - \eps} 
    \;=\; 
    \infty.
  \end{align*}

  (iii). The fact, that $\Phi^{-1}$ is non-decreasing follows immediately from its definition and that $\Phi$ is non-decreasing. Now, let $u,v \in \set*{ \Phi(s) : s \in \R, \Phi(s) < \infty}$. Then, by convexity of $\Phi$ follows for $\alpha \in (0,1)$ and $\beta = 1-\alpha$,
  \begin{align*}
    \Phi\bra*{\alpha \Phi^{-1}(u) + \beta \Phi^{-1}(v)} 
    \;\leq\; 
    \alpha \Phi\bra*{\Phi^{-1}(u)} + \beta \Phi\bra*{\Phi^{-1}(v)} 
    \;=\; 
    \alpha u + \beta v ,
  \end{align*}
  where we used that $\Phi$ is continuous on its finite support, since it is convex.  Since $\Phi^{-1}$ is non-decreasing, the inequality is preserved after applying it
  \begin{align*}
    \Phi^{-1}\bra*{\Phi\bra*{\alpha \Phi^{-1}(u) + \beta \Phi^{-1}(v)}} 
    \;\leq\; 
    \Phi^{-1} \bra*{\alpha u + \beta v} .
  \end{align*}
  Now by noting
  \begin{align*}
    \Phi^{-1}\bra*{\Phi(x)} \;=\; \inf\set*{ s: \Phi(s) > \Phi(x)} \geq x ,
  \end{align*}
  if follows that $\Phi^{-1}$ is concave on the finite range of $\Phi$. If this range is finite, then $\Phi^{-1}$ gets extended continuously as a constant and hence still concave.
\end{proof}

\subsubsection*{Acknowledgement}
The authors wish to thank Georg Menz and Anton Bovier for many fruitful discussions about metastability and related topics. A.S.\ and M.S.\ thank the FOR~718 \emph{Analysis and Stochastics in Complex Physical Systems} for fostering this collaboration.  A.S.\ acknowledges support through the CRC~1060 \emph{The Mathematics of Emergent Effects} at the University of Bonn. The authors thank the anonymous referees for the much valuable suggestions for improving the presentation of this article. 

\bibliographystyle{abbrv}
\bibliography{literature}
\end{document}